\def\@tocline#1#2#3#4#5#6#7{\relax
	\ifnum #1>\c@tocdepth % then omitf
	\else 
	\par \addpenalty\@secpenalty\addvspace{#2}% 
	\begingroup \hyphenpenalty\@M
	\@ifempty{#4}{%
		\@tempdima\csname r@tocindent\number#1\endcsname\relax
	}{%
		\@tempdima#4\relax
	}%
	\parindent\z@ \leftskip#3\relax \advance\leftskip\@tempdima\relax
	\rightskip\@pnumwidth plus4em \parfillskip-\@pnumwidth
	#5\leavevmode\hskip-\@tempdima #6\nobreak\relax
	\ifnum#1<0\hfill\else\dotfill\fi\hbox to\@pnumwidth{\@tocpagenum{#7}}\par
	\nobreak
	\endgroup
	\fi}
\let\oldtocsection=\tocsection
\let\oldtocsubsection=\tocsubsection
\let\oldtocsubsubsection=\tocsubsubsection
\renewcommand{\tocsection}[2]{\hspace{0em}\oldtocsection{#1}{#2}}
\renewcommand{\tocsubsection}[2]{\hspace{1em}\oldtocsubsection{#1}{#2}}
\renewcommand{\tocsubsubsection}[2]{\hspace{2em}\oldtocsubsubsection{#1}{#2}}
\newcommand{\tc}[2]{\textcolor{#1}{#2}}
\definecolor{cerulean}{rgb}{0,.48,.65} 
\definecolor{magenta}{rgb}{.5,0,.5} 
\definecolor{dred}{rgb}{.5,0,0} 
\definecolor{green}{rgb}{0,.5,0} 
\definecolor{blue}{rgb}{0,0,1} \newcommand{\blue}[1]{\tc{blue}{#1}}
\definecolor{black}{rgb}{0,0,0} 
\definecolor{dgreen}{rgb}{0,.3,0} 
\definecolor{vdred}{rgb}{.3,0,0} 
\definecolor{red}{rgb}{1,0,0} \newcommand{\red}[1]{\tc{red}{#1}}
\definecolor{salmon}{rgb}{0.98,0.50,0.45} 
\definecolor{gray}{rgb}{.5,.5,.5} 
\definecolor{seagreen}{rgb}{0.13,0.70,0.67} 
\definecolor{chartreuse}{rgb}{0.40,0.80,0.00}
\definecolor{cornflower}{rgb}{0.39,0.58,0.93} 
\definecolor{gold}{rgb}{0.80,0.68,0.00}
\theoremstyle{plain}
\newtheorem{theorem}{Theorem}
\newtheorem{thm}{Theorem}[section]
\newtheorem{lemma}[thm]{Lemma}
\newtheorem{cor}[thm]{Corollary}
\newtheorem{prop}[thm]{Proposition}
\newtheorem{proposition}[thm]{Proposition}
\newtheorem*{conjproblem*}{The conjugacy problem for $H$}
\newtheorem*{0twistedproblem*}{The 0-twisted-conjugacy problem}
\newtheorem*{Htwistedproblem*}{The $\H$-twisted conjugacy problem}
\newtheorem*{Itwistedproblem*}{The I-twisted conjugacy problem}
\theoremstyle{definition}
\newtheorem{remark}[thm]{Remark}
\newtheorem{Example}[thm]{Example}
\newtheorem{Open questions}[thm]{Open questions}
\newtheorem{Open question}[thm]{Open question}
\newtheorem{Open problems}[thm]{Open problems}
\newtheorem{Open problem}[thm]{Open problem}
\def\cal{\mathcal}
\def\Bbb{\mathbb}
\def\bar{\overline}
\def\H{\mathcal{H}}
\def\Z{\Bbb{Z}}
\def\N{\Bbb{N}}
\def\ni{\noindent}
\def\rank{\hbox{\rm rank}}
\def\wt{\hbox{\rm wt}}
\def\CL{\hbox{\rm CL}}
\def\F+L{\hbox{$\textup{F}\!_+\textup{L}$}}
\def\ssm{\smallsetminus}
\def\Aut{\hbox{\rm Aut}}
\def\onto{{\kern3pt\to\kern-8pt\to\kern3pt}}
\def\<{\langle}
\def\>{\rangle}
\def\|{{\ |\ }}
\def\a{\alpha}
\def\b{\beta}
\def\G{\Gamma}
\newcommand{\set}[1]{\left\{#1\right\}}
\newcommand{\abs}[1]{\left|#1\right|}
\renewcommand{\ni}{\noindent}
\def\*{^{\star}}
\newcommand{\pieces}[1]{{|\!|{#1}|\!|_\pi}} %{\abs{#1}_p}
\newcommand{\case}[1]{\medskip
	\noindent{\bf Case #1.} \nopagebreak}
\newcommand{\step}[1]{\medskip
	\noindent{\bf Step #1.} \nopagebreak}
\newcommand{\commentA}[1]{\marginpar{\tiny\begin{center}\textcolor{blue}{#1}\end{center}}}
\begin{document}

	\title{Conjugacy in a family of free-by-cyclic groups}
	
	\author{M.\ R.\ Bridson, T.\ R.\ Riley and A.\ W.\ Sale}
	
	\date{22 May 2025}

	\begin{abstract}
		\ni  We analyse the geometry and complexity 
		of the conjugacy problem in a family of free-by-cyclic groups 
		$H_m=F_m\rtimes\Z$ 
		where the defining free-group automorphism is positive and polynomially growing.  
		We prove that the conjugator length function of $H_m$ is linear, and
		describe  polynomial-time solutions to the conjugacy problem and conjugacy search problem in $H_m$. 
		
		\smallskip
		%\\
		\ni \footnotesize{\textbf{2020 Mathematics Subject Classification:  20F65, 20F10}}  \\ 
		\ni \footnotesize{\emph{Key words and phrases:} free-by-cyclic groups, conjugacy problem,  conjugator length}
	\end{abstract}
	
	\thanks{We gratefully acknowledge the financial support of the Royal Society (MRB), the Simons Foundation (TRR--Simons Collaboration Grant 318301), and the National Science Foundation (TRR--GCR2428489 and OIA-2428489). }

	\maketitle

\section{introduction}
 Suppose $G$ is a finitely generated group. The \emph{conjugacy problem}  asks  for an algorithm that, given any words $u$ and $v$ on the generators and their inverses, decides whether or not these words
  represent conjugate elements in $G$. We write $u\!\sim\!v$ to denote
 conjugacy.  The  \emph{conjugacy search problem} asks   for an algorithm that, given a pair of words $u$ and $v$ such that $u \!  \sim \! v$, will output a word $w$ with $uw=wv$ in $G$.  The \emph{conjugator length function} $\CL : \N \to \N$ quantifies these problems: $\CL(n)$ is the least integer  $N$ such that for all words $u$ and $v$ that  represent conjugate elements in $G$  and have length $|u|+|v|\le n$,  there is a word  $w$ of length at most $N$  such that  $uw=wv$ in $G$.  
The conjugator length functions of $G$ with respect to different finite generating sets are $\simeq$-equivalent, where $\simeq $ is the equivalence relation that identifies  
functions $\mathbb{N} \to \mathbb{N}$ that dominate each other modulo affine distortions of  their domain and their range. 
Extensive background on conjugator length can be found in \cite{BrRiSa}.

 Fix an integer $m \geq 1$ and let  $F = F(a_1, \ldots, a_m)$ be a rank-$m$  free group. 
	Define  $\varphi \in \Aut(F)$   by 
	$\varphi(a_i) = a_ia_{i-1}$  for $2\leq i \leq m$  and $\varphi(a_1)=a_1$.
	This paper concerns the  free-by-cyclic groups  $$ H_m  \ = \ F \rtimes_\varphi \Z \ = \  \langle a_1, \ldots, a_m , s \mid s^{-1}a_i s = \varphi(a_i) \rangle.$$
The   inclusions
$H_{m-1}\hookrightarrow H_m$ (excluding $a_m)$ and retractions $H_m\to H_{m-1}$ 
(killing $a_1$) will facilitate induction arguments.

The groups $H_m$ have many useful properties and have appeared regularly 
in the literature. They appear as
`hydra groups' in \cite{BaR1, DR, DER, Pueschel}. Each is the fundamental
group of a compact non-positively curved 2-complex built from squares
\cite{Samuelson};
in particular it is biautomatic and CAT$(0)$. 
Each can be expressed as a 2-generator
1-relator group, or as a free-by-cyclic
group $F_r\rtimes\Z$ with $r$ arbitrarily large \cite{Button07}. 
$H_2$ is famous as a
3-manifold group that is not subgroup separable \cite{BKS}.
But, most obviously, these groups $H_m$ serve as natural prototypes for the
mapping tori of free-group automorphisms that have maximal polynomial growth
\cite{BBMS, Bridson2, CM, Gersten12, Macura2, Macura4, Samuelson}, and 
for the most part this is how we shall regard them.

Our main results here are:

\begin{theorem} \label{CL of free-by-cyclic}
For all $m \geq 2$, the conjugator length function of $H_m$ satisfies $\CL(n) \simeq n$.
\end{theorem}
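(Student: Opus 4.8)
The statement comprises a lower bound $\CL(n)\succeq n$ and an upper bound $\CL(n)\preceq n$, which I would treat separately.

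\emph{The lower bound.} First record that $\Fix(\varphi)=\langle a_1\rangle$: since $\varphi$ is polynomially growing, $|\varphi^k(w)|\to\infty$ as $k\to\infty$ for every $w\in F\ssm\langle a_1\rangle$, so no such $w$ is fixed. Writing elements of $H_m$ in the normal form $s^k f$ with $f\in F$ and comparing $s^k f\cdot s$ with $s\cdot s^k f$ then gives $C_{H_m}(s)=\langle a_1,s\rangle\cong\Z^2$. Now fix $k\ge 1$ and put $u=s$, $v=a_m^{-k}sa_m^{k}$; these are conjugate in $H_m$ via $a_m^{k}$, and $|u|+|v|\asymp k$. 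Any $w$ with $w^{-1}sw=v$ satisfies $a_m^{k}w^{-1}\in C_{H_m}(s)$, so $w$ lies in the coset $\langle a_1,s\rangle\,a_m^{k}$. On the other hand the relations of $H_m$ force every homomorphism $H_m\to\Z$ to kill $a_1,\dots,a_{m-1}$, so there is a well-defined, $1$-Lipschitz homomorphism $\chi\colon H_m\to\Z$ with $\chi(a_m)=1$, $\chi(a_i)=0$ for $i<m$, and $\chi(s)=0$; it sends every element of $\langle a_1,s\rangle\,a_m^{k}$ to $k$. Hence $|w|\ge k$ for every conjugator $w$, and $\CL(n)\succeq n$.

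\emph{The upper bound: reduction.} Let $\rho\colon H_m\to\Z$ be the retraction with $\rho(F)=0$ and $\rho(s)=1$. If $u\sim v$ in $H_m$ with $|u|+|v|\le n$, then $\rho(u)=\rho(v)=:t$. If $t=0$ then $u,v\in F$; cyclically reducing in $F$ alters the pair by conjugators of length $\le n$, so we may assume $u=U$, $v=V$ are cyclically reduced in $F$, and then $U\sim_{H_m}V$ iff $\varphi^{j}(U)\sim_F V$ for some $j\in\Z$. If $U$ is conjugate into $\langle a_1\rangle$ this is elementary; otherwise the polynomial growth of $\varphi^{\pm1}$ bounds $|j|$ in terms of $n$, and one must produce a short conjugator realising $\varphi^{j}(U)\sim_F V$. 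If $t\ne0$, writing $u=s^t f$, $v=s^t g$ with $f,g\in F$, a direct computation shows that $s^i h$ conjugates $u$ to $v$ iff $\varphi^{i}(f)=\varphi^{t}(h)\,g\,h^{-1}$ in $F$; so after bounding $|i|$ linearly one is left with a $\varphi^{t}$-twisted conjugacy problem in $F$.

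\emph{The core, and the main obstacle.} Everything thus reduces to the assertion that if $x,y\in F$ are $\varphi^{t}$-twisted conjugate (resp.\ conjugate) then a twisted conjugator (resp.\ conjugator) of length $\preceq|x|+|y|$ exists, with implied constant independent of $t,x,y$. Ordinary conjugacy in $F$ is classical; the twisted case is where the explicit triangular shape of $\varphi$ and induction on $m$ enter, via $H_{m-1}\hookrightarrow H_m$ and the retraction $H_m\to H_{m-1}$: one analyses the cancellation in the positive words $\varphi^{\pm t}(a_i)$, peels off the $a_1$-layer (equivalently, passes to $H_{m-1}$) to invoke the inductive hypothesis, and reassembles. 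The main obstacle is keeping all of this linear. Because $\varphi$ grows polynomially of degree $m-1$, the naive estimates — on $|\varphi^{j}(U)|$, on the amount of cyclic reduction needed for $\varphi^{j}(U)$, on cancellation at the junctions of $\varphi^{\pm t}$-images, and on the twisted conjugator itself — are only polynomial in $n$. Extracting genuinely linear bounds requires exploiting that $\varphi$ is \emph{positive} and of this special unipotent form, so that the relevant word lengths behave additively as one moves up the chain $H_1\subset H_2\subset\cdots\subset H_m$ and no polynomial ``excess'' accumulates across the induction; establishing the precise normal-form and cancellation lemmas for the words $\varphi^{\pm t}(a_i)$ is the technical heart of the proof.
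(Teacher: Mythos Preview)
Your lower-bound argument is essentially correct, but the claim $\Fix(\varphi)=\langle a_1\rangle$ is false: by Lemma~\ref{lem:fixed pieces}, $\Fix(\varphi)=\langle a_1,\,a_2a_1a_2^{-1}\rangle$, so $C_{H_m}(s)\cong\Z\times F_2$, not $\Z^2$. Fortunately this does not break your argument, since your homomorphism $\chi$ kills $a_2a_1a_2^{-1}$ as well, hence still kills the full centraliser of $s$; so $\chi(w)=k$ and $|w|\ge k$ as claimed. (The paper does not spell out the lower bound.)

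For the upper bound, your reduction to twisted conjugacy in $F$ is the same as the paper's Section~\ref{reductions}, but what follows is only an outline, and you explicitly flag the hard part as unproved. Two genuine gaps deserve comment. First, the lengths that matter are $|\cdot|_H$, not $|\cdot|_F$; because $F$ is polynomially distorted in $H_m$, finding a conjugator $h\in F$ with $|h|_F$ controlled does \emph{not} yield a short conjugator in $H_m$, and conversely a ``short in $H$'' element of $F$ can be enormously long in $F$. The paper invests Section~\ref{distortion section} (Proposition~\ref{prop:fbc subword length}, Lemma~\ref{lem:piece bound}, Corollary~\ref{cor:rank i distortion}) in tools for passing between the two metrics; your sketch does not engage with this. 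Second, your proposed induction ``up the tower $H_1\subset\cdots\subset H_m$'' is close in spirit to the alternative iterated-HNN approach the paper sketches in Section~\ref{s:last}, and the paper explicitly notes there that this approach \emph{does not} by itself deliver the linear conjugator-length bound. The paper's actual proof is quite different: it introduces \emph{piece decompositions} (Section~\ref{defs and conventions section}), splits the $p\neq 0$ case into the I- and $\mathcal{H}$-configurations (Figure~\ref{fig:fbc determine X}), and for the $\mathcal{H}$-case proves a delicate structural result (Proposition~\ref{conjugator-like problem in F}) describing a ``chunky'' conjugator $X=LSMPR$. That structure gives only a quadratic bound because of the chunk $M_1=\pi\varphi^p(\pi)\cdots\varphi^{p(q-1)}(\pi)$; the decisive trick (Lemma~\ref{lem:swap for a linear conjugator}) is to replace $M_1$ by $s^{pq}$, stepping \emph{outside} $F$, to obtain a linearly bounded conjugator in $H_m$. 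None of this machinery---pieces, the I/$\mathcal{H}$ dichotomy, the preserved-prefix analysis of Section~\ref{prefixes section}, or the $M_1\mapsto s^{pq}$ trick---is visible in your outline, and your inductive hope that ``word lengths behave additively'' across the tower is precisely what fails without it.
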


\begin{theorem} \label{CP of free-by-cyclic}
For all $m\geq 2$, there exist algorithms solving the  conjugacy problem and the conjugacy search problem of $H_m$ in time polynomial in the sum of the lengths of the input words. 
\end{theorem}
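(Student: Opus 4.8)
The plan is to reduce conjugacy in $H_m$ to conjugacy and twisted conjugacy problems inside the free group $F$, keeping all search parameters bounded linearly in the input by Theorem~\ref{CL of free-by-cyclic}. First I would put each input into normal form $w\,s^{k}$ with $w\in F$ reduced and $k\in\Z$; this is a polynomial-time computation because $H_m$ is biautomatic and hence has a polynomial-time word problem. Since the retraction $H_m\to\Z$ sending $s\mapsto 1$ is $1$-Lipschitz, $w_{1}s^{k_{1}}$ and $w_{2}s^{k_{2}}$ cannot be conjugate unless $k_{1}=k_{2}$, so assume the inputs are $u=w_{1}s^{k}$ and $v=w_{2}s^{k}$. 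The computation underlying everything is that conjugating $u$ by $hs^{j}$ (with $h\in F$, $j\in\Z$) yields $h\,\varphi^{-j}(w_{1})\,\varphi^{-k}(h)^{-1}\,s^{k}$, so that $u\sim_{H_m}v$ holds exactly when, for some $j$, the element $\varphi^{-j}(w_{1})$ is conjugate in $F$ --- after a twist by $\varphi^{-k}$ when $k\ne 0$ --- to $w_{2}$.

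\emph{Case $k=0$.} Here $u,v\in F$ and $u\sim_{H_m}v$ if and only if $\varphi^{\,n}(u)\sim_{F}v$ for some $n\in\Z$, in which case $hs^{-n}$ is a conjugator, where $h\in F$ conjugates $\varphi^{\,n}(u)$ to $v$. By Theorem~\ref{CL of free-by-cyclic} there is a constant $C$ for which it suffices to search $n$ in the range $|n|\le C(|u|+|v|)$. For $n\ge 0$ the word $\varphi^{\,n}(u)$ has length polynomial in the input, since $\varphi$ grows polynomially, so one forms it, cyclically reduces it, and compares it with the cyclic reduction of $v$ in polynomial time. For $n<0$, where $\varphi^{-1}$ grows exponentially, one instead uses the equivalence $\varphi^{\,n}(u)\sim_{F}v\iff u\sim_{F}\varphi^{-n}(v)$ with $-n>0$, reducing to the previous case for the pair $(v,u)$. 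This gives a polynomial-time decision procedure that simultaneously outputs a short conjugator.

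\emph{Case $k\ne 0$.} Now $u\sim_{H_m}v$ if and only if, for some $j$ in a range of linear size (Theorem~\ref{CL of free-by-cyclic} again, applied to the $s$-exponent of a minimal conjugator), $\varphi^{-j}(w_{1})$ is $\varphi^{-k}$-twisted conjugate to $w_{2}$ in $F$. The values of $j$ for which $\varphi^{-j}$ grows exponentially are removed by applying the corresponding positive power of $\varphi$ to this equation and rearranging, after which only free-group words of polynomial length and the fixed automorphisms $\varphi^{\pm k}$ remain. The remaining task is the core subproblem: decide in polynomial time whether two polynomial-length elements of $F$ are $\varphi^{\pm k}$-twisted conjugate, and produce a twisting element. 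I would attack this by computing a canonical representative of each twisted conjugacy class --- reducing first under conjugation by powers of $s$ and then under the twisted $F$-action --- and comparing the two; here the detailed description of how powers of $\varphi$ distort reduced and cyclically reduced words, developed earlier in the paper in the course of proving Theorem~\ref{CL of free-by-cyclic}, should both confine the reduction to polynomially many candidates and force conjugate elements to share a canonical representative. An induction on $m$ through the inclusion $H_{m-1}\hookrightarrow H_m$ and the retraction $H_m\to H_{m-1}$, with the abelian group $H_{1}\cong\Z^{2}$ as base case, is the natural framework for carrying this out.

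The twisted conjugacy step in $F$ is where I expect the real difficulty to lie: the case $k=0$ and all the reductions are soft, using only the linear bound from Theorem~\ref{CL of free-by-cyclic} together with the elementary fact that large positive powers of $\varphi$ are cheap to write down while large negative ones can always be traded away, whereas controlling $\varphi^{\pm k}$-twisted conjugacy classes in $F$ in polynomial time appears to require the full combinatorial apparatus of the earlier sections. The conjugacy search problem then needs no new ingredient: every step above is constructive, and by Theorem~\ref{CL of free-by-cyclic} the conjugator assembled from a power of $s$, a free-group conjugator, and a twisting element has length linear in the input and so can be written down in polynomial time.
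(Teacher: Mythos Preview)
Your reduction to twisted conjugacy in $F$ is the same as the paper's, and your treatment of the case $k=0$ (what the paper calls the $0$-twisted conjugacy problem) is sound if you are willing to take Theorem~\ref{CL of free-by-cyclic} as a black box: bounding the $s$-exponent of a minimal conjugator by the linear conjugator-length function and then running ordinary free-group conjugacy for each candidate exponent works. The paper does not argue this way---it bounds the exponent directly in Lemma~\ref{lem:0 twisted form} using growth estimates, so that Theorems~\ref{CL of free-by-cyclic} and~\ref{CP of free-by-cyclic} are proved in parallel rather than one from the other---but your shortcut is legitimate given how the paper is organised.

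There is, however, a factual slip: $\varphi^{-1}$ does \emph{not} grow exponentially. Lemma~\ref{lem:growth of a_i} shows that $|\varphi^r(a_i)|_F\sim |r|^{\,i-1}$ for both signs of $r$; indeed $\varphi^{-1}$ is positive with respect to the basis $\{b_i\}$ of Section~\ref{subsec:positive inverse}. Your workaround of trading $\varphi^{-n}(u)\sim_F v$ for $u\sim_F\varphi^{\,n}(v)$ is therefore unnecessary, though harmless.

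The genuine gap is the case $k\neq 0$. You reduce correctly to deciding, for $j$ in a bounded range (in fact $0\le j<|k|$ suffices, without invoking Theorem~\ref{CL of free-by-cyclic}), whether two given elements of $F$ are $\varphi^{-k}$-twisted conjugate. But you do not prove this can be done in polynomial time: the sentence about ``computing a canonical representative of each twisted conjugacy class'' is a hope, not an argument, and you yourself flag this as where the real difficulty lies. Note in particular that Theorem~\ref{CL of free-by-cyclic} is of no direct help here: a conjugator of linear $H$-length can have $F$-length polynomial of degree $m$ (Corollary~\ref{cor:rank i distortion}), so a naive search over the $F$-ball of that radius is exponential. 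The paper's solution to this problem is the content of Sections~\ref{prefixes section}--\ref{H-twisted section2}: a delicate structural analysis of how the pieces of a putative conjugator line up under $\varphi^{-p}$ (Proposition~\ref{conjugator-like problem in F}), yielding a polynomially-sized list of candidate ``chunky'' conjugators through which one can search. Without something of comparable strength, your outline does not close.
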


Our proofs of these theorems are intertwined and are constructive. We first describe in detail an
algorithmic procedure that solves the  conjugacy problem and the conjugacy search problem of $H_m$; this is summarized in Section~\ref{the alg}. A na\"ive analysis shows that
this algorithm will output a conjugator whose length is bounded by a  quadratic function
of the lengths of the input words, but a more careful analysis shows that 
with minor modifications this quadratic bound can be reduced to 
a linear one---see Remark~\ref{upgrade to linear remark}. 

We regard these results as a significant step towards
bounding the complexity of the conjugacy problem and conjugacy search problem
in arbitrary free-by-cyclic groups (where the free group has finite rank, which will be a standing assumption throughout our discussion). Free-by-cyclic groups  provide a rich and challenging arena for the study of geometric invariants 
of groups associated with various weak forms of non-positive
curvature (as discussed in \cite{BridsonGroves}, for example).  For any free-by-cyclic group, there is an algorithm solving its word problem in polynomial time \cite{Schleimer}.
There are also  algorithms solving the conjugacy problem \cite{BMMV, BridsonGroves}, but  these do not provide reasonable
bounds on time complexity.  In particular, it is unknown
whether the conjugacy problem and conjugacy search problem  can be solved
in polynomial time. The results in this paper add weight to the conviction that this
is likely.

When a free-by-cyclic group is hyperbolic or its conjugacy problem and conjugacy search problem  can be solved in linear time.  These  are basic examples of a much more general result:   there are polynomial-time solutions for all  groups which are hyperbolic relative to a  finite family of peripheral subgroups in which one can solve the corresponding   problems in polynomial time---see   \cite{Bumagin2, EH, JOR, OConnor}.  Free-by-cyclic groups are hyperbolic relative to a finite family of free-by-cyclic subgroups, each of which has the property that the defining automorphism is  polynomial-growing---see \cite{BFWOld} for history and references.  Thus  the search for a
polynomial time solution to the conjugacy problem reduces to the case where
the defining automorphism is  polynomial-growing, and
Theorem~\ref{CP of free-by-cyclic} solves this problem for a natural
family of prototypes.

It seems reasonable to expect that the conjugator length
function of an arbitrary free-by-cyclic group is linear. This is true
in the hyperbolic case \cite{BrH, Lysenok},
but beyond that little is known. However, by appealing to the relative hyperbolicity result mentioned above, one can again reduce to the case where the defining automorphism
is polynomially growing, because Sale proved \cite{AS}
that if $G$ is non-degenerately hyperbolic relative to parabolic subgroups $P_\omega\
(\omega \in \Omega)$, then $\CL_G(n) \simeq \max\{\CL_{P_\omega}(n)  : \omega\in\Omega\}+n$.  As in the case of complexity, Theorem~\ref{CL of free-by-cyclic}
assures us that the desired bound $\CL_G(n)\simeq n$ is valid in a natural class of prototypes.  

The role that the groups $H_m$ play as prototypes among free-by-cyclic groups is analogous to the role that 
the   \emph{model filiform groups}
   $$ \G_m= \Z^m \rtimes \Z    \ = \  \langle \, a_1, \ldots, a_m , s \mid  a_ia_j=a_ja_i \  \forall i,j, \ s^{-1}a_i s =  a_i a_{i-1} \forall i \geq 2, \ s^{-1}a_1 s =  a_1  \, \rangle,$$
   play among (free-abelian)-by-cyclic groups.    In \cite{BrRi2} we prove that, in contrast to Theorem~\ref{CL of free-by-cyclic},
    the conjugator length function of $\G_m$ is polynomial of degree $m$.

The proofs is this paper are largely combinatorial and 
typically require a delicate analysis of cases. We make heavy use of the notion of `decomposing reduced words into  pieces.'  
This tool is from \cite{DR}, and can be viewed as a special case of the train-track machinery of \cite{BeHa, BFHI, BFH}. 
We have favoured using pieces here because they lend themselves
well to the detailed study of cancellation in the free group that we need, 
and to the precise understanding of how words in the free group grow under
iteration of the automorphism.
\iffalse
One way we use them is as part of a careful study of  the distortion of $F$  in $H$---precise  details of how words grow under the  automorphism allow us to translate length estimates between the two settings. 
Our use of pieces also allows us to determine the structure of a `nice' conjugator, if a conjugator exists, that we then use in the polynomial-time algorithm and conjugacy length estimates.
\fi
Nevertheless, we have structured our proofs with an eye to how they might be 
adapted to cover more general polynomially growing automorphisms. In particular,
we have not relied on any of the alternative ways of viewing $H_m=F\rtimes\Z$
that were discussed earlier. Instead, we consistently view $H_m$ as a semidirect
product and work with elements in the form $wt^n$, where $w\in F$ and $n\in\Z$.
From this viewpoint, the complexity of the conjugacy problem in $H_m$
translates into a collection of {\em twisted conjugacy problems} in $F$.
A benefit of this direct approach is that the outlines of various arguments 
carry over to the general case.

In a sequel to this paper \cite{BrRiSa3}, we will present a different approach to
the conjugacy problem in $H_m$ that does rely
on one of these alternative perspectives,
namely the fact that $H_m$ can be obtained
from $\Z^2$ by a sequence of HNN extensions with cyclic amalgamated groups.
The more geometric  arguments in \cite{BrRiSa3} are  
framed with an eye to further generalisations.

In the next section we will translate the conjugacy problem in $H_m$ 
into a suite of twisted conjugacy
problems in $F$ and lay out the framework for the rest of this article.   
It is the analysis of these twisted problems that forms the bulk of what follows.
Throughout, we shall write $H$ in place of $H_m$ when there is no
danger of ambiguity.

\section{Reduction to twisted problems in $F$} \label{reductions}

Conjugacy in the free group $F = F(a_1, \ldots, a_m)$ can be fully understood thanks to the following well-known result (e.g.\ \cite{LS}).

\begin{lemma} \label{CP in F lemma}
If words $u$ and $v$ on $a_1^{\pm 1}, \ldots, a_m^{\pm 1}$ represent conjugate elements of $F$, then there is a word $w$ on $a_1^{\pm 1}, \ldots, a_m^{\pm 1}$ which is a concatenation of a prefix of $u^{-1}$ with a suffix of $v$ such that $uw=wv$ in $F$.  (If $v$ is cyclically reduced---that is, $vv$ is reduced---then $w$ need only be  a prefix of $u^{-1}$.)  

Assume $u$ does not represent the identity.  Take $k$  to be the   maximal integer such  that there exists $u_0$  with $u_0^k = u$ in $F$.  (So $u_0$ generates the centralizer of $u$ in $F$.)  Then for any such $w$ and $u_0$, $$\set{ \left. W \in F \phantom{u_0^l} \hspace*{-3mm}  \ \right|   \   uW=Wv \text{ in } F }  \ = \       \set{ \left. u_0^l  \ \right| l \  \in \Z }w.$$  
\end{lemma}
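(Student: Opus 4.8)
The plan is to prove the three assertions in turn, using normal forms for free groups and the fact that centralizers in a free group are cyclic.

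\textbf{First assertion (existence of a conjugator that is a prefix of $u^{-1}$ times a suffix of $v$).} Suppose $u \sim v$ in $F$, say $uw_0 = w_0 v$ for some reduced word $w_0$. Write $w_0$ in reduced form and consider the reduced forms of $u$ and $v$. I would argue by induction on $|w_0|$. If $w_0$ is empty there is nothing to prove. Otherwise, look at the first letter of $w_0$: I would show that if no cancellation occurs between $u$ and $w_0$ when forming the reduced word representing $uw_0$, and none between $w_0$ and $v$, then $u w_0 w_0^{-1} = w_0 v w_0^{-1}$ forces, after comparing reduced forms, that $u$ is a cyclic conjugate of $v$ via the relevant prefix of $w_0$, and one can peel off a letter. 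More carefully: if the first letter $x$ of $w_0$ cancels into $u$ (i.e.\ $u$ ends in $x^{-1}$), replace $w_0$ by $x^{-1} w_0$, which represents a conjugator of length $|w_0|-1$ for the pair $(x^{-1} u x, \, v)$ — but then track how $x^{-1}ux$ relates to $u$; alternatively one runs the standard "cyclic reduction" argument: conjugating $u$ by a prefix of $u^{-1}$ one reaches the cyclically reduced core $\bar u$, similarly for $v$, and two cyclically reduced words are conjugate iff they are cyclic permutations of each other, the permutation being realized by a suffix of $\bar v$. Unwinding, the conjugator is (prefix of $u^{-1}$)(cyclic-permutation word, a suffix of $v$)(contribution from $v$'s reduction), and bookkeeping shows it can be taken as a prefix of $u^{-1}$ concatenated with a suffix of $v$; when $v$ is already cyclically reduced the suffix-of-$v$ part is trivial and only the prefix of $u^{-1}$ survives. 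This is the classical argument from \cite{LS}, so I would cite it and only sketch the bookkeeping; the one point needing care is confirming the precise "prefix $\cdot$ suffix" shape rather than merely "conjugator of bounded length."

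\textbf{Second and third assertions (the full solution set).} Assume $u \neq 1$ and let $u_0$ be the root of $u$, so $u = u_0^k$ with $k$ maximal; by the theorem that centralizers in free groups are infinite cyclic (Nielsen–Schreier, or the standard commutation argument: if $xy=yx$ in $F$ with $x,y \neq 1$ then $x,y$ are powers of a common element), the centralizer of $u$ is exactly $\langle u_0 \rangle$. Now fix one solution $w$ from the first assertion. If $W$ is any solution, $uW = Wv = u W_{}$... more precisely $uW = Wv$ and $uw = wv$ give $u W w^{-1} = W v w^{-1} = W w^{-1} u$, so $Ww^{-1}$ centralizes $u$, hence $Ww^{-1} = u_0^\ell$ for some $\ell \in \Z$, i.e.\ $W = u_0^\ell w$. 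Conversely each $u_0^\ell w$ satisfies $u(u_0^\ell w) = u_0^\ell u w = u_0^\ell w v$. This gives the displayed equality $\{W : uW = Wv\} = \{u_0^\ell : \ell \in \Z\}\, w$.

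\textbf{Main obstacle.} The genuinely non-formal part is the first assertion — pinning down that the conjugator has the exact form "prefix of $u^{-1}$ followed by suffix of $v$" (and just a prefix of $u^{-1}$ when $v$ is cyclically reduced), as opposed to merely being short. Everything downstream (the description of the full solution set) is then a one-line consequence of the cyclicity of centralizers, so that is routine. Since the first assertion is explicitly "well-known (e.g.\ \cite{LS})", in the write-up I would give the reduction-to-cyclically-reduced-form argument in a few lines and defer the remaining combinatorial bookkeeping to the reference.
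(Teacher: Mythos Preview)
Your proposal is correct and matches the paper's treatment: the paper does not actually prove this lemma but simply cites it as well-known (referring to \cite{LS}), and your sketch follows exactly the standard argument one finds there---cyclic reduction for the first assertion, and the cyclicity of centralizers in free groups for the description of the full solution set. Your verification that $Ww^{-1}$ centralizes $u$ and the converse computation are both fine, so there is nothing to add.
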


 \begin{conjproblem*} 
Given words $u$ and $v$ on  $a_1^{\pm 1}, \ldots, a_m^{\pm 1}, s^{\pm 1}$ does  there exist a word $w$ on  $a_1^{\pm 1}, \ldots, a_m^{\pm 1}, s^{\pm 1}$ such that  $uw    =   wv$ in $H$?
\end{conjproblem*}

We will use a standard free-by-cyclic {\em{\bf{normal form}}}: each element in $H
=F\rtimes \<s\>$ 
can be expressed uniquely as  $\tilde{u}s^p$  for some reduced word  $\tilde{u}$ on $a_1^{\pm 1}$, \ldots, $a_m^{\pm 1}$  and  $p \in \Z$.

Suppose we have conjugate elements $u$ and $v$  of $H$ expressed in normal form as $\tilde{u}s^p$ and $ \tilde{v} s^q$, respectively.  The conjugacy relation $uw = wv$ in $H$, where $w$ has normal form $\tilde{w}s^r$, implies $p=q$ and amounts to the 
`$\varphi$-twisted conjugacy relation' 
	\begin{equation} \label{twisted conjugacy}
	\tilde{u} \varphi^{-p}(\tilde{w})  \ = \  \tilde{w} \varphi^{-r}(\tilde{v}) \qquad 
	\text{ in the free group }  F.
	\end{equation}
This problem is much harder than the conjugacy problem for free groups, although as we will see, in some instances its solution ultimately reduces to Lemma~\ref{CP in F lemma}.

In the instance where $p=q = 0$, the conjugacy problem in $H$ therefore amounts to:

\begin{0twistedproblem*} 
Given words $\tilde{u}$, $\tilde{v}$ on $a_1^{\pm 1} , \ldots , a_m^{\pm 1}$, do  there exist $r \in \Z$ and $\tilde{w} \in F$ such that  $\tilde{u}  \tilde{w}     =    \tilde{w} \varphi^{-r}(\tilde{v})$ in $F$?
\end{0twistedproblem*} 

 	This problem is addressed in Section~\ref{0-twisted section}. Proposition~\ref{0 twisted conj problem} gives both complexity and conjugator length bounds.

The conjugacy problem in $H$ with $p<0$ is equivalent to that with $p>0$ since we can exchange  $u$ and $v$ with their inverses.  So  in place of $p \neq 0$, let us just  consider   $p=q >0$.  
	
From $uw = wv$ in $H$, we get that  $u(u^jw) = (u^jw) v$ for all $j \in \Z$, and so   there exists a $w$ such that $uw = wv$ in $H$ \emph{and} such that the normal form of $w$ is $\tilde{w}s^r$ for some reduced word $\tilde{w}$  on $a_1^{\pm 1} , \ldots , a_m^{\pm 1}$ and some integer $r$ satisfying  $0\leq r <p$.	
	
	The Cayley graph of $F$ is a tree, so the geodesics joining $1$, $\tilde{u}$, $\tilde{w}$, and $\tilde{u}\varphi^{-p}(\tilde{w})$ form either the `$\H$-configuration' (left) or the `I-configuration' (right)  shown in   Figure~\ref{fig:fbc determine X}.  Accordingly, we can find prefixes $u_0,v_0$ and suffixes $u_1,v_1$ of $\tilde{u}, \varphi^{-r}(\tilde{v})$, respectively,
	and two words $x, y$,  at least one   the empty word,  
	 such that $\tilde{w} = u_0 x v_0^{-1}$ and $\varphi^{-p}(\tilde{w}) = u_1^{-1} x v_1$, 
	where $\tilde{u} = u_0 y u_1$ and $\varphi^{-r}(\tilde{v}) = v_0 y v_1$ as freely reduced words.

	\begin{figure}[ht]
\begin{overpic}[%grid,
scale=1.0,unit=1mm]{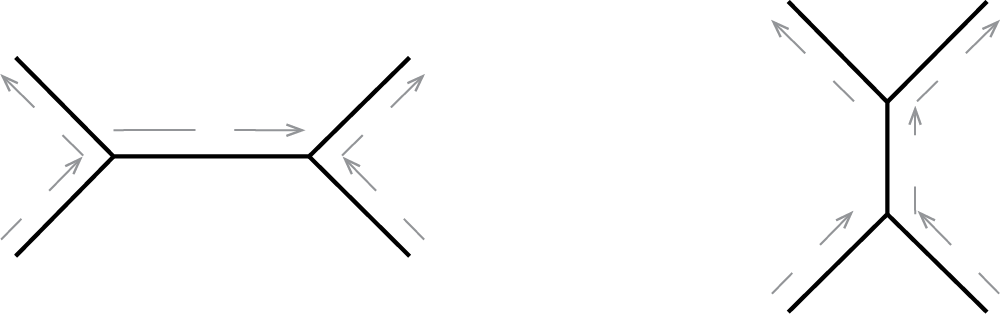}
 \put(-1,21){\small{$\tilde{u}$}}     
 \put(35,3){\small{$\tilde{w}$}}     
  \put(-0.5,3){\small{$1$}}     
 \put(35.5,21.5){\small{$\tilde{u}\varphi^{-p}(\tilde{w})=\tilde{w}\varphi^{-r}(\tilde{v})$}}     
 \put(2,9){\small{$u_0$}}     
 \put(2.5,15.5){\small{$u_1$}}     
 \put(31,15.5){\small{$v_1$}}     
 \put(31.9,9.1){\small{$v_0$}}     
 \put(67.3,4.3){\small{$u_0$}}     
 \put(80.3,4.2){\small{$v_0$}}     
 \put(67.4,20.2){\small{$u_1$}}     
 \put(80,20.5){\small{$v_1$}}     
\put(84,-2){\small{$\tilde{w}$}}     
 \put(64,26){\small{$\tilde{u}$}}     
  \put(65,-2){\small{$1$}}     
 \put(84,26){\small{$\tilde{u}\varphi^{-p}(\tilde{w})=\tilde{w}\varphi^{-r}(\tilde{v})$}}     
 \put(17,14.5){\small{$x$}}     
 \put(76.3,12){\small{$y$}}    
\end{overpic}\hspace{15mm}
\caption{The two possibilities for the relative locations of $1$, $\tilde{u}$, $\tilde{w}$, and $\tilde{u}\varphi^{-p}(\tilde{w})$ in the Cayley graph of $F$: the `$\H$-configuration' on the left and the `I-configuration' on the right.} 
  \label{fig:fbc determine X}
\end{figure}

In the $\H$-configuration, the  conjugacy problem amounts to:

\begin{Htwistedproblem*} 
Given  reduced words $\tilde{u}$, $\tilde{v}$ on $a_1^{\pm 1}, \ldots , a_m^{\pm 1}$  and $p >0$,
do there exist $0 \leq r < p$ and   words $x , u_0,v_0,u_1,v_1 \in F$ such that $\tilde{u} = u_0u_1$ and $\varphi^{-r} (\tilde{v}) = v_0v_1$, as words, and 
		\begin{equation*}   
	    \varphi^{-p}(u_0  x  v_0^{-1})  \ = \  u_1^{-1} x v_1  \  \text{  in }  \   F? 
	  	\end{equation*}
\end{Htwistedproblem*}

	Most of the difficulties and technicalities lie in this problem.
	Section~\ref{prefixes section} addresses a special case of the $\H$-twisted conjugacy problem when $\tilde{u}$ is the empty word. 	As explained there, this amounts to understanding  the structure of common prefixes of a word $\tilde{w}$ with its image $\varphi^r(\tilde{w})$, a crucial ingredient in the general case of the $\H$-twisted conjugacy problem.
	In Section~\ref{H-twisted section1}, specifically Proposition~\ref{conjugator-like problem in F}, we describe how to use a solution (that is, the integer $r$ and the words $x,u_0,u_1,v_0,v_1$) to obtain a `nicer' solution in which $x$ is replaced by a word $X$ whose structure can be described in terms of short chunks that come as subwords of $u_0,u_1,v_0,v_1$, their inverses and the iterates under powers of $\varphi$.
This `chunky' structure enables us, in Section~\ref{H-twisted section2}, to find a short conjugator and describe a polynomial-time solution to this problem.

 	Unfortunately, the short conjugator that one obtains from the `chunky' structure of Proposition~\ref{conjugator-like problem in F} actually only has a quadratic upper bound in terms of its  length in $H$, relative to those of $u$ and $v$.
	Lemma~\ref{lem:swap for a linear conjugator} describes how to replace one of the chunks with a suitable power of $s$ to obtain a linearly bounded conjugator.

In the I-configuration, $x$ is the empty word and $\tilde{w} = u_0   v_0^{-1}$, and so the  conjugacy problem amounts to:

\begin{Itwistedproblem*} 
Given  reduced words $\tilde{u}$, $\tilde{v}$ on $a_1^{\pm 1} , \ldots , a_m^{\pm 1}$ and $p >0$,
do  there exist $0 \leq r < p$ and   prefixes $u_0$ of $\tilde{u}$ and $v_0$ of $\varphi^{-r}(\tilde{v})$   such that 
		\begin{equation*}   
	    \tilde{u} \varphi^{-p}(u_0    v_0^{-1})  \ = \  u_0   v_0^{-1} \varphi^{-r}(\tilde{v})  \  \text{  in }  \   F? 
	  	\end{equation*}
\end{Itwistedproblem*}	

This problem is easy to solve by an exhaustive search. 
Indeed, given $\tilde{u},\tilde{v}$ and $p$ as in the I-twisted conjugacy problem, define $\cal{I}$ to be the set of all pairs $(\tilde{w},r)$, where $0\le r  < p$, and $\tilde{w}$ is a word of the form $UV$ where $U$ is a prefix of $\tilde{u}$ and $V^{-1}$ is a prefix of $\varphi^{-r}(\tilde{v})$.
A solution, if it exists, can be found by applying the solution to the word problem in $F$ to check the validity of each equation $\tilde{u} \varphi^{-p}(\tilde{w}) = \tilde{w} \varphi^{-r}(\tilde{v})$ for each   $(\tilde{w},r) \in \cal{I}$.

Like the conjugacy problem, the 0-twisted-conjugacy problem, the $\H$-twisted conjugacy problem, and the  I-twisted conjugacy problem all have `search' variants in which one is given that a collection of integers and words solving the problem exists and is required to exhibit one.

For $g \in H$, let $\abs{g}_H$ denote the length of a shortest word   on $\{a_1,\ldots,a_m,s\}$ that represents $g$.
If $g \in F$, let $\abs{g}_F$ be the  length of a shortest word   on  $\{a_1,\ldots, a_m\}$ that represents $g$.  For a word 
(not necessarily reduced) $w$, $\ell(w)$ denotes the number of letters in $w$.     

The following summarises results from Proposition~\ref{0 twisted conj problem} and Corollary~\ref{conjugator-like problem in F solution}, along with the discussion above for the I-twisted case. 

 \begin{prop} \label{twisted search} With the notation established above,
in $H_m$,
the 0-twisted conjugacy problem, the $\H$-twisted conjugacy problem, and the  I-twisted conjugacy problem can each be solved by deterministic algorithms
with input $(\tilde{u}, \tilde{v}, p)$
whose running time is bounded by a polynomial in $p+ \ell(\tilde{u}) + \ell(\tilde{v})$.  And the same is true for the `search' variants of these problems.
\end{prop}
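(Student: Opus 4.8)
The plan is to assemble the statement from three largely independent analyses, one per twisted problem, verifying in each case that the running time — and the production of a witness in the search variant — is polynomial in $p+\ell(\tilde u)+\ell(\tilde v)$. The single common ingredient worth isolating first is that $\varphi$ and $\varphi^{-1}$ are polynomially growing: for a reduced word $w$ and $r\ge 0$ one has $\ell(\varphi^{\pm r}(w))\le C(m)\,r^{m-1}\,\ell(w)$, so every word $\varphi^{-r}(\tilde v)$ with $0\le r<p$ can be computed in time polynomial in $p+\ell(\tilde v)$ and has length polynomial in the same quantity. This is exactly what lets a dependence on $p$ be absorbed into a polynomial bound, and it is used in both the $\H$- and I-twisted cases.

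For the $0$-twisted conjugacy problem (the case $p=q=0$) the assertion is precisely Proposition~\ref{0 twisted conj problem}, which gives a deterministic polynomial-time decision procedure; since that procedure is constructive it also exhibits the witnessing pair $(r,\tilde w)$ when one exists, and there is no $p$ present, so the bound is polynomial in $\ell(\tilde u)+\ell(\tilde v)$. For the $\H$-twisted conjugacy problem — the substantive case — I would simply invoke the machinery developed in Sections~\ref{H-twisted section1}–\ref{H-twisted section2}: Proposition~\ref{conjugator-like problem in F} replaces an arbitrary solution $(r,x,u_0,u_1,v_0,v_1)$ by one in which $x$ is rebuilt from a bounded number of short chunks cut from $u_0,u_1,v_0,v_1$, their inverses and their $\varphi^{\pm}$-iterates, and Corollary~\ref{conjugator-like problem in F solution} turns this structural description into a search over a candidate set of size polynomial in $p+\ell(\tilde u)+\ell(\tilde v)$, each candidate being checked by the (linear-time) word problem in $F$; the running time bound and the output of a witness both follow. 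I expect this $\H$-twisted step to be where essentially all the genuine difficulty lies, but it has already been delegated to the later sections, so here one only has to read off the bound and note that the enumeration of $\varphi^{-r}(\tilde v)$ for $0\le r<p$ respects the polynomial-growth estimate above.

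For the I-twisted conjugacy problem the exhaustive search described before the statement of Proposition~\ref{twisted search} already suffices. The set $\mathcal I$ of pairs $(\tilde w,r)$ with $0\le r<p$, $\tilde w=UV$, $U$ a prefix of $\tilde u$, and $V^{-1}$ a prefix of $\varphi^{-r}(\tilde v)$, has cardinality at most $p\,(\ell(\tilde u)+1)\,\max_{0\le r<p}\big(\ell(\varphi^{-r}(\tilde v))+1\big)$, which is polynomial in $p+\ell(\tilde u)+\ell(\tilde v)$ by polynomial growth; enumerating $\mathcal I$ and, for each member, computing $\varphi^{-p}(\tilde w)$ and testing the equation $\tilde u\,\varphi^{-p}(\tilde w)=\tilde w\,\varphi^{-r}(\tilde v)$ in $F$ by free reduction takes polynomial time overall, and the first pair that passes the test is the witness required by the search version. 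Combining the three cases gives the proposition; the only point needing care in the write-up is the uniformity of the polynomial-growth constants in $m$, which is routine.
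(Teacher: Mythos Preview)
Your proposal is correct and matches the paper's approach: the proposition is simply a summary statement, assembled from Proposition~\ref{0 twisted conj problem}(III) for the 0-twisted case, Corollary~\ref{conjugator-like problem in F solution} for the $\H$-twisted case, and the exhaustive search over $\mathcal I$ described just before the proposition for the I-twisted case. Your added remarks on polynomial growth of $\varphi^{\pm 1}$ controlling the sizes of $\varphi^{-r}(\tilde v)$ and $\varphi^{-p}(\tilde w)$ make explicit exactly the point that underlies the I-twisted bound.
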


 Given that $\abs{p} \leq \ell(u)$,   $\ell(\tilde{u})  \leq C  \ell(u)^m$, and $\ell(\tilde{v})  \leq C  \ell(v)^m$ for a suitable constant $C>0$, Theorem~\ref{CP of free-by-cyclic} follows from Proposition~\ref{twisted search} as per the above discussion.   
 
Turning to  conjugator length, since we want to bound conjugator length in $H$,   bounds pertaining to the three subordinate problems in $F$ will need to be given in terms of the word metric on $H$ rather than on $F$.    The issue of comparing these two metrics is delicate and is the subject of Section~\ref{distortion section}.

In summary, the article is structured as follows.
Section~\ref{defs and conventions section} introduces \emph{piece decompositions}, 
a useful technical tool.  
Section~\ref{distortion section} addresses the distortion of $F$ in $H$.
Section~\ref{0-twisted section} deals with the 0-twisted conjugacy problem.
After   technical results  in Section~\ref{prefixes section} on common prefixes of words $\tilde{w} \in F$ and their iterates  under powers of $\varphi$, we   handle the $\H$-twisted-conjugacy problem instances in Sections~\ref{H-twisted section1} and \ref{H-twisted section2}.  
The conjugator length argument in $H$ is completed in Section~\ref{sec:CL of H}. Section~\ref{the alg} summarizes how to assemble our results into an algorithm for Theorem~\ref{CP of free-by-cyclic}. 
In Section \ref{s:last} we focus on the structure of $H_m$ as an
iterated HNN extension and outline an alternative solution to the conjugacy problem.

 \section{Preliminaries: piece decompositions, definitions, and conventions} \label{defs and conventions section}

\subsection{Some notations, and conventions}\label{subsec:defs and conventions}

For a word $w$ on a set of letters, we let $\ell(w)$ denote its length.
As mentioned in Section~\ref{reductions}, $\abs{\cdot}_H$ and $\abs{\cdot}_F$ denote the word length of an element in $H$ or $F$ respectively, with respect to generating sets $\{a_1,\cdots , a_m , s \}$ and $\{a_1,\cdots , a_m \}$ respectively. 

The \emph{rank}, written $\rank(w)$, of a word $w$ on $a_1^{\pm 1}, \ldots, a_m^{\pm 1}$ is the maximal $i$ such that $a_i^{\pm 1}$ appears in $w$.  The empty word has rank $0$. The rank of  $g \in F$  is the rank of the reduced word $w$ representing $g$.

For a word $w$, when we write $\varphi(w)$ we mean the reduced word representing $\varphi(w)$ in $F$.

 \subsection{Positivity of $\varphi$ and $\varphi^{-1}$}\label{subsec:positive inverse}

 The inverse of $\varphi$ is
\begin{equation}\label{eq:phi inverse}
\varphi^{-1}(a_i)  \ = \  
\begin{cases}
a_{2k}a_{2(k-1)}\cdots a_2 a_1^{-1} a_3^{-1}\cdots a_{2k-1}^{-1} & \textrm{ when $i=2k$,}\\ \\
a_{2k+1}a_{2k-1}\cdots a_1 a_2^{-1} a_4^{-1} \cdots a_{2k}^{-1} & \textrm{ when $i=2k+1$.}
\end{cases}
\end{equation}

A useful feature of $\varphi$ is that it is a \emph{positive automorphism}: whenever $g \in F$ is represented by a   positive word,   so is $\varphi(g)$.  This is not true of $\varphi^{-1}$.  However $\varphi^{-1}$ is positive with respect to the basis $b_1, \dots, b_m$, defined by $b_i = a_i^{(-1)^{i+1}}$ for all $i$, since 
\begin{equation}\label{eq:phi inverse positive}
\varphi^{-1}(b_i)  \ = \  
\begin{cases}
b_{2k-1} \cdots b_3   b_1   b_2  \cdots b_{2(k-1)}  b_{2k}  & \textrm{ when $i=2k$,}\\ \\  
b_{2k+1}b_{2k-1}\cdots b_1 b_2  b_4  \cdots b_{2k}  & \textrm{ when $i=2k+1$.}
\end{cases}
\end{equation}

\subsection{Pieces and their types}\label{subsec:pieces}

 We will find it useful to split $w$ into \emph{pieces} that behave well
when one takes iterated images under $\varphi$.  
A \emph{rank-$i$ piece} in $w$ is a maximal subword of one of the following 
four \emph{types}:
		$$a_iu,\ \ ua_i^{-1}, \ \ a_iua_i^{-1}, \ \ u,$$
where $u$ is a (possibly empty) word of rank at most $i-1$. Pieces of the first three types are said to be of {\em strict rank-$i$}.
Each  rank-$i$ word   can be expressed as a concatenation of a minimal number of rank-$i$ pieces in a unique manner.  We call this \emph{the rank-$i$ decomposition of $w$} and refer to the pieces involved as the \emph{pieces of $w$}.  We denote the number of these pieces by $\pieces{w}$. For example, $w = a_3 a_2 a_1^{-1} a_3 a_3 a_1 a_3^{-1} a_2 = (a_3 a_2 a_1^{-1}) (a_3) (a_3 a_1 a_3^{-1}) (a_2)$ is a rank-3 word with     $\pieces{w} =4$.

For $g \in F$ we write $\pieces{g} := \pieces{w}$, where $w$ is a reduced word representing $g$.

We need the following facts about pieces:

 \begin{lemma}\label{lem:pieces exercise0}
	If  a reduced word $\pi$ is a piece of rank $i$, then both $\varphi(\pi)$ and $\varphi^{-1}(\pi)$ are also pieces of rank-$i$ and have the same type as $\pi$.
\end{lemma}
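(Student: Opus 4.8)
The plan is to prove this by induction on the rank $i$, treating the four types of piece in turn, and using the explicit formulas for $\varphi$ and $\varphi^{-1}$ together with the positivity facts from Section~\ref{subsec:positive inverse}. The key observation is that $\varphi$ fixes $a_1$ and sends $a_i \mapsto a_i a_{i-1}$, so $\varphi$ does not raise rank: if $w$ has rank $\le i$ then so does $\varphi(w)$, and the letter $a_i^{\pm 1}$ can only be produced from an $a_i^{\pm 1}$ already present in $w$ (since no generator's image introduces a new highest-rank letter). The same is true of $\varphi^{-1}$ by inspecting \eqref{eq:phi inverse}: each $\varphi^{-1}(a_j)$ is a word in $a_1^{\pm1},\dots,a_j^{\pm1}$, so $\varphi^{-1}$ also preserves the property ``rank $\le i$'' and cannot create new occurrences of $a_i^{\pm1}$.

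First I would establish the rank-preservation statement carefully: for a reduced word $u$ of rank $\le i-1$, both $\varphi(u)$ and $\varphi^{-1}(u)$ are reduced words of rank $\le i-1$ — this is immediate from the formulas since the images of $a_1,\dots,a_{i-1}$ and their inverses involve only $a_1,\dots,a_{i-1}$. Next, for the piece $\pi = a_i u$ with $u$ of rank $\le i-1$: we have $\varphi(a_i u) = a_i a_{i-1}\varphi(u)$, and after free reduction the leading $a_i$ survives (nothing in $a_{i-1}\varphi(u)$, a rank $\le i-1$ word, can cancel it), so the reduced form is $a_i v$ with $v = a_{i-1}\varphi(u)$ of rank $\le i-1$; hence $\varphi(\pi)$ is again a piece of type $a_i u$. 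For $\varphi^{-1}(a_i u) = \varphi^{-1}(a_i)\,\varphi^{-1}(u)$, the word $\varphi^{-1}(a_i)$ from \eqref{eq:phi inverse} begins with $a_i^{+1}$ (it is $a_{2k}\cdots$ or $a_{2k+1}\cdots$), and the remaining letters of $\varphi^{-1}(a_i)\varphi^{-1}(u)$ have rank $\le i-1$, so again the leading $a_i$ cannot cancel and the reduced form is $a_i v'$ with $v'$ of rank $\le i-1$. The types $u a_i^{-1}$ and $a_i u a_i^{-1}$ follow by the symmetric argument at the right end (using that $\varphi^{-1}(a_i)$ ends in $a_{2k-1}^{-1}$ or $a_{2k}^{-1}$... — here one must check the sign at the \emph{end} of $\varphi^{-1}(a_i)$, not the start; this is where a small case split on parity of $i$ enters). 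The type-$u$ case (rank $\le i-1$) is exactly the rank-preservation statement already proved. The only genuinely delicate point — the main obstacle — is verifying that no cancellation occurs \emph{between} the $\varphi$- or $\varphi^{-1}$-image of the leading/trailing $a_i^{\pm1}$ and the image of $u$, i.e. that the highest-rank letter is never consumed; but this is forced because that letter is the unique occurrence of rank $i$ in the product and everything adjacent to it has strictly smaller rank, so free reduction is confined to the rank $\le i-1$ part.

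One subtlety to flag: the statement asserts $\varphi(\pi)$ and $\varphi^{-1}(\pi)$ are \emph{pieces}, i.e. that the reduced image is \emph{still maximal} with the given type inside itself — but since we are applying $\varphi^{\pm1}$ to the isolated word $\pi$ (not to $\pi$ sitting inside a larger $w$), maximality is automatic: a word of the form $a_i v$ or $v a_i^{-1}$ or $a_i v a_i^{-1}$ or $v$ with $v$ of rank $\le i-1$ is, by definition, a single rank-$i$ piece of that type. So really the content is entirely in the rank/cancellation bookkeeping above, and no induction across pieces is needed — a direct argument suffices, with the only case analysis being the parity of $i$ when reading off the first and last letters of $\varphi^{-1}(a_i)$ from \eqref{eq:phi inverse}.
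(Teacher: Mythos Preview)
The paper does not actually prove this lemma: it explicitly leaves the proofs of Lemmas~\ref{lem:pieces exercise0} and~\ref{lem:pieces exercise} as exercises (with a pointer to \cite{DR}), so there is nothing in the paper to compare your argument against.

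Your direct argument is correct and is the natural one: $\varphi^{\pm 1}$ cannot raise rank, $\varphi^{\pm 1}(a_i)$ begins with $a_i$, and the unique rank-$i$ letter in the image cannot be cancelled by the adjacent rank $\le i-1$ material. Two cosmetic points worth cleaning up. First, you open by announcing induction on $i$ but never invoke an inductive hypothesis, and indeed you close by remarking that no induction is needed; drop the opening remark. Second, for the types $ua_i^{-1}$ and $a_iua_i^{-1}$ under $\varphi^{-1}$, the relevant fact is that $\varphi^{-1}(a_i^{-1})=(\varphi^{-1}(a_i))^{-1}$ \emph{ends} in $a_i^{-1}$, which follows from $\varphi^{-1}(a_i)$ \emph{starting} with $a_i$ --- something you already checked; the parity-dependent \emph{last} letter of $\varphi^{-1}(a_i)$ plays no role, so the case split on parity is unnecessary. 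One further check you might make explicit: for $\pi=a_iua_i^{-1}$ (with $u$ nonempty since $\pi$ is reduced), the two outer $a_i^{\pm 1}$ in $\varphi^{\pm 1}(\pi)$ do not cancel one another, because the reduced rank $\le i-1$ middle is nontrivial (as $\varphi^{\pm 1}$ is an automorphism and $u\neq 1$).
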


\begin{lemma}\label{lem:pieces exercise}
	Let $w$ be a reduced word of  rank $i$.  Let $w = \pi_1 \cdots \pi_p$ be its rank-$i$ decomposition. Then  there is no cancellation between pieces on applying $\varphi$ or $\varphi^{-1}$---that is,  for   $k=1, \ldots, p-1$,  the words $\varphi^{\pm 1}(\pi_{k+1})$ and $\varphi^{\pm 1}(\pi_k)$ start and end (respectively) with letters that are not mutual inverses.  
	As a consequence,  $\varphi^r(\pi_1) \cdots \varphi^r(\pi_p)$ is freely reduced and is the rank-$i$ decomposition of $\varphi^r(w)$ for all $r \in \Z$.
\end{lemma}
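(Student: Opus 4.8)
The plan is to prove the lemma in two stages. First I would establish the "no cancellation" claim for a single application of $\varphi$ or $\varphi^{-1}$; then I would bootstrap to arbitrary powers $r \in \Z$ by induction, using Lemma~\ref{lem:pieces exercise0} to control the types of pieces along the way.

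For the single-step claim, fix $k$ and consider consecutive pieces $\pi_k, \pi_{k+1}$ in the rank-$i$ decomposition of $w$. The key observation is that the last letter of $\pi_k$ and the first letter of $\pi_{k+1}$ are determined, up to the low-rank "filler" $u$, by the types listed in Section~\ref{subsec:pieces}. Concretely, a rank-$i$ piece either starts with $a_i$ (types $a_iu$ and $a_iua_i^{-1}$) or starts with a letter of rank $\le i-1$ (types $ua_i^{-1}$ and $u$); symmetrically it either ends with $a_i^{-1}$ or ends with a letter of rank $\le i-1$. I would argue that when we apply $\varphi^{\pm 1}$, the image $\varphi^{\pm 1}(\pi_k)$ ends with $a_i^{-1}$ precisely when $\pi_k$ does (and otherwise ends with a letter of rank $\le i-1$), and similarly for the starting letter of $\varphi^{\pm 1}(\pi_{k+1})$: this is because $\varphi(a_i) = a_i a_{i-1}$ and $\varphi^{-1}(a_i)$ both begin with $a_i$ and, more importantly, $\varphi^{\pm 1}$ fixes the relative position of the rank-$i$ occurrences — each $a_i^{\pm 1}$ in $\pi_k$ is sent to a word beginning with $a_i^{\pm 1}$ (by the explicit formulas \eqref{eq:phi inverse}, or by positivity in the appropriate basis together with the rank being unchanged), and the only rank-$i$ letters in $\varphi^{\pm 1}(\pi_k)$ come from rank-$i$ letters of $\pi_k$. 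Now a cancellation between $\varphi^{\pm 1}(\pi_k)$ and $\varphi^{\pm 1}(\pi_{k+1})$ would require the last letter of the former to be the inverse of the first letter of the latter. If that common letter were $a_i^{\pm 1}$, then (tracing back) $\pi_k$ would have had to end with $a_i^{-1}$ and $\pi_{k+1}$ begin with $a_i$, which would already cause cancellation in the reduced word $w$ — contradiction. If the cancelling letter has rank $\le i-1$, then the boundary behaviour of $\varphi^{\pm 1}(\pi_k)$ at its right end is governed by the low-rank tail, and here I would invoke the induction hypothesis on rank: within each piece the rank-$(i-1)$ decomposition of its low-rank part is respected by $\varphi^{\pm 1}$, and the maximality of the pieces $\pi_k$ in $w$ forces the boundary letters to be non-cancelling. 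So no cancellation occurs at any boundary.

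Once the single-step claim is in hand, the statement that $\varphi^r(\pi_1)\cdots\varphi^r(\pi_p)$ is freely reduced and is exactly the rank-$i$ decomposition of $\varphi^r(w)$ follows by induction on $|r|$: Lemma~\ref{lem:pieces exercise0} guarantees that each $\varphi^{\pm 1}(\pi_j)$ is again a rank-$i$ piece of the same type, so after applying $\varphi^{\pm 1}$ we again have a concatenation of rank-$i$ pieces with no cancellation at the junctions, hence a freely reduced word whose decomposition into the minimal number of rank-$i$ pieces is precisely $\varphi^{\pm 1}(\pi_1)\cdots\varphi^{\pm 1}(\pi_p)$ (minimality is inherited because the types, and in particular the pattern of strict-rank-$i$ pieces, are preserved). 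Iterating gives the claim for all $r$.

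The main obstacle I anticipate is the bookkeeping at boundaries where the cancelling letter would have rank $\le i-1$: one must set up the induction on rank cleanly so that "maximality of the pieces of $w$" translates into a statement about the low-rank tails/heads that is stable under $\varphi^{\pm 1}$. I would handle this by formulating a slightly stronger inductive statement that simultaneously tracks, for each piece, the first and last letters of its image under every $\varphi^{\pm 1}$ — and observing that these are controlled entirely by the (inductively understood) behaviour of the rank-$(i-1)$ pieces sitting at the ends. With that, the rank-$i$ case reduces to combining the explicit shape of $\varphi^{\pm1}(a_i^{\pm1})$ with the rank-$(i-1)$ statement, and everything goes through.
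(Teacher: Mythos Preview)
The paper leaves this lemma as an exercise, so there is no authors' proof to compare against directly; I will assess your argument on its own merits.  Your two-stage plan --- establish the claim for a single application of $\varphi^{\pm 1}$ and then iterate using Lemma~\ref{lem:pieces exercise0} --- is the natural one and is correct in outline.

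Where you over-engineer is in the treatment of possible low-rank cancellation at a boundary $\pi_k\mid\pi_{k+1}$.  The maximality of the pieces (which you do invoke) already rules this case out entirely, so no induction on rank is needed.  Concretely: because $w$ is reduced and the decomposition is into \emph{maximal} rank-$i$ pieces, the only possible patterns for the pair (last letter of $\pi_k$, first letter of $\pi_{k+1}$) are
\[
(a_i,\,a_i),\quad (a_i^{-1},\,a_i^{-1}),\quad (a_i^{-1},\,\text{rank}<i),\quad (\text{rank}<i,\,a_i);
\]
any other combination either violates reducedness of $w$ or would allow $\pi_k$ and $\pi_{k+1}$ to merge into a single piece.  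In particular the two sides of a boundary are never simultaneously of rank $<i$, so a direct check of these four patterns under $\varphi$ and $\varphi^{-1}$ --- using only that $\varphi^{\pm 1}(a_i)$ begins with $a_i$ and ends with a letter of rank $i-1$, together with the type-preservation of Lemma~\ref{lem:pieces exercise0} --- finishes the one-step claim.  The ``slightly stronger inductive statement'' you propose is not required.

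There is also a small inaccuracy.  You assert that when $\pi_k$ does not end in $a_i^{-1}$, its image $\varphi^{\pm 1}(\pi_k)$ ends with a letter of rank $\le i-1$.  This can fail: for instance $\pi_k=a_2a_1^{-1}$ has $\varphi(\pi_k)=a_2$.  So $\varphi^{\pm 1}(\pi_k)$ can end with $a_i$, and you must also exclude the cancelling pair $(a_i,a_i^{-1})$ at the boundary of the images.  This is easy once noticed: a rank-$i$ piece ends in $a_i$ only if it equals $a_i$, and begins with $a_i^{-1}$ only if it equals $a_i^{-1}$, so this pattern in the images forces $\pi_k=\varphi^{\mp 1}(a_i)$ and $\pi_{k+1}=\varphi^{\mp 1}(a_i)^{-1}$, contradicting reducedness of $w$.
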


We leave the proofs of Lemmas~\ref{lem:pieces exercise0} and \ref{lem:pieces exercise} as   exercises.  Very similar observations are made in \cite{DR}.

\section{Growth rates and distortion} \label{distortion section}

 In order to analyze the $\varphi$-twisted conjugacy problem in $F$ we  will examine in Section~\ref{lem:fixed pieces} how free group elements grow in length on repeated application of $\varphi$.  Then in Section~\ref{subsec:tools for distortion} we 
establish some useful inequalities relating the  normal form of 
$g\in H$ to $|g|_H$.   
  
\subsection{Growth rates}

Our next   few results   lead into Proposition~\ref{prop:lower bound distortion}, which gives a precise estimate of how words  grow on repeated applications of $\varphi^{\pm 1}$.  (Cf.\ \cite{Levitt} in which  bounds are given, but with the constants depending on the group element.)

\begin{lemma}\label{lem:fixed pieces}
${\rm{Fix}}(\varphi) = \< a_1, \, a_2a_1a_2^{-1}\>$.
\end{lemma}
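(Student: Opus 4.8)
The inclusion $\langle a_1,\,a_2a_1a_2^{-1}\rangle \subseteq \Fix(\varphi)$ is immediate: $\varphi(a_1)=a_1$ by definition, and $\varphi(a_2a_1a_2^{-1}) = (a_2a_1)a_1(a_2a_1)^{-1} = a_2a_1a_2^{-1}$ after free reduction, so both generators are fixed and hence so is the subgroup they generate. (Note this subgroup is free of rank $2$, since $a_1$ and $a_2a_1a_2^{-1}$ generate a free group inside $F$.) The substance of the lemma is the reverse inclusion $\Fix(\varphi)\subseteq\langle a_1,\,a_2a_1a_2^{-1}\rangle$.

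For the reverse inclusion, the plan is to argue by induction on the rank of a reduced word $w$ fixed by $\varphi$, using the piece-decomposition machinery of Section~\ref{subsec:pieces}. First, if $\rank(w)\le 1$ then $w$ is a power of $a_1$ and we are done. So suppose $\rank(w)=i\ge 2$ and write $w=\pi_1\cdots\pi_p$ for its rank-$i$ decomposition. By Lemma~\ref{lem:pieces exercise}, $\varphi(w)=\varphi(\pi_1)\cdots\varphi(\pi_p)$ is already the rank-$i$ decomposition of $\varphi(w)$, with no cancellation between consecutive pieces; hence $\varphi(w)=w$ forces $p$ to be preserved and, matching pieces one at a time from the left, $\varphi(\pi_k)=\pi_k$ for each $k$. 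So it suffices to show that a single fixed piece of rank $i\ge 2$ already lies in $\langle a_1, a_2a_1a_2^{-1}\rangle$; in particular I will show such a piece must actually have rank $\le 2$, indeed must be a power of $a_2a_1a_2^{-1}$ (or of $a_1$). This reduces the whole lemma to analysing the four piece-types $a_i u$, $ua_i^{-1}$, $a_iua_i^{-1}$, and $u$ with $\rank(u)\le i-1$.

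The heart of the argument is thus a careful examination of how $\varphi$ acts on a strict-rank-$i$ piece. For a piece of type $a_iu$, applying $\varphi$ gives a reduced word beginning $a_ia_{i-1}\varphi(u)\cdots$, and comparing prefixes with $a_iu$ forces $u$ to begin with $a_{i-1}$; peeling this off and iterating (or, more efficiently, tracking the number of occurrences of $a_i^{\pm1}$ and of $a_{i-1}^{\pm1}$, which is preserved by $\varphi$ on each piece) one shows no nonempty word of this type can be fixed unless $i=2$ and we are in a very restricted situation. The type $ua_i^{-1}$ is symmetric. The decisive case is $a_iua_i^{-1}$: here $\varphi(a_iua_i^{-1}) = a_ia_{i-1}\,\varphi(u)\,a_{i-1}^{-1}a_i^{-1}$ (after reduction, using Lemma~\ref{lem:pieces exercise0} so that the $a_i^{\pm1}$'s survive), so $w$ fixed forces $u = a_{i-1}\,\varphi(u)\,a_{i-1}^{-1}$, i.e. $a_{i-1}^{-1} u a_{i-1}$ is a fixed word of rank $\le i-1$; by induction this conjugate lies in $\langle a_1,a_2a_1a_2^{-1}\rangle$, and one then checks that for $i\ge 3$ the conjugating element $a_{i-1}$ cannot bring us back into $F$ compatibly (a length/rank obstruction), while for $i=2$ one gets exactly $u\in\langle a_1\rangle$ and hence $a_2u a_2^{-1}$ a power of $a_2a_1a_2^{-1}$, as desired. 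Finally the degenerate type-$u$ case just feeds directly into the induction.

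I expect the main obstacle to be the bookkeeping in the strict-rank-$i$ cases $a_iu$ and $a_iua_i^{-1}$: one must rule out the possibility that cancellation deep inside $\varphi(u)$ conspires to restore the word, and the cleanest way to do this is probably to set up an invariant — e.g. the total number of $a_i^{\pm1}$ letters, or the "$a_{i-1}$-exponent sum read between the $a_i$'s" — that $\varphi$ changes monotonically (or simply preserves while forcing an incompatible prefix), so that no nontrivial fixed piece of strict rank $i\ge 2$ can exist except in the one surviving configuration $a_2 a_1^n a_2^{-1}$. Threading the induction correctly through the conjugation $u\mapsto a_{i-1}^{-1}ua_{i-1}$ in the $a_iua_i^{-1}$ case, and confirming the rank drops genuinely, is the one place I would be most careful.
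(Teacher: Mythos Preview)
Your overall framework---reducing to single pieces via Lemmas~\ref{lem:pieces exercise0} and~\ref{lem:pieces exercise}, then eliminating the types $a_iu$ and $ua_i^{-1}$ by tracking the $a_{i-1}$--exponent sum---matches the paper exactly. The paper observes that applying $\varphi$ to $a_i^\delta u a_i^{-\delta'}$ adds $\delta-\delta'$ to the exponent sum of $a_{i-1}$, forcing $\delta=\delta'=1$; this is your ``invariant that $\varphi$ changes monotonically''.

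There is, however, a genuine gap in your treatment of the decisive case $\pi=a_iua_i^{-1}$ with $i\ge 3$. From $\varphi(\pi)=\pi$ you correctly obtain $u=a_{i-1}\,\varphi(u)\,a_{i-1}^{-1}$, but you then assert that $a_{i-1}^{-1}ua_{i-1}$ is \emph{fixed} by $\varphi$. It is not: the relation says only that $a_{i-1}^{-1}ua_{i-1}=\varphi(u)$, whereas $\varphi\bigl(a_{i-1}^{-1}ua_{i-1}\bigr)=\varphi(a_{i-1})^{-1}\varphi(u)\varphi(a_{i-1})=a_{i-2}^{-1}a_{i-1}^{-1}\,\varphi(u)\,a_{i-1}a_{i-2}$, which has no reason to equal $a_{i-1}^{-1}ua_{i-1}$. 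So the inductive hypothesis does not apply, and your proposed rank induction does not close. The vague ``length/rank obstruction'' you invoke afterwards rests on this false premise and is not a substitute argument.

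The paper handles this case differently: it iterates the twisted relation to get
\[
\varphi^r(u)\;=\;\varphi^{r-1}(a_{i-1}^{-1})\cdots a_{i-1}^{-1}\,u\,a_{i-1}\cdots\varphi^{r-1}(a_{i-1}),
\]
and then compares the number of rank-$(i-1)$ pieces on both sides. Since $\pieces{\varphi^r(u)}=\pieces{u}$ by Lemma~\ref{lem:pieces exercise}, for $r$ large the right-hand side must collapse entirely, forcing $u$ to have the explicit form $a_{i-1}\cdots\varphi^\alpha(a_{i-1})\varphi^\beta(a_{i-1}^{-1})\cdots a_{i-1}^{-1}$; a final piece count with $i>2$ then yields a contradiction. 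This iterate-and-count argument is what you are missing.
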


\begin{proof}
 That ${\rm{Fix}}(\varphi) \supseteq \< a_1, \, a_2a_1a_2^{-1}\>$ is straight-forward.  For the reverse inclusion, first observe that by Lemmas~\ref{lem:pieces exercise0} and \ref{lem:pieces exercise}, $w$ is fixed by $\varphi$ if and only if its pieces are all  fixed by $\varphi$. 
 So we can focus on the case where  $w$ is a single  non-empty piece $\pi = a_i^\delta u a_i^{-\delta'}$   such that  $\pi = \varphi(\pi)$ in $F$, and  $\delta,\delta' \in \{0,1\}$ are not both zero, and  $u$   a reduced word of rank at most $i-1$  with $i\geq 2$.     
Applying $\varphi$ to $\pi$ adds $\delta - \delta'$ to   
 the exponent sum of the $a_{i-1}$ present.  But  since $\pi = \varphi (\pi)$, the exponent sum of  the $a_{i-1}$  in  $\pi$ and $\varphi (\pi)$ must agree, and therefore    $\delta = \delta'=1$ and $\pi = a_i  u a_i^{-1}$ (and, in particular, $u$ is non-empty).  

It remains to show that $i=2$. Assume, for contradiction, that $i>2$.
	Well,  $\pi = \varphi (\pi)$ tells us that
	$a_i u a_i^{-1}=\varphi(a_i u a_i^{-1}) = a_i  a_{i-1}  \varphi(u) a_{i-1}^{-1}a_i^{-1}$, and so  $\varphi(u) = a_{i-1}^{-1} u a_{i-1}$.  
	Reapplying $\varphi$ multiple times gives
	$$\varphi^r(u)  \ = \  \varphi^{r-1}(a_{i-1}^{-1}) \cdots a_{i-1}^{-1} u a_{i-1} \cdots \varphi^{r-1}(a_{i-1}), \ \ \textrm{for $r\geq 1$.}$$
	By Lemma~\ref{lem:pieces exercise}, the number of pieces $p$ in the rank-$(i-1)$ decompositions of $u$ and $\varphi^r(u)$ are the same for all $r\geq 1$.
	Hence there is cancellation in our expression for $\varphi^r(u)$.  This cancellation can occur only at either end of $u$.
	For $r$ large enough, say $r>p+1$, we will need $u$ to completely cancel out under free reduction.
	In particular, there are some $\alpha$ and $\beta$ such that
	$$1 \ = \  \varphi^\alpha(a_{i-1}^{-1}) \ldots a_{i-1}^{-1} u a_{i-1} \ldots \varphi^{\beta}(a_{i-1})$$
	and so
	$$u  \ = \  a_{i-1} \cdots \varphi^{\alpha}(a_{i-1})\varphi^\beta(a_{i-1}^{-1}) \cdots a_{i-1}^{-1}.$$
	If $\alpha = \beta$, then we get complete cancellation and $u=1$, a contradiction.
	So we assume $\alpha \ne \beta$.
	We can count the number of pieces $p$ of $u$ by observing the locations of letters $a_{i-1}^{\pm 1}$ (we use here that $i>2$).
	We get $p=\pieces{u} = \alpha +\beta +1$ (the terms $\varphi^{\alpha}(a_{i-1})\varphi^\beta(a_{i-1}^{-1})$ merge into one piece).
	We also, from above, have
	$$\varphi^r(u) \  = \  \varphi^{r-1}(a_{i-1}^{-1}) \cdots \varphi^{\alpha+1}(a_{i-1}^{-1}) \varphi^{\beta+1}(a_{i-1}) \cdots \varphi^{r-1}(a_{i-1}),$$
	which has to cancel down to $p$ pieces.
	Since $i>2$, and $\alpha \ne \beta$, this cancels down to $2r-\alpha-\beta -3 = 2r- p - 2$ pieces. Since $r>p+1$, we get a contradiction. 
	 \end{proof}

The following argument is well known (cf.\ Lemmas~3.5 and 5.6 in \cite{BR1} and Example~3.3 \cite{Bridson2}).

\begin{lemma}\label{lem:growth of a_i}
	Let $i \geq 2$.
	There exist  $C_i,D_i >0$ such that for all $r \in \Z \ssm \set{0}$,
			$$C_i \abs{r}^{i-1}  \ \leq \  \abs{ \varphi^r (a_i) }_F \ \leq  \ D_i \abs{r}^{i-1}.$$
\end{lemma}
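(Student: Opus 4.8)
The plan is to prove both bounds by induction on $i$, tracking how $\varphi^r(a_i)$ is built from $\varphi^r(a_{i-1})$. The key structural fact is the recursion $\varphi^r(a_i) = \varphi^{r-1}(a_i)\,\varphi^{r-1}(a_{i-1})$ for $r \geq 1$, which unwinds (using $\varphi(a_1)=a_1$ at the bottom) to
\[
\varphi^r(a_i) \ = \ a_i\, a_{i-1}\, \varphi(a_{i-1})\, \varphi^2(a_{i-1}) \cdots \varphi^{r-1}(a_{i-1}) \qquad (r \geq 1).
\]
By Lemma~\ref{lem:pieces exercise} applied to the rank-$i$ word $a_i \cdots$ — or more directly, by observing that each $\varphi^k(a_{i-1})$ has rank $i-1$ and the single letter $a_i$ has rank $i$ — there is no cancellation between these blocks, so the right-hand side is freely reduced and
\[
\abs{\varphi^r(a_i)}_F \ = \ 1 + \sum_{k=0}^{r-1} \abs{\varphi^k(a_{i-1})}_F.
\]
For $r<0$ one uses instead the positivity of $\varphi^{-1}$ with respect to the basis $b_1,\dots,b_m$ from \eqref{eq:phi inverse positive}: writing $\psi = \varphi^{-1}$, the analogous recursion $\psi^r(b_i) = \psi^{r-1}(b_i)\,\psi^{r-1}(b_{i-1})$ (read off from \eqref{eq:phi inverse positive}, noting the last letter of $\psi(b_i)$ is $b_{i-1}$) gives the same telescoping sum with $b$'s in place of $a$'s, and $\abs{\cdot}_F$ is unchanged under the change of basis $a_i \mapsto b_i$ since it is a permutation-of-generators-with-inverses relabelling. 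So it suffices to treat $r>0$, and then replace $r$ by $|r|$.

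Now I would run the induction. The base case $i=2$: here $\varphi^k(a_1) = a_1$, so $\abs{\varphi^r(a_2)}_F = 1 + r$ exactly, giving $C_2 = D_2 = 1$ (with $|r| \geq 1$ so $1+|r| \asymp |r|$; concretely $|r| \leq 1+|r| \leq 2|r|$). For the inductive step, assume $C_{i-1}|k|^{i-2} \leq \abs{\varphi^k(a_{i-1})}_F \leq D_{i-1}|k|^{i-2}$ for all $k \neq 0$. Summing over $k = 1, \dots, r-1$ and using the integral comparison $\sum_{k=1}^{r-1} k^{i-2} \asymp r^{i-1}$ — precisely, $\int_0^{r-1} x^{i-2}\,dx \leq \sum_{k=1}^{r-1}k^{i-2} \leq \int_1^{r} x^{i-2}\,dx$ for $i \geq 3$ — yields constants $C_i, D_i$ (depending only on $C_{i-1}, D_{i-1}, i$) with $C_i r^{i-1} \leq \abs{\varphi^r(a_i)}_F \leq D_i r^{i-1}$ for all $r \geq 2$, and one absorbs the finitely many small values $r=1$ (and the additive $+1$) by shrinking $C_i$ and enlarging $D_i$.

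The main obstacle — really the only non-bookkeeping point — is justifying the no-cancellation claim that turns the recursion into a clean additive formula for $\abs{\varphi^r(a_i)}_F$; once that is in hand (and it follows immediately from Lemma~\ref{lem:pieces exercise0} and Lemma~\ref{lem:pieces exercise}, since $a_i$ is a rank-$i$ piece and the $\varphi^k(a_{i-1})$ are rank-$(i-1)$ pieces, so the word $a_i \varphi^0(a_{i-1}) \cdots \varphi^{r-1}(a_{i-1})$ is its own rank-$i$ decomposition with no merging at any junction) everything else is a routine induction plus the standard estimate $\sum_{k=1}^{n} k^{d} \simeq n^{d+1}$. A minor secondary point is handling $r<0$ cleanly via the $b$-basis, but this is purely formal since changing basis by $a_i \mapsto b_i = a_i^{\pm 1}$ does not affect word length over $\{a_1,\dots,a_m\}$.
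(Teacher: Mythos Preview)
Your argument for $r>0$ is correct and is a genuinely different route from the paper's. The paper computes $|\varphi^r(a_i)|_F$ exactly via the abelianisation: since $\varphi^r(a_i)$ is a positive word, its length equals the $\ell^1$-norm of the corresponding column of $\Phi^r$, where $\Phi$ is the unipotent Jordan block, giving $|\varphi^r(a_i)|_F=\sum_{j=0}^{i-1}\binom{r}{j}\sim r^{i-1}$. Your inductive approach via $|\varphi^r(a_i)|_F = 1+\sum_{k=0}^{r-1}|\varphi^k(a_{i-1})|_F$ is cleaner in that it avoids matrix algebra, at the cost of producing less explicit constants. Both rely on positivity to avoid cancellation.

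The gap is in your treatment of $r<0$. You assert the recursion $\psi^r(b_i)=\psi^{r-1}(b_i)\,\psi^{r-1}(b_{i-1})$, justified by ``the last letter of $\psi(b_i)$ is $b_{i-1}$''. But this recursion would require $\psi(b_i)=b_ib_{i-1}$, which is false: from \eqref{eq:phi inverse positive} the word $\psi(b_i)$ has length $i$, not $2$, for $i\ge 2$ (e.g.\ $\psi(b_3)=b_3b_1b_2$, $\psi(b_4)=b_3b_1b_2b_4$). Moreover the ``last letter'' claim itself fails for even $i$: $\psi(b_{2k})$ ends in $b_{2k}$, not $b_{2k-1}$. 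So neither the premise nor the conclusion holds, and the telescoping sum you want does not follow.

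The fix is easy, and stays within your inductive framework. Since $\psi(b_i)$ is a positive word containing each of $b_1,\dots,b_i$ exactly once, positivity gives
\[
|\psi^r(b_i)|_F \ = \ \sum_{j=1}^{i}|\psi^{r-1}(b_j)|_F,
\]
and an induction on $i$ (now with a full triangular system rather than a two-term recursion) still yields $|\psi^r(b_i)|_F\asymp r^{i-1}$. Alternatively, one can use the reduced-word identity $\varphi^{-r}(a_i)=a_i\,\varphi^{-1}(a_{i-1})^{-1}\cdots\varphi^{-r}(a_{i-1})^{-1}$ for $r>0$ (this is Lemma~\ref{lem:fbc image of a_i}), which gives exactly the same additive formula $|\varphi^{-r}(a_i)|_F=1+\sum_{k=1}^{r}|\varphi^{-k}(a_{i-1})|_F$ you wanted and lets your induction go through verbatim.
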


\begin{proof} 
	First we address the case $r>0$.
	Since $\varphi^r(a_i)$ is a positive word in this case, its length is the same as its length in the abelianisation of $F$.
	With respect to the basis $\{a_1,\dots,a_m\}$,
	the action of $\varphi$ on the abelianisation is via the matrix $\Phi$ 	with ones on the diagonal and immediately above, and zeros elsewhere.
	Direct calculation yields upper triangular matrices:
		\[
		\Phi  \ =  \ \begin{pmatrix}
		1 & 1 &  &  &   &   \\
		 & 1 & 1 &  &   &   \\
		  &  & \ddots & \ddots   & & \\
		  &  &  & \ddots  & \ddots   & \\
		  & &  &  & 1 & 1 \\
		 & &  &  &  & 1
		\end{pmatrix}, \  \qquad \  
	\begingroup
	\renewcommand*{\arraystretch}{1.5}
		\Phi^r \ = \  \begin{pmatrix}
		1 & {r \choose 1} & {r\choose 2} & \cdots & {r \choose m-1} \\
		 & 1 & {r \choose 1} & \cdots & {r \choose m-2} \\
		& &  \ddots  & & \vdots \\
		& & & 1  & {r \choose 1} \\
		 & & & & 1
		\end{pmatrix},
	\endgroup
		\]
where  ${r \choose j}$ is understood to be $0$ for $j >r$.

So	$$ \abs{\varphi^r(a_i)}_F \ = \  \sum_{j=0}^{i-1} {r \choose j},$$
which, as a function of $r\in\N$,
 is Lipschitz equivalent to ${r \choose {i-1}}\sim r^{i-1}$.

\iffalse
	Hence  
	$$\abs{\varphi^r(a_i)}_F \ \geq \  {r\choose i-1}  \ = \  \frac{(r)(r-1) \cdots (r-i+2)}{(i-1)!}  \ \geq \  \frac{(r-i+2)^{i-1}}{(i-1)!}.$$ So if   $r-i+2 \geq r/2$ (that is, $r \geq 2(i-2)$), then  $$\abs{\varphi^r(a_i)}_F \ \geq \  \frac{1}{2^{i-1}(i-1)!}r^{i-1}.$$  For the finitely many $r$ with  $0 < r < 2(i-2)$, clearly $\abs{\varphi^r(a_i)}_F$ is at least $r^{i-1}$ times a   constant which depends only on $i$.  
	
	For the upper bound, either $r<2i$ and we have that $\abs{\varphi^r(a_i)}_F \leq 2^{2i}$, else
	$$\abs{\varphi^r(a_i)}_F \ \leq \  i {r\choose i-1} \ \leq \  \frac{i}{(i-1)!} r^{i-1}.$$
	\fi

To deal with negative powers, we use the fact that $\varphi^{-1}$
is a positive automorphism with respect to the basis
$\{b_1,\dots,b_m\}$ described in Section~\ref{subsec:positive inverse}.
With respect to this basis, the action of $\varphi^{-1}$ on the abelianisation of
$F$ is given by
\iffalse
In order to 
	Now we consider negative powers: that is, $\varphi^{-r}$ where $r >0$.
	Using the expression for $\varphi^{-1}$ from equation \eqref{eq:phi inverse}, 
	we observe that if we change the generating set of $F$ to $\cal X = \{a_1,a_2^{-1},a_3,a_4^{-1},\ldots,a_m^{\pm1}\}$ 
	then the image of every generator in $\cal X$ is a positive word on $\cal X$.
	Here, alongside its $r$-th power, is the  matrix $\Psi$ of  $\varphi^{-1}$   acting  on the abelianisation of $F$ with respect to the basis given by the image of $\cal X$:
	\fi
	\[
	\Psi \ = \  \begin{pmatrix}
	1 & 1 & 1 & \cdots & \cdots  & 1  \\
	& 1 & 1 & \cdots & \cdots  & 1  \\
	&  & \ddots &    & & \vdots \\
	&  &  & \ddots  &    & \vdots \\
	& &  &  & 1 & 1 \\
	& &  &  &  & 1
	\end{pmatrix}, \ \qquad \ 
	\begingroup
	\renewcommand*{\arraystretch}{1.5} 
	\Psi^r \ = \  
	\begin{pmatrix}
	1 & {r \choose 1} & {r+1\choose 2} & \cdots & {r+m-2 \choose m-1} \\
	 & 1 & {r \choose 1} & \cdots & {r+m-3 \choose m-2} \\
	& &  \ddots  & & \vdots \\
	& & & 1  & {r \choose 1} \\
	& & & & 1
	\end{pmatrix}.
	\endgroup
	\]
	So 	$$ \abs{\varphi^r(a_i)}_F \ = \  
	  \sum_{j=0}^{i-1} {r+j-1 \choose j} 
	  $$
	which, as a function of $r\in\N$, is Lipschitz equivalent to ${{r+i-2}
	 \choose {i-1}}\sim r^{i-1}$.
\iffalse
	For the upper bound, if $r<2i$ then, as above, we have  $\abs{\varphi^r(a_i)}_F \leq 2^{2i}$. For larger $r$ we get
	\[
	\abs{\varphi^{-r}(a_i)}_F 
	\ \leq \  i {r+i-1 \choose i-1} 
	\ \leq \  i \frac{(r+i-1)^{i-1}}{(i-1)!} 
	\ \leq \    \frac{i \left(\frac{3}{2}\right)^{i-1}}{(i-1)!} r^{i-1}.
	\]
\fi
\end{proof}

A straight-forward induction (cf.\ \cite[Lemma 7.1]{DR}) gives:

\begin{lemma}\label{lem:fbc image of a_i}  For $i =2, \ldots, m$, 
\[\varphi^r(a_i) \ = \  \begin{cases} 
a_i \,  a_{i-1}  \varphi(a_{i-1}) \cdots \varphi^{r-1}(a_{i-1})  & \textrm{ when } r > 0 \\ 
 a_i \varphi^{-1}(a_{i-1})^{-1} \varphi^{-2}(a_{i-1})^{-1} \cdots \varphi^{r}(a_{i-1})^{-1} & \textrm{ when }  r<0. 
 \end{cases} \]
Furthermore, these expressions are reduced words.
  \end{lemma}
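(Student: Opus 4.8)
The plan is to establish both identities by an easy induction on $|r|$ carried out entirely inside $F$, and then to deduce that the resulting expressions are already reduced from the positivity of $\varphi$ (in the case $r>0$) and of $\varphi^{-1}$ with respect to the basis $b_1,\dots,b_m$ (in the case $r<0$).

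For $r>0$ I would induct on $r$. The base case $r=1$ is just the defining relation $\varphi(a_i)=a_ia_{i-1}$, which agrees with the claimed formula because the product $a_{i-1}\varphi(a_{i-1})\cdots\varphi^{r-1}(a_{i-1})$ reduces to the single factor $a_{i-1}$ when $r=1$. For the inductive step, assuming $\varphi^r(a_i)=a_i\,a_{i-1}\varphi(a_{i-1})\cdots\varphi^{r-1}(a_{i-1})$ holds in $F$, apply $\varphi$ and use $\varphi(a_i)=a_ia_{i-1}$ to obtain $\varphi^{r+1}(a_i)=a_i\,a_{i-1}\varphi(a_{i-1})\cdots\varphi^{r}(a_{i-1})$, which is precisely the formula for $r+1$; note that this step is an identity of elements of $F$ and needs no appeal to reduction. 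Dually, for $r<0$ I would induct on $|r|$: the base case $r=-1$ follows by applying $\varphi^{-1}$ to $\varphi(a_i)=a_ia_{i-1}$, giving $\varphi^{-1}(a_i)=a_i\,\varphi^{-1}(a_{i-1})^{-1}$, and the inductive step applies $\varphi^{-1}$ to the formula for $r$ and uses the base case to produce the formula for $r-1$.

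For the reducedness claim, the case $r>0$ is immediate: $\varphi$ is a positive automorphism, so each of $a_i,\,a_{i-1},\,\varphi(a_{i-1}),\dots,\varphi^{r-1}(a_{i-1})$ is a positive word on $a_1,\dots,a_m$, hence so is their concatenation, and a positive word is automatically reduced. For $r<0$ I would pass to the basis $b_1,\dots,b_m$ of Section~\ref{subsec:positive inverse}, with respect to which $\varphi^{-1}$ --- and hence every $\varphi^{-k}$ with $k\ge1$ --- is positive. Writing $a_j=b_j^{\epsilon_j}$ with $\epsilon_j=(-1)^{j+1}$ and using the identity $\epsilon_{i-1}=-\epsilon_i$, one checks that $a_i=b_i^{\epsilon_i}$ and that each factor satisfies $\varphi^{-k}(a_{i-1})^{-1}=\bigl(\varphi^{-k}(b_{i-1})\bigr)^{\epsilon_i}$ with $\varphi^{-k}(b_{i-1})$ a positive word on $b_1,\dots,b_m$. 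Thus the whole expression is either a positive $b$-word (if $\epsilon_i=1$) or a negative $b$-word (if $\epsilon_i=-1$), and in either case it is reduced; since relabelling letters $b_j\leftrightarrow a_j^{\pm1}$ preserves reducedness, the expression is reduced as a word on $a_1,\dots,a_m$ as well.

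There is no genuine obstacle here --- this is the straight-forward induction advertised in the statement --- and the only point that rewards a moment's care is the reducedness of the $r<0$ expression, which is handled most cleanly by changing to the basis in which $\varphi^{-1}$ is positive rather than by trying to track cancellation between consecutive factors $\varphi^{-k}(a_{i-1})^{-1}$ by hand. One could instead note, via Lemma~\ref{lem:pieces exercise0}, that every $\varphi^{\pm k}(a_{i-1})$ has rank $i-1$ so that the leading $a_i^{\pm1}$ cannot cancel into the rest, but internal cancellation would still need ruling out, so the positivity argument is preferable.
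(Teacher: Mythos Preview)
Your proof is correct and follows essentially the same approach as the paper: a straightforward induction on $|r|$ for the identities, positivity of $\varphi$ for reducedness when $r>0$, and positivity of $\varphi^{-1}$ with respect to the basis $b_j=a_j^{(-1)^{j+1}}$ for reducedness when $r<0$. The paper records only the last of these points explicitly, but your write-up spells out the same argument in more detail.
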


One sees that 
the second expression is reduced by appealing again to the fact
$\varphi^{-1}$ is positive with respect to the basis $\{a_1 , a_2^{-1}, a_3 , \ldots , a_m^{\pm 1}\}$.

\begin{prop}\label{prop:lower bound distortion}
Suppose $i \in \set{2, \ldots, m}$ and $\pi$ is a piece of
strict rank-$i$ that is not fixed by $\varphi$.  Then $|\varphi^r(\pi)|_F \sim \abs{r}^{i-1}$.  More precisely,  for   the constants  $C_i, D_i>0$ of Lemma~\ref{lem:growth of a_i}, for all $r \neq 0$, 
	\[\abs{\varphi^r(\pi)}_F 
	 \ \leq \  D_i \abs{r}^{i-1}\abs{\pi}_F\]
and when $\abs{r}^{i-1} \geq \frac{1}{C_i}\abs{\pi}_F$, 
$$\abs{\varphi^{r}(\pi)}_F \ \geq \ \left(C_i^{\frac{1}{i-1}}\abs{r}- {\abs{\pi}_F} ^{\frac{1}{i-1}} \right)^{i-1}.$$
\end{prop}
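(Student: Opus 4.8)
The plan is to reduce the statement about an arbitrary strict rank-$i$ piece $\pi$ to the already-established growth estimates for $\varphi^r(a_i)$ in Lemma~\ref{lem:growth of a_i}. First I would set up notation: write $\pi$ in one of the three strict types $a_i u$, $u a_i^{-1}$, or $a_i u a_i^{-1}$ with $u$ of rank at most $i-1$. By Lemma~\ref{lem:pieces exercise0}, $\varphi^r(\pi)$ is again a piece of the same type and rank, so it contains exactly as many occurrences of $a_i^{\pm 1}$ as $\pi$ does (namely one or two), and the `$u$-part' of $\varphi^r(\pi)$ is $\varphi^r$ applied to the $u$-part of $\pi$. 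The key structural point is that applying $\varphi^r$ to $\pi$ does not interact at all with those boundary $a_i^{\pm1}$ letters: $\varphi^r$ fixes the subgroup $\langle a_1,\dots,a_{i-1}\rangle$ setwise in the sense relevant here, and more importantly $\varphi^r(a_i)=a_i\cdot(\text{rank-}(i-1)\text{ word})$ by Lemma~\ref{lem:fbc image of a_i}, so the leading $a_i$ (resp.\ trailing $a_i^{-1}$) survives and the growth is entirely carried by the rank-$(i-1)$ tail. I would therefore argue that $|\varphi^r(\pi)|_F$ is, up to additive bounded error coming from the at most two boundary letters, comparable to $|\varphi^r(\text{tail words of rank } i-1)|_F$.

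For the \emph{upper bound}: each of the at most two occurrences of $a_i^{\pm1}$ in $\pi$ contributes, under $\varphi^r$, a word of length $|\varphi^{\operatorname{sgn}(r)\cdot|r|}(a_i)|_F \le D_i|r|^{i-1}$ by Lemma~\ref{lem:growth of a_i}; more precisely, by Lemma~\ref{lem:fbc image of a_i} each $a_i^{\pm1}$ in $\pi$ becomes $a_i$ followed (or preceded) by a concatenation $\varphi^k(a_{i-1})^{\pm1}$ for $0\le |k|<|r|$, and each rank-$(i-1)$ letter of $u$ grows to at most $D_{i-1}|r|^{i-2}\le D_i|r|^{i-1}$ many letters (after adjusting constants so $D_i$ dominates). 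Since $\pi$ has $|\pi|_F$ letters in total and $\varphi^r(\pi)$ is freely reduced (Lemma~\ref{lem:pieces exercise}), summing the contributions gives $|\varphi^r(\pi)|_F\le D_i|r|^{i-1}|\pi|_F$, as claimed. I would be slightly careful that the constant $D_i$ from Lemma~\ref{lem:growth of a_i} is chosen (or can be enlarged) to absorb $D_{i-1}$ and the bounded boundary terms; this is a routine constant-chasing step.

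For the \emph{lower bound}: here I use that $\pi$ contains at least one letter $a_i^{\pm1}$ whose $\varphi^r$-image has length $|\varphi^r(a_i)|_F\ge C_i|r|^{i-1}$ by Lemma~\ref{lem:growth of a_i}. Because $\varphi^r(\pi)$ is freely reduced and the $\varphi^r$-image of that particular $a_i^{\pm1}$ appears as a subword of $\varphi^r(\pi)$ that can only be partially cancelled at its two ends by the $\varphi^r$-images of the neighbouring letters, I get
$$|\varphi^r(\pi)|_F \ \ge\ |\varphi^r(a_i)|_F - 2\bigl(|\varphi^r(\pi)|_F - |\varphi^r(a_i)|_F\bigr)$$
— no, better: I would instead use that the whole word $\pi$ has at most $|\pi|_F$ letters, so the `competing' material that could cancel against $\varphi^r(a_i)$ has length at most $(|\pi|_F-1)\cdot D_{i-1}|r|^{i-2}$, whence $|\varphi^r(\pi)|_F\ge C_i|r|^{i-1}-|\pi|_F\cdot D_{i-1}|r|^{i-2}$. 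To get exactly the stated inequality I would instead feed in the sharper form: since $\pi$ is obtained from a single piece and the tail $u$ has at most $|\pi|_F-1$ letters each contributing a string that, by Lemma~\ref{lem:fbc image of a_i}, is a sub-concatenation of $\{\varphi^k(a_{i-1})\}_{|k|<|r|}$, one bounds the total cancellable length and then uses the elementary inequality that, writing $L=C_i^{1/(i-1)}|r|$ and $M=|\pi|_F^{1/(i-1)}$, one has $L^{i-1}-(\text{cancellable})\ge (L-M)^{i-1}$ whenever $|r|^{i-1}\ge C_i^{-1}|\pi|_F$ (which guarantees $L\ge M$ so the right side is nonnegative). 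The binomial expansion of $(L-M)^{i-1}$ produces exactly the error terms that match the worst-case cancellation count, so this reduces to checking one scalar inequality.

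The main obstacle I anticipate is the \emph{lower bound bookkeeping}: getting the cancellation estimate tight enough to land on the precise expression $\bigl(C_i^{1/(i-1)}|r|-|\pi|_F^{1/(i-1)}\bigr)^{i-1}$ rather than a looser $C_i|r|^{i-1}-(\text{stuff})|r|^{i-2}$ bound. This requires matching the combinatorics of how many $\varphi^k(a_{i-1})$-blocks the tail of $\pi$ can kill against the $\varphi^r(a_i)$-block, and recognizing that this count is exactly what the binomial expansion of $(L-M)^{i-1}$ encodes; the hypothesis $|r|^{i-1}\ge C_i^{-1}|\pi|_F$ is precisely what makes the subtracted quantity not overshoot. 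Everything else — the upper bound, the reduction to the tail via Lemmas~\ref{lem:pieces exercise0}, \ref{lem:pieces exercise}, \ref{lem:fbc image of a_i}, and the base estimates from Lemma~\ref{lem:growth of a_i} — is routine.
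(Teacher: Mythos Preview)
Your upper bound sketch is fine and matches the paper's one-line remark: bound each letter's image by $D_i|r|^{i-1}$ and sum.

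Your lower bound, however, has a genuine gap. The estimate you arrive at,
\[
|\varphi^r(\pi)|_F \ \ge\ C_i|r|^{i-1} - |\pi|_F\cdot D_{i-1}|r|^{i-2},
\]
is a valid inequality (it follows from $|\varphi^r(a_i u)|_F\ge|\varphi^r(a_i)|_F-|\varphi^r(u)|_F$), but it is \emph{strictly weaker} than the stated bound $(C_i^{1/(i-1)}|r|-|\pi|_F^{1/(i-1)})^{i-1}$. Expanding the latter gives a leading correction of order $|r|^{i-2}\,|\pi|_F^{1/(i-1)}$, not $|r|^{i-2}\,|\pi|_F$; your claim that the binomial expansion ``produces exactly the error terms that match the worst-case cancellation count'' is simply false. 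The two bounds differ by a factor of roughly $|\pi|_F^{(i-2)/(i-1)}$ in the correction term, so no amount of constant-adjustment will close the gap.

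The missing idea is that cancellation between $\varphi^r(a_i)$ and $\varphi^r(u)$ is far more constrained than ``bounded by the total length of $\varphi^r(u)$''. The paper argues iteratively: if there is cancellation between $\varphi(a_i)$ and $\varphi(u)$, then the entire first rank-$(i-1)$ piece $\rho_1$ of $u$ is consumed and $\varphi(\pi)=a_i\,\varphi(\rho_2\cdots\rho_p)$; repeating, one finds $k\le p$ such that $\varphi^k(\pi)=a_i\,\varphi^k(\rho_{k+1}\cdots\rho_p)$ with no further cancellation, whence $|\varphi^r(\pi)|_F\ge|\varphi^{r-k}(a_i)|_F\ge C_i(|r|-k)^{i-1}$. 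The crucial point is the bound on $k$: since $\varphi^k(a_i\rho_1\cdots\rho_k)=a_i$, one has $|\varphi^{-k}(a_i)|_F\le|\pi|_F$, so Lemma~\ref{lem:growth of a_i} gives $C_ik^{i-1}\le|\pi|_F$, i.e.\ $k\le(|\pi|_F/C_i)^{1/(i-1)}$. Substituting this into $C_i(|r|-k)^{i-1}$ yields exactly the stated form. This ``eventual stabilisation after $k\approx|\pi|_F^{1/(i-1)}$ steps'' is the mechanism that produces the fractional power of $|\pi|_F$, and it cannot be recovered from your single global cancellation estimate.
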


\begin{proof} 
	The upper bound follows via a simple induction argument on the rank $i$, taking $D_i$  from Lemma~\ref{lem:growth of a_i}.
	We therefore focus on the lower bound.
	
	We have $\pi = a_i^\delta u a_i^{-\delta'}$  a rank-$i$ piece with   $\delta,\delta' \in \{0,1\}$   not both zero and  $u$ a reduced word of  rank at most $i-1$.   We may assume  $\delta =1$, as otherwise we could replace $\pi$ by $\pi^{-1}$.
	
	\case{1} $\delta' =0$ and $\pi = a_i u$.
	
	Let $u=\rho_1 \cdots \rho_p$ be the piece decomposition of $u$.
	Our argument will be that any cancellation between the images of $a_i$ and $u$ under iterated applications of $\varphi$ or $\varphi^{-1}$ will occur within the first $p$ applications.
	After this, there is no further cancellation, so the length of $\pi$ will eventually have growth rate at least that of $a_i$ under iterated applications of $\varphi$ or $\varphi^{-1}$, which will lead  to the required lower bound.
	
	\case{1a} $r>0$.
	
	If the first letter of $u$ is $a_{i-1}$, then there is no cancellation between $\varphi^r(a_i)$ and $\varphi^r(u)$ for any $r>0$, since $\varphi^r(u)$ has first letter $a_{i-1}$ and $\varphi^r(a_i)$ is a positive word.

	So suppose, on the other hand,  that the first letter of $u$ is not $a_{i-1}$,  and that there is cancellation between $\varphi(a_i)$ and $\varphi(u)$.
	Then we may write $\rho_1 = va_i^{-\varepsilon}$, for $\varepsilon\in\{0,1\}$  and $v$ a word of rank at most $i-2$ (if $i=2$ then $v$ is the empty word).
	Then $\varphi(a_iva_{i-1}^{-\varepsilon}) = a_i a_{i-1} \varphi(v)a_{i-2}^{-\varepsilon}a_{i-1}^{-\varepsilon}$ (if $i=2$, we read $a_0$ as the empty word).
	But $\varphi(v)a_{i-2}^{-\varepsilon}$ has rank strictly less than $i-1$, so in order for there to be cancellation between $\varphi(a_i)$ and $\varphi(u)$ we must have $\varepsilon = 1$ and $a_{i-1} \varphi(v)a_{i-2}^{-1}a_{i-1}^{-1}=1$. 
	That is,
	$$
	\varphi(\pi) \ = \  a_i \varphi(\rho_2)\cdots \varphi(\rho_p).$$
	We repeat the argument, and conclude that after $k\leq p$ steps we will reach a situation where we have 
	$$
	\varphi^k(\pi) \ = \  a_i \varphi^k(\rho_{k+1})\cdots \varphi^k(\rho_p)$$
	and there will be no cancellation between images of $a_i$ and $\varphi^k(\rho_{k+1})$ on further applications of $\varphi$.
	(If $k=p$ we understand that $\rho_{k+1}\cdots \rho_p$ is the empty word.)
	Hence, for $r \geq k$, we may write
	$$
	\varphi^r(\pi) \ = \  \varphi^{r-k}(a_i) \varphi^r(\rho_{k+1} \cdots \rho_p)$$
	where there is no cancellation in the right-hand side.
	In particular, this gives
	$$
	\abs{\varphi^r(\pi)}_F \ \geq \  \abs{\varphi^{r-k}(a_i)}_F  \ \geq \  C_i (r-k)^{i-1}$$
	by Lemma~\ref{lem:growth of a_i}.
	
	We complete this case by bounding $k$.
	Indeed, since $\varphi^k(a_i\rho_1 \cdots \rho_k) = a_i$, we get $\abs{\varphi^{-k}(a_i)}_F \leq \abs{\pi}_F$.
	Then an application of Lemma~\ref{lem:growth of a_i} gives $C_ik^{i-1} \leq \abs{\pi}_F$, which implies the required lower bound of $\abs{\varphi^r(\pi)}_F$.
	
	\case{1b} Assume $r<0$.
	
	The argument is broadly similar to Case 1a.
	The key observation   is that there will be no cancellation between images of $a_i$ and $u$ under applications of $\varphi^{-1}$ whenever the first piece of $u$ is of the form $va_{i-1}^{-\delta}$, for $\delta \in \{0,1\}$.
	By induction, the last letter of $\varphi^{r}(a_i)$ as a reduced word on $\{a_1,\ldots,a_m\}$ is $a_{i-1}^{-1}$ for all $r<0$.   (The base case, $r=-1$, is from \eqref{eq:phi inverse}.)
	So the only way we can ever have cancellation between $\varphi^{r}(a_i)$ and $\varphi^{r}(u)$  is if $\varphi^r(\rho_1)$ begins with $a_{i-1}$.
	But since the type of a piece is preserved under applications of $\varphi$, if $\rho_1 = va_{i-1}^{-\delta}$, then this will never occur.
	
	So we may assume $\rho_1 = a_{i-1} v a_{i-1}^{-\delta}$, for $\delta \in \{0,1\}$ and $v$ is a word of rank at most $i-2$.
	Then 
	\[\varphi^{-1}(a_i\rho_1)
	\ = \  \varphi^{-1}(a_i a_{i-1})\varphi^{-1}(va_{i-1}^{-\delta})
	\ = \  a_i \varphi^{-1}(va_{i-1}^{-\delta})
	.\]
	In particular, if $\varphi^{-1}(va_{i-1}^{-\delta})\neq 1$ then further applications of $\varphi^{-1}$ will lead to no cancellation between the images of $a_i$ and of $\varphi^{-1}(va_{i-1}^{-\delta})$, and we can stop.
	Otherwise $\varphi^{-1}(va_{i-1}^{-\delta}) = 1$, which implies that $va_{i-1}^{-\delta} = 1$, and $\rho_1 = a_{i-1}$.
	This gives
	$$\varphi^{-1}(\pi)  \ = \  a_i \varphi^{-1}(\rho_2) \cdots \varphi^{-1}(\rho_p).$$
	Repeating this, we find for some $k\leq p$ that
	$$\varphi^{-k}(\pi)  \ = \  a_i\varphi^{-k}(\rho_{k+1})\cdots \varphi^{-k}(\rho_p)$$
	and there will be no cancellation between images of $a_i$ and $\varphi^{-k}(\rho_{k+1})$ after further applications of $\varphi^{-1}$. 
	(If $k=p$ we understand that $\rho_{k+1}\cdots \rho_p$ is the empty word.)
	As above we then yield, for $r<-k$
	$$\abs{\varphi^r(\pi)}_F  \ \geq  \  \abs{\varphi^{r+k}(a_i)}_F  \ \geq \  C_i(\abs{r}-k)^{i-1}$$
	by Lemma~\ref{lem:growth of a_i}, and $C_ik^{i-1}\leq \abs{\pi}_F$, completing this case.
	
	\case{2}   $\delta' = 1$  and $\pi = a_i u a_i^{-1}$.

	We can apply the arguments from Case 1 to both ends of $u$ and it is not hard to see that the same conclusion is reached.
	The key point  is that if $u$ is completely cancelled out under iterated applications of $\varphi^{\pm 1}$, then the cancellation cannot reach the middle of $u$ simultaneously, meaning that either after several applications of $\varphi$ we obtain $a_i\varphi^k(a_i^{-1})$, or $\varphi^k(a_i)a_i^{-1}$. The length of these grow as required on further applications of $\varphi^{\pm 1}$.
\end{proof}

\subsection{Tools for handling the distortion}\label{subsec:tools for distortion}
 
Recall that for $h \in H$,  $\abs{ h }_H$ denotes the length of the shortest word on $a_1^{\pm 1}, \ldots, a_m^{\pm 1}, s^{\pm 1}$ representing $h$ in $H$.

\begin{proposition}\label{prop:fbc subword length}
	   Suppose $h \in H$ has normal form $\tilde{u}s^p$.
	   If $u'$ is a subword of $\tilde{u}$, then $\abs{u'}_H \leq (2m+1) \abs{h}_H$.
\end{proposition}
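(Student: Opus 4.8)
The plan is to reduce the assertion to a bound on the $H$-lengths of \emph{prefixes and suffixes} of a normal-form word, and then to prove that bound by following how the normal form is assembled from a geodesic word, together with an induction on rank.

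First, write $n:=\abs{h}_H$ and fix a geodesic word $W$ for $h$, so $\ell(W)=n$. Moving every $s^{\pm1}$ in $W$ to the right, via $g\,s^{k}=s^{k}\varphi^{-k}(g)$ for $g\in F$, puts $h$ into normal form and shows that $\tilde{u}$ is the free reduction of the product $v_1v_2\cdots v_K$, where $a_{j_1}^{\varepsilon_1},\dots,a_{j_K}^{\varepsilon_K}$ are the $a$-letters of $W$ in order, $v_t:=\varphi^{-p_t}(a_{j_t}^{\varepsilon_t})$, and $p_t$ is the exponent sum of the $s$-letters of $W$ preceding the $t$-th $a$-letter; note $K\le n$, $\abs{p_t}\le n$ and $\abs{p}\le n$. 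Now let $u'$ be a subword of $\tilde{u}$, say $\tilde{u}=\alpha\,u'\,\beta$ as reduced words. Since $\tilde{u}$ represents $hs^{-p}$ in $H$, we get $u'=\alpha^{-1}(hs^{-p})\beta^{-1}$ in $H$, hence
\[
\abs{u'}_H \ \le \ \abs{\alpha}_H+\abs{\beta}_H+\abs{h}_H+\abs{p}\ \le\ \abs{\alpha}_H+\abs{\beta}_H+2n .
\]
So it suffices to bound, in terms of $\ell(V)$, the $H$-length of any prefix (and, by the mirror-image argument, any suffix) of the normal-form word of the element of $H$ represented by a word $V$.

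For prefixes I would induct on the number $K$ of $a$-letters in $V$ (taking $V=W$). Set $\tilde{u}_t:=$ the free reduction of $v_1\cdots v_t$; the same collecting computation shows $\tilde{u}_ts^{p_t}$ is the normal form of the element $h_{[t]}$ represented by the prefix of $W$ through its $t$-th $a$-letter, so $\abs{\tilde{u}_t}_H\le\abs{h_{[t]}}_H+\abs{p_t}\le 2n$. Since appending a factor $v_t$ and freely reducing only cancels letters at the right-hand end, the letters of $v_1,\dots,v_{K-1}$ that survive in $\tilde{u}=\tilde{u}_K$ form a prefix of $\tilde{u}_{K-1}$. Consequently a prefix $\alpha$ of $\tilde{u}$ is either a prefix of $\tilde{u}_{K-1}$ --- the normal-form word of $h_{[K-1]}$, which is represented by a no-longer word with one fewer $a$-letter, so the inductive hypothesis applies --- or else $\alpha=\tilde{u}_{K-1}\cdot\rho$ with $\rho$ a prefix of $v_K=\varphi^{-p_K}(a_{j_K}^{\varepsilon_K})$, in which case $\abs{\alpha}_H\le 2n+\abs{\rho}_H$. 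So everything rests on a bound for the $H$-length of a prefix (or suffix) of a word $\varphi^{-p}(a_j^{\pm1})$, in terms of $\abs{p}$.

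This last bound I would prove by induction on the rank $j$, using the explicit telescoping formula of Lemma~\ref{lem:fbc image of a_i}: for $p>0$, $\varphi^{-p}(a_j)=a_j\,\varphi^{-1}(a_{j-1})^{-1}\cdots\varphi^{-p}(a_{j-1})^{-1}$, and the initial segments collapse, $a_j\,\varphi^{-1}(a_{j-1})^{-1}\cdots\varphi^{-(l-1)}(a_{j-1})^{-1}=\varphi^{-(l-1)}(a_j)$. Thus any prefix of $\varphi^{-p}(a_j)$ has the form $\varphi^{-(l-1)}(a_j)\cdot\rho_{j-1}$ with $0\le l\le p$ and $\rho_{j-1}$ the inverse of a suffix of $\varphi^{-l}(a_{j-1})$; since $\varphi^{-(l-1)}(a_j)=s^{\,l-1}a_js^{-(l-1)}$ has $H$-length at most $2\abs{p}-1$, the rank-$(j-1)$ hypothesis closes the estimate, the base case $j=1$ being trivial as $\varphi^{-p}(a_1)=a_1$. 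The cases $p<0$, and suffixes in place of prefixes, are handled symmetrically using the other clause of Lemma~\ref{lem:fbc image of a_i} (and the identity $\varphi^{-r}(g)=s^{r}gs^{-r}$, which lets one fold products of length $\sim\abs{r}^{\,j-1}$ into $H$-words of length $O(\abs{r})$). The genuinely delicate point --- the one I expect to be the main obstacle --- is tracking the additive constants through these nested inductions: each descent in rank and each step of the collecting process contributes an $O(n)$ term, and one must check that they accumulate to no more than the stated $(2m+1)\abs{h}_H$ rather than merely to some linear function of $n$.
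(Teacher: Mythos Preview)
Your approach is sound and yields a linear bound $|u'|_H \le C(m)\,|h|_H$, which is all that is actually used later in the paper; but, as you yourself flag, it will not deliver the stated constant $2m+1$. Your reduction to prefixes and suffixes already costs $|\alpha|_H+|\beta|_H+2n$, and the rank induction on prefixes of $\varphi^{-p}(a_j)$ contributes an additive $O(n)$ at each of up to $m-1$ levels (moreover, the step alternates between prefixes and suffixes, since a prefix of $\varphi^{-p}(a_j)$ leaves you with a \emph{suffix} of $\varphi^{-l}(a_{j-1})$), so the constants pile up to something on the order of $8m$ rather than $2m+1$. This is not a fatal gap --- the proposition with a larger constant would serve equally well throughout the paper --- but it does mean your argument proves a weaker statement than the one written.

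The paper's proof takes a genuinely different and tighter route. Rather than pushing all $s^{\pm 1}$ to the right at once and then analysing prefixes of the long words $\varphi^{-p_t}(a_{j_t}^{\pm 1})$, it processes the geodesic one $s^{\pm 1}$ at a time: at step $i$ it shuffles a single $s^{\pm 1}$ across a block of $a$-letters (either cancelling a matched $s^{\mp 1}$ or pushing it to the end), producing a word $u_{i+1}$ from $u_i$. The key observation is local: given any subword $u'_{i+1}$ of $u_{i+1}$, there is a subword $u'_i$ of $u_i$ with $u'_{i+1}=\mu_{i+1}\,u'_i\,\lambda_{i+1}$ in $H$ and $\ell(\mu_{i+1}),\ell(\lambda_{i+1})\le m$, because the only discrepancy at each boundary is a fragment of some $\varphi^{\pm 1}(a_j)$, which has length at most $m$. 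Chaining these over at most $r\le n$ steps gives $|u'|_H\le 2rm+\ell(u'_0)\le (2m+1)n$ on the nose. The advantage of this step-by-step approach is that the error per step is controlled by $\max_j|\varphi^{\pm 1}(a_j)|_F=m$ rather than by the growth of high iterates $\varphi^{-p}(a_j)$, so no rank induction is needed and the constant comes out sharp. Your method, by contrast, has the merit of making explicit the structure of prefixes of $\varphi^{-p}(a_j)$ via Lemma~\ref{lem:fbc image of a_i}, which is of independent interest, at the cost of a looser constant.
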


\begin{remark}\label{r:corridors}
 This proposition does not rely on any special properties of $\varphi$: 
with a change of constant, it holds for any free-by-cyclic group $M=F\rtimes_\psi\Z$.
We shall sketch a geometric proof of this more general fact that will allow the reader
familiar with van Kampen diagrams to skip the algebraic proof that follows.
This geometric argument assumes that the reader is familiar with 
the use of $s$-corridors (as used in \cite{BridsonGroves}, for example).
 
Let $w$ be a shortest word in the generators $\{a_1,\dots,a_n,s\}$
that equals $h$ in $M$, and consider a least-area 
van Kampen diagram $\Delta$ with boundary label $w^{-1}\tilde{u}s^p$. 
This diagram is a union of its $s$-corridors 
and each point along the
side of a corridor is a distance at most $C$ from a point along the other side,
where $C$ is a constant that depends on $\psi$. 

Let
$x$ and $y$ be the endpoints of the arc in $\partial\Delta$
labelled $u'$. It suffices to argue 
that $x$ can be connected to $y$ by a path in the 1-skeleton of $\Delta$
that has length at most $2pC +|w|$. To construct such a path, 
observe that every vertex $z$ on the
arc of $\partial\Delta$ labelled $\tilde{u}$
can be connected to a vertex on the arc $A$ of $\partial\Delta$
labelled $w$ by crossing at most $p$ of the $s$-corridors, i.e.
the corridors emanating from the arc of $\partial\Delta$ labelled $s^p$. Thus
there is a path $\alpha_z$ of length at most $pC$
from $z$ that ends on $A$. The desired path from $x$ to $y$ is obtained
by following $\alpha_x$ then proceeding along $A$ to the endpoint
of $\alpha_y$ before returning along $\alpha_y$.
\end{remark}
 
To aid the intuition of readers who wish to persist with the algebraic proof,
we present an example.
 
\begin{Example}
	Suppose $h$ is represented by the word $u = sa_6a_5^{-1}s^{-2}a_5s^2a_3$.
	Advance the $s$ at the lefthand end  through   $u$ until it  cancels with  the first $s^{- 1}$,   applying  $\varphi^{- 1}$ to the letters $a_i^{\pm 1}$ it passes, to get 
	$$u_1 \  := \  (a_6a_4a_2a_1^{-1}a_3^{-1}a_5^{-1}) (a_4a_2a_1^{-1}a_3^{-1}a_5^{-1}) s^{-1}a_5s^2a_3$$
satisfying $u = u_1$ in $H$.  Then advance the $s^{-1}$ likewise until it cancels with the $s$ to get 	
		$$u_2 \  := \  (a_6a_4a_2a_1^{-1}a_3^{-1}a_5^{-1}) (a_4a_2a_1^{-1}a_3^{-1}a_5^{-1}) (a_5a_4) sa_3$$
	satisfying $u_1 = u_2$ in $H$.  Then advance the remaining $s$ to the right end to get 		
	$$u_3 \  := \  (a_6a_4a_2a_1^{-1}a_3^{-1}a_5^{-1}) (a_4a_2a_1^{-1}a_3^{-1}a_5^{-1}) (a_5a_4) (a_3 a_1a_2^{-1})$$ satisfying $u_2 = u_3s$ in $H$.  Then $u = u_3s$ in $H$, and $\tilde{u}$ is the reduced version of $u_3$.	Suppose $u' = a_3^{-1}a_4a_3$, a subword close to the right-hand end of $\tilde{u}$.
	Let $u_3' = a_3^{-1}a_5^{-1}a_5a_4a_3$, the subword of $u_3$ that freely reduces to $u'$.
	If we take $u_2' = a_3^{-1}a_5^{-1} a_5a_4$, then $u_3' = u_2' sa_3$; if we take
	$u_1' = a_3^{-1}a_5^{-1} s^{-1}a_5$, then $u_2' = u_1' s$; and if we take
	$u_0' = s^{-1}a_5$, then $u_1' =a_3^{-1}a_5^{-1}u_0'$.
	In particular, $u_0'$ is a subword of $u$, and it is obtained from $u_3'$ by pre- and post-multiplying by a   number (bounded by the exponent sum of $s$ in $u$, and therefore by $\abs{h}_H$ if $u$ is of minimal length) of short words (the images of letters $a_i^{\pm 1}$ under $\varphi^{\pm 1}$, with possibly an $s$ added at the beginning or end).   
	
	So there is a word that equals $u'$  in $H$ and whose length can be bounded from above by the length of a subword of $u$ plus the sum of the lengths of these short words.  The strategy of the following proof is to bound $\abs{u'}_H$ accordingly.
\end{Example}

\begin{proof}[Proof of Proposition~\ref{prop:fbc subword length}]
Suppose $u$ is a word on $a_1^{\pm 1}$, \ldots, $a_m^{\pm 1}, s^{\pm 1}$ representing $h$.
On account of the free-by-cyclic structure of $H$  and the fact that $u  =  \tilde{u}s^{p}$  in $H$, there is a sequence of words $u_0,  \ldots, u_r$ and integers $p_0, \ldots, p_r$  with the following properties (for all $i$):
\begin{itemize}
\item $r \leq \ell(u)$,
\item $u_0  =  u$ (as words) and $p_0=0$, 
\item $u_r$ freely reduces to $\tilde{u}$ and $p_r = p$,
\item $u =  u_i s^{p_i}$ in $H$ (and so the exponent sum of the $s^{\pm 1}$ in $u_i s^{p_i}$ is equal to that of $u$),
\item  there are letters $x_1, \ldots, x_n \in \set{a_1, a_1^{-1}, \ldots, a_m, a_m^{-1}}$  (which depend on $i$)   such that,  as words, either
\begin{enumerate}
\item   $u_{i} = \alpha_i s^{\pm 1} x_1 \cdots x_n s^{\mp 1} \beta_i$ and   $u_{i+1} = \alpha_i  \varphi^{\mp 1}(x_1) \cdots \varphi^{\mp 1}(x_n)   \beta_i$ 
 and $p_{i+1} = p_i$,  or
\item  $u_{i} = \alpha_i s^{\pm 1} x_1 \cdots x_n$ and     $u_{i+1} = \alpha_i  \varphi^{\mp 1}(x_1) \cdots \varphi^{\mp 1}(x_n)$
and $p_{i+1} = p_i \pm 1$, 
\end{enumerate}
for some words $\alpha_i$ and $\beta_i$.  
  (In the first case $u_i = u_{i+1}$ in $H$.  In the second, $u_i s^{\mp 1} = u_{i+1}$ in $H$.)   
  The words $u_i$ need not be reduced.
\end{itemize}

Suppose  $u'_{i+1}$ is a subword of $u_{i+1}$.  We claim that there is a subword $u'_{i}$  of $u_{i}$ and  there are  words $\mu_{i+1}$ and $\lambda_{i+1}$ with $\ell(\mu_{i+1}), \ell(\lambda_{i+1}) \leq m$ such that    $\mu_{i+1} u'_{i} \lambda_{i+1} = u'_{i+1}$ in $H$.    
The details of the proof of this depend on which of   cases (1) and (2)  applies and  how $u'_{i}$ is positioned in relation to the various subwords.  For instance suppose we are in the first case, so that  $u_{i} = \alpha_i s^{\pm 1} x_1 \cdots x_n s^{\mp 1} \beta_i$ and   $u_{i+1} = \alpha_i  \varphi^{\mp 1}(x_1) \cdots \varphi^{\mp 1}(x_n)   \beta_i$ and suppose $u'_{i+1} = \alpha'_i  \varphi^{\mp 1}(x_1) \cdots \varphi^{\mp 1}(x_j) \gamma$ where $\alpha'_i$ is a suffix of $\alpha_i$ and $\gamma$ is a prefix of    $\varphi^{\mp 1}(x_{j+1})$. 
Then taking $u'_i =  \alpha'_i  s^{\pm 1} x_1 \cdots x_j$, the result holds with $\mu_{i+1}$ the empty word and $\lambda_{i+1}= s^{\mp 1} \gamma$.  (The length of $\gamma$ is strictly less than  $\ell(\varphi^{\mp 1}(x_{j+1}))$, which  is at most $m$---see equation~\eqref{eq:phi inverse}.) %the description of $\varphi^{-1}(a_{j+1})$ at the start of this section.) 
The other cases are similar.

Take $u'_r$ to be a subword of $u_r$ which freely reduces to $u'$.  As per the previous paragraph obtain $u'_{r-1}$, \ldots, $u'_0$ and $\mu_r, \ldots, \mu_1$  and   $\lambda_r, \ldots, \lambda_1$ such that $\mu_r \cdots \mu_1 u'_0 \lambda_1 \cdots \lambda_r = u'_r = u'$ in $H$.    But $\ell( \mu_r \cdots \mu_1 u'_0 \lambda_1 \cdots \lambda_r ) \leq (2m+1) \ell(u)$  since $u'_0$ is a subword of $u$, $r \leq \ell(u)$ and  $\ell(\mu_{i}), \ell(\lambda_{i}) \leq m$ for all $i$.   

So when $u$ is a minimal length word representing $h$ in $H$, we get our result.
\end{proof}

We introduce some notation. If $w$ is the reduced word representing $h \in F$, then $i= \rank(w)$ is the  maximum $i$ such that there is a letter $a_i^{\pm 1}$ in $w$, and then $\pieces{h}$ denotes the number of pieces in the rank-$i$ piece decomposition of $w$.   Further, for a word $w$ and a letter $a$,  the
number of occurrences  of  $a$ in $w$ plus the 
number of  $a^{-1}$ is  $\wt_a(w)$, and  the number of occurrences of $a$ minus the number of $a^{-1}$ is  $\exp_{a}(w)$.  

\begin{lemma}\label{lem:piece bound}
	Suppose $h \in H$ is expressed in  normal form as $\tilde{u}s^r$, where $\tilde{u}$ is a reduced word on $a_1^{\pm 1}, \ldots, a_m^{\pm 1}$ and $r \in \Z$. 	Then   
	$\pieces{\tilde{u}} \leq \abs{h}_H$.   
\end{lemma}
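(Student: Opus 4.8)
The plan is to bound the number of pieces of $\tilde u$ by relating it to the word length of $h$ in $H$ through a counting argument on a minimal-length word representing $h$. First I would take $u$ to be a geodesic word on $\{a_1^{\pm1},\dots,a_m^{\pm1},s^{\pm1}\}$ representing $h$, so $\ell(u)=\abs{h}_H$, and recall from the proof of Proposition~\ref{prop:fbc subword length} that there is a sequence of words $u_0=u,u_1,\dots,u_r$ with $u_r$ freely reducing to $\tilde u$, where at each step one shuttles an $s^{\pm1}$ past a block of $a$-letters, replacing each $a_j^{\pm1}$ passed by $\varphi^{\mp1}(a_j^{\pm1})$, a word of rank strictly less than $j$. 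The key observation is that applying $\varphi^{\pm1}$, or inserting/deleting an $s^{\pm1}$, or freely reducing, never \emph{increases} the number of pieces of the underlying $a$-word in a way that isn't already accounted for. More precisely, I want an invariant that decreases or is controlled along this process.

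The cleanest route is to track, for the word $u_i$, the quantity $N(u_i) :=$ the number of maximal $a$-syllables (maximal $s$-free subwords) of $u_i$, together with $\ell(u_i)$. Here is the heart of the matter: if $w$ is an $s$-free word on the $a_j^{\pm1}$, then $\pieces{w} \le \wt_{a_k}(w)+1$ where $k=\rank(w)$, since each strict-rank-$k$ piece consumes at least one occurrence of $a_k^{\pm1}$, and there is at most one additional rank-$k$ piece of type $u$ (the trailing low-rank piece). Moreover $\wt_{a_k}$ of a word is unaffected by free reduction, and — crucially — $\wt_{a_k}(\varphi^{\pm1}(v))\le \wt_{a_k}(v)$ whenever $v$ has rank at most some $j$, because $\varphi^{\pm1}$ only introduces letters $a_i^{\pm1}$ with $i<j$ and leaves the top-rank letters' count... wait — $\varphi(a_j)=a_ja_{j-1}$ does not change $\wt_{a_j}$, and $\varphi$ fixes all $a_i$ with $i\le j-1$ up to lower-rank corrections that never produce an $a_j$. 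So the total count of occurrences of $a_m^{\pm1}$ (or more generally of $a_k^{\pm1}$ for the eventual top rank $k$ of $\tilde u$) across all $a$-syllables of $u_i$ is non-increasing along the shuttling process, hence bounded by $\wt_{a_k}(u)\le \ell(u)=\abs{h}_H$. Then
$$
\pieces{\tilde u}\ \le\ \wt_{a_k}(\tilde u)+1\ \le\ \wt_{a_k}(u)+1\ \le\ \abs{h}_H+1,
$$
and a small optimization (noting that a geodesic word for a nontrivial $h$ with $\tilde u\ne 1$ actually contains at least one $a_k^{\pm1}$ strictly, or handling the trivial/degenerate cases $\tilde u=1$ directly) removes the "$+1$" to give $\pieces{\tilde u}\le\abs{h}_H$.

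The main obstacle I anticipate is making the monotonicity of $\wt_{a_k}$ along the shuttling process fully rigorous when the current top rank of the word $u_i$ exceeds $k=\rank(\tilde u)$: an $a$-letter of rank $>k$ present in an intermediate $u_i$ could, under $\varphi^{\mp1}$, contribute lower-rank letters including some $a_k^{\pm1}$, so I cannot simply say $\wt_{a_k}$ is non-increasing. The fix is to instead track $\sum_{j\ge k}\wt_{a_j}(\cdot)$ (the total weight in ranks $\ge k$), which \emph{is} non-increasing because $\varphi^{\pm1}(a_j)$ for $j>k$ still has exactly one $a_j$ and introduces only strictly-lower-rank letters — so no rank-$\ge k$ letter is ever created, only possibly annihilated in free reduction — and $\pieces{\tilde u}$ for a rank-$k$ word is bounded by $\wt_{a_k}(\tilde u)+1\le \sum_{j\ge k}\wt_{a_j}(\tilde u)+1\le \sum_{j\ge k}\wt_{a_j}(u)+1\le \ell(u)+1$. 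Closing the gap between $\ell(u)+1$ and the sharp bound $\abs{h}_H$ then just needs the elementary remark that if $\pieces{\tilde u}\ge 2$ then $\tilde u$ is nonempty of rank $k\ge 1$ and any word equal to $h$ must involve some $a_k^{\pm1}$ together with enough additional letters (or $s$'s, contributing the $+1$'s worth of slack), which I would dispatch by a short direct case check rather than a general principle.
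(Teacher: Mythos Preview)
Your monotonicity claim for $\sum_{j\ge k}\wt_{a_j}(\cdot)$ is false, and this is the crux of the argument, so the proposal has a genuine gap. You assert that $\varphi^{\pm 1}(a_j)$ for $j>k$ ``introduces only strictly-lower-rank letters --- so no rank-$\ge k$ letter is ever created''. But ``rank strictly lower than $j$'' does not mean ``rank $<k$'': letters of rank in $[k,j-1]$ are both. Concretely, take $m\ge 3$, $k=2$, and consider the letter $a_3$. We have $\varphi^{-1}(a_3)=a_3a_1a_2^{-1}$, so $\sum_{j\ge 2}\wt_{a_j}$ jumps from $1$ to $2$ under a single shuttle move. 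More generally, $\varphi^{-1}(a_j)$ contains $a_{j-1}^{-1}$, so every shuttle of $s$ past an $a_j$ with $j>k$ creates a fresh rank-$(j-1)\ge k$ letter. Thus your invariant is not an invariant, and the chain of inequalities collapses at the second step.

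The paper's proof circumvents exactly this obstacle. Rather than trying to control rank-$k$ weight in a word that still contains higher-rank letters, it first \emph{eliminates} all letters of rank $>k=\rank(\tilde u)$. The point is that any $a_j^{\pm 1}$ with $j>k$ in a geodesic word for $h$ must eventually cancel in the normal form, so it is bracketed with a matching $a_j^{\mp 1}$ inside a ``superfluous'' subword $\sigma$; the paper replaces each such $\sigma$ by an explicit equivalent word $\sigma'$ involving only $s^{\pm 1}$ and $a_{j-1}^{\pm 1}$, and checks that the total $a$-weight plus $s$-weight does not increase. Iterating from rank $m$ down to rank $k$ produces a word $v_k$ of rank $k$ with $\sum_{j\le k}\wt_{a_j}(v_k)+\wt_s(v_k)\le |h|_H$; writing $v_k$ in syllables $w_0 s^{\alpha_1} w_1\cdots s^{\alpha_l}w_l$ and using that $\pieces{\varphi^{\beta}(w_j)}=\pieces{w_j}\le \ell(w_j)$ then yields the exact bound $\pieces{\tilde u}\le |h|_H$ without any ``$+1$'' to dispose of. Your first observation $\pieces{\tilde u}\le \wt_{a_k}(\tilde u)+1$ is correct but is not the route the paper takes, and I do not see how to salvage your approach without something equivalent to this rank-reduction step.
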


\begin{proof}
Let $v_m$ be a geodesic word on $a_1^{\pm 1}, \ldots, a_m^{\pm 1}, s^{\pm 1}$ representing $h$ in $H$.  So $\ell(v_m) = \abs{h}_H$.

The \emph{shuffling} moves $sa_i \mapsto   \varphi^{-1}(a_i) s$  and  $s^{-1}a_i \mapsto \varphi(a_i) s^{-1}$ transform  $v_m$ to a word $u_m s^r$ 
so that $\tilde{u}$ is the freely reduced form of $u_m$.   Since these moves do not create or remove them,  the $a_m^{\pm 1}$ in $u_m$ correspond with those in $v_m$ in number, sign, and relative location.

Say that a subword $\sigma$ in $v_m$ is \emph{superfluous} if it  has the form  $a_m  \tau  a_m^{-1}$ or $a_m^{-1} \tau  a_m$ for some word $\tau$ on $a_1^{\pm 1}, \ldots, a_m^{\pm 1}$ and the $a_m^{\pm 1}$ and $a_m^{\mp 1}$ that bookend $\sigma$, correspond to an $a_m^{\pm 1}$ and an $a_m^{\mp 1}$ that bookend  a subword $\overline{\sigma}$ in $u_m$ that freely reduces to the identity. Given such a $\sigma$, write $v_m = \sigma_0 \sigma \sigma_1$ (as words), let $\lambda= \exp_s(\tau)$, and let $\mu = \exp_{s}(\sigma_0)$. There exists  a word $\xi$ on $a_1^{\pm 1}, \ldots, a_m^{\pm 1}$ arising  in the following three cases as follows: 
\begin{enumerate}
\item \label{case 1 of shuffling} If  $\sigma = a_m  \tau  a_m^{-1}$, then the shuffling moves give that  $\sigma = a_m \xi  a_m^{-1}s^{\lambda}$  in $H$. Further, $\varphi^{-\mu}(a_m  \xi  a_m^{-1})  =   \overline{\sigma}$.  Define $\sigma' = s^{\lambda}$.
\item \label{case 2 of shuffling} If  $\sigma = a_m^{-1}  \tau  a_m$ and $\lambda \leq 0$, then they give that  $\sigma = a_m^{-1} \xi  a_m a_{m-1}^{-\lambda} s^{\lambda}$  in $H$. Further, $\varphi^{-\mu}( \xi  )  =   \overline{\sigma}$.  Define $\sigma' = a_{m-1}^{-\lambda}s^{\lambda}$.
\item \label{case 3 of shuffling} If  $\sigma = a_m^{-1}  \tau  a_m$ and $\lambda > 0$, then moves $a_i s \mapsto  s \varphi(a_i)$  and  $a_i s^{-1}  \mapsto s^{-1} \varphi^{-1}(a_i)$ give that  $\sigma = s^{\lambda} a_{m-1}^{-\lambda} a_m^{-1} \xi  a_m$ in $H$. Further, $\varphi^{-\mu-\lambda}(  \xi  )  =   \overline{\sigma}$. Define $\sigma' = s^{\lambda} a_{m-1}^{-\lambda}$.
\end{enumerate} 
In each case  $\xi = 1$, because $\overline{\sigma}   =   1$, and so  $\sigma = \sigma'$  in $H$.

Let $i = \rank(\tilde{u})$.    If $i <m$, then all the $a_m^{\pm 1}$ in $u_m$  cancel away on free reduction of $u_m$.  So there exists a family of pairwise disjoint superfluous subwords of $v_m$ which together contain every  $a_m^{\pm 1}$ in $v_m$.    Let $v_{m-1}$ be the word obtained from   $v_m$ by replacing each of these superfluous  subwords $\sigma$  by the corresponding $\sigma'$ described above.    Then  $v_{m-1}  =  v_m$  in $H$ and 
$$\begin{array}{rl}
 \rank(v_{m-1}) & \!\! \leq \ m-1, \\  
 \wt_s(v_{m-1}) & \!\! \leq \ \wt_s(v_{m}), \\
 \wt_{a_j}(v_{m-1})  & \!\! \leq \  \wt_{a_j}(v_{m}) \text{ for } j=1, \ldots, m-2, \\ 
 \wt_{a_{m-1}}(v_{m-1}) & \!\! \leq \ \wt_{a_{m-1}}(v_{m}) +  \wt_s(v_{m}), 
 \end{array}$$   
where the final inequality holds because the $a_{m-1}^{-\lambda}$ inserted in all instances of cases \eqref{case 2 of shuffling} and \eqref{case 3 of shuffling} contribute a total of no more than $\wt_s(v_{m})$ letters  $a_{m-1}^{\pm 1}$.  

If $i <m-1$, then, because there are   no $a_m^{\pm 1}$ letters in $v_{m-1}$, we can obtain a word $v_{m-2}$ from $v_{m-1}$ in the same manner that we obtained  $v_{m-1}$ from $v_{m}$ and subject to the same  inequalities as displayed above, but with $m$ decremented by $1$.  Repeat until  arriving at $v_i$.  Then $\rank(v_{i}) =i$ and 
\begin{equation} \label{sum of weights bound} 
\wt_{a_1}(v_{i}) + \cdots + \wt_{a_i}(v_{i}) \  \leq  \ \wt_{a_1}(v_{m}) + \cdots + \wt_{a_i}(v_{m}) + \wt_{s}(v_{m}) \ = \ \ell(v_m).
\end{equation}

Now, $v_i$ freely equals $w_0 s^{\alpha_1} w_1 s^{\alpha_2} w_2 \cdots s^{\alpha_k} w_k$  for some  reduced words  $w_0, \ldots, w_k$  on $a_1^{\pm 1}, \ldots, a_i^{\pm 1}$  and some non-zero $ \alpha_1, \ldots, \alpha_k \in \Z$.  
For $0 \leq j  \leq k$,  let $\beta_j =  - \alpha_1 - 	\cdots - \alpha_j$, so that $\tilde{u}$  freely equals  $\varphi^{\beta_0}(w_0) \cdots \varphi^{\beta_k}(w_k)$.  The number of pieces in the rank-$i$ decomposition of $w_j$ is at most $\ell(w_j)$ since each piece has at least one letter.  By Lemma~\ref{lem:pieces exercise},  the rank-$i$ decompositions of $w_j$ and of  $\varphi^{\beta_j}(w_j)$ have the same number of pieces. Free reduction between an  $\varphi^{\beta_j}(u_j)$ and the neighbouring $\varphi^{\beta_{j+1}}(u_{j+1})$ can only cause pieces to merge or cancel, so  $\pieces{\tilde{u}} \leq \ell(w_0) + \cdots +  \ell(w_k)$, which is at most $\ell(v_m)$ by \eqref{sum of weights bound}.   
\end{proof}

\begin{cor}\label{cor:rank i distortion}
	For $i=1, \ldots, m$, there exists $K_i >0$ such that for all $g \in F$ of rank $i$,	we have $\abs{g}_F \leq K_i\pieces{g}\abs{g}_H^{i-1}$.
\end{cor}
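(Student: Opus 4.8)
The plan is to prove the bound $\abs{g}_F \leq K_i\pieces{g}\abs{g}_H^{i-1}$ by induction on the rank $i$, using the piece decomposition of the normal form together with the growth estimates of Proposition~\ref{prop:lower bound distortion}. The base case $i=1$ is immediate: a rank-$1$ element is a power of $a_1$, which is undistorted (indeed $\varphi$ fixes $a_1$), so $\abs{g}_F \leq \abs{g}_H$ and $\pieces{g}=1$ suffices. For the inductive step, write the reduced word $\tilde u$ representing $g$ in its rank-$i$ decomposition $\tilde u = \pi_1\cdots\pi_p$ with $p = \pieces{g}$. It is enough to bound $\abs{\pi_j}_F$ for each strict rank-$i$ piece $\pi_j$, since the pieces of rank $\le i-1$ are handled by the inductive hypothesis (and there are at most $p$ of them, each with $\pieces{\cdot}=1$ as a piece but possibly more as a word---this needs a small remark, see below).

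The heart of the matter is a single strict rank-$i$ piece $\pi$. The idea is to realize $\pi$ (or something closely related to it) as a subword of the normal form of a controlled conjugate or translate of $g$ inside $H$, then apply Proposition~\ref{prop:fbc subword length} and Lemma~\ref{lem:piece bound}. More precisely: a strict rank-$i$ piece has the form $a_i^\delta u a_i^{-\delta'}$ with $u$ of rank $\le i-1$; applying a suitable power $\varphi^{-r}$ (conjugation by $s^r$ in $H$) one can, by Proposition~\ref{prop:lower bound distortion} run in reverse, shrink $\pi$ down to a bounded-length word at the cost of $\abs{r}^{i-1}\sim \abs{\pi}_F$, i.e. $\abs{r}\sim\abs{\pi}_F^{1/(i-1)}$. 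But $\pi$ itself sits inside $\tilde u$, the normal form of $g$, so by Proposition~\ref{prop:fbc subword length} we have $\abs{\pi}_H \leq (2m+1)\abs{g}_H$; combining with the lower bound in Proposition~\ref{prop:lower bound distortion}, applied to $\varphi^{r}(\pi_0)=\pi$ where $\pi_0=\varphi^{-r}(\pi)$ has bounded length, we get that when $\abs{\pi}_F$ is large, $\abs{r}$ is bounded in terms of $\abs{\pi}_H$, and hence $\abs{\pi}_F \leq D_i\abs{r}^{i-1}\abs{\pi_0}_F \leq K_i'\abs{g}_H^{i-1}$. Summing over the $p$ strict pieces and adding the contributions of the low-rank pieces via induction gives $\abs{g}_F \leq K_i\, p\,\abs{g}_H^{i-1}$ after enlarging the constant.

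The step I expect to be the main obstacle is making the link between "$\pi$ is a short word after twisting by $\varphi^{-r}$" and "$\abs{\pi}_H$ controls $\abs{r}$" fully rigorous and uniform in the piece. Proposition~\ref{prop:lower bound distortion} gives $\abs{\varphi^r(\pi_0)}_F \geq (C_i^{1/(i-1)}\abs{r} - \abs{\pi_0}_F^{1/(i-1)})^{i-1}$ only once $\abs{r}^{i-1}\geq C_i^{-1}\abs{\pi_0}_F$, so one must separately dispose of the regime of small $\abs{r}$ (where $\abs{\pi}_F$ is already bounded by a constant times $\abs{\pi_0}_F$, hence polynomially bounded), and one must know that the "bounded length" of $\pi_0=\varphi^{-r}(\pi)$ after maximal shrinking is genuinely bounded by a constant depending only on $m$ (or at worst by $\abs{g}_H$) --- this is exactly the mechanism analysed in the proof of Proposition~\ref{prop:lower bound distortion}, where the cancellation stabilises after at most $\pieces{u}$ steps and one is left with something like $a_i\varphi^k(a_i^{-1})$; one should extract from that proof the clean statement that there is $r$ with $\abs{r}\lesssim \abs{\pi}_F^{1/(i-1)}$ and $\abs{\varphi^{-r}(\pi)}_F \leq$ (constant). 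A secondary technical point: a rank-$i$ piece, viewed as a word in its own right, may have more than one rank-$(i-1)$ piece, so when I invoke the inductive hypothesis on the "core" $u$ of a piece I should phrase the induction in terms of $\sum_j \pieces{\pi_j}$-type quantities, or simply absorb the bounded number $(\le 2)$ of extra boundary letters $a_i^{\pm1}$ into the constant and apply induction to $u$ directly. Once these bookkeeping issues are settled, the inequality $\abs{g}_F \leq K_i\pieces{g}\abs{g}_H^{i-1}$ follows by collecting terms.
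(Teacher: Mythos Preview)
Your main line of attack---shrinking each strict rank-$i$ piece $\pi$ to a bounded-length $\pi_0$ by applying a suitable power $\varphi^{-r}$---does not work in general. Take $i\ge 3$ and $\pi = a_i a_1^N$. For every $r\in\Z$ the word $\varphi^r(a_i)$ ends in a letter of rank $\ge 2$ (for $r>0$ it is positive and ends in $a_1$ or $a_2$; for $r<0$ it ends in $a_{i-1}^{-1}$), so there is no cancellation with $a_1^N$ and $\abs{\varphi^r(\pi)}_F = \abs{\varphi^r(a_i)}_F + N \ge N+1$ for all $r$. Thus no power of $\varphi$ shrinks this piece below $N+1$, and the "bounded-length $\pi_0$" you need simply does not exist. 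Proposition~\ref{prop:lower bound distortion} tells you how pieces \emph{grow} from a given starting point; it does not assert that every piece lies in the $\varphi$-orbit of something short.

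The correct argument is the one you demote to a "secondary technical point": strip the boundary letters $a_i^{\pm 1}$ from each piece $\pi_k = a_i^{\delta_k} v_k a_i^{-\varepsilon_k}$ and apply the inductive hypothesis directly to the core $v_k$, which has rank $\le i-1$. This is exactly what the paper does, and it is much shorter than your proposal. Two ingredients handle the bookkeeping you worried about: Lemma~\ref{lem:piece bound} gives $\pieces{v_k}\le \abs{v_k}_H$, which absorbs the unknown piece-count of $v_k$ into a power of $\abs{v_k}_H$; and Proposition~\ref{prop:fbc subword length} gives $\abs{v_k}_H \le (2m+1)\abs{g}_H$ since $v_k$ is a subword of the normal form of $g$. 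Combining these, $\abs{v_k}_F \le K_{i-1}\pieces{v_k}\abs{v_k}_H^{i-2} \le K_{i-1}\abs{v_k}_H^{i-1} \le K_{i-1}(2m+1)^{i-1}\abs{g}_H^{i-1}$, and summing over the $p=\pieces{g}$ pieces (each contributing at most this plus $2$ for the stripped $a_i^{\pm 1}$) gives the result. No appeal to Proposition~\ref{prop:lower bound distortion} is needed at all. (Minor point: in your base case, $\pieces{a_1^k}=\abs{k}$, not $1$; the bound still holds with $K_1=1$.)
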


\begin{proof}
	We  induct on   $i$.
	If $i=1$, then $g = a_1^k$ for some $k\in \Z$, and $\abs{k}=\abs{g}_F = \pieces{g}$.
	
	Now assume $i>1$. Express $g$, viewed as a reduced word on $a_1^{\pm 1}, \ldots, a_i^{\pm 1}$, as a rank-$i$ product of pieces $\pi_1 \cdots \pi_p$.
	Each piece is $\pi_k = a_i^{\delta_k} v_k a_i^{-\varepsilon_k}$ for some $\delta_k,\varepsilon_k\in \{0,1\}$ and $v_k$ a reduced word of rank at most $i-1$.
	By   induction $\abs{v_k}_F \leq K_{i-1} \pieces{v_k} \abs{v_k}_H^{i-2}$.  By 
	Lemma~\ref{lem:piece bound} $\pieces{v_k} \leq  \abs{v_k}_H$. So $\abs{v_k}_F \leq K_{i-1}\abs{v_k}_H^{i-1}$.
	We then get
	\[ \abs{g}_F 
	\ = \  \sum_{k=1}^{p} \abs{\pi_k}_F 
	\ \leq \  \sum_{k=1}^p (K_{i-1}\abs{v_k}_H^{i-1} + 2)
	\ \leq \  \pieces{g} ( K_{i-1} (2m+1)^{i-1}\abs{g}_H^{i-1} + 2)
	\]
	where the last inequality follows from Proposition~\ref{prop:fbc subword length}.
\end{proof}

If $g=s^{-k}a_i^ks^{k} = \varphi^k(a_i^k)$, then $\abs{g}_H\leq 3k$ and
	$g$ is a product of $k$ pieces, each of which is $\varphi^k(a_i)$,
	a positive word whose length grows like a polynomial in $k$ of degree $i-1$,
	as we saw in Lemma~\ref{lem:growth of a_i}.
	Thus the bound in Corollary~\ref{cor:rank i distortion} is sharp 
	in this case (up to constants).

\section{Solving the 0-twisted conjugacy problem} \label{0-twisted section}

 As we saw in Section~\ref{reductions}, the conjugacy relation $uw = wv$ in $H$ amounts to the $\varphi$-twisted conjugacy relation $\tilde{u}\varphi^{-p}(\tilde{w}) = \tilde{w}\varphi^{-r}(\tilde{v})$ in $F$, where we have normal forms $u=\tilde{u}s^p,v=\tilde{v}s^p$, and $w=\tilde{w}s^r$.
 We assume in this section that $p=0$.

Recall that the 0-twisted conjugacy problem asks whether,
given words $\tilde{u}$, $\tilde{v}$ on $a_1^{\pm 1}, \ldots, a_m^{\pm 1}$,  there exist $r \in \Z$ and $\tilde{w} \in F$ such that  
\begin{equation} \label{eq:0 twisted}
\tilde{u}  \tilde{w}    \   =   \   \tilde{w} \varphi^{-r}(\tilde{v}) \ \ \text{in $F$.}
\end{equation}

\begin{prop}[0-twisted conjugacy problem] \label{0 twisted conj problem}
	There exists $A >0$ with the following property.  Suppose $\tilde{u}$ and $\tilde{v}$ are words on $a_1^{\pm 1}, \ldots, a_m^{\pm 1}$.
	\begin{enumerate}\renewcommand{\theenumi}{\Roman{enumi}}
		\item \label{case:0-twisted CL} If there exist $r\in \Z$ and $\tilde{w} \in F$ satisfying \eqref{eq:0 twisted}, then there are such $r$ and $\tilde{w}$ with  
		$\abs{r} + \abs{\tilde{w}}_H \leq A (\abs{\tilde{u}}_H + \abs{\tilde{v}}_H)$.
		\item \label{case:0-twisted coda} If there exists $\tilde{w} \in F$ satisfying \eqref{eq:0 twisted} with $r=0$, then there exists such  $\tilde{w}$ with $\abs{\tilde{w}}_H \leq A (\abs{\tilde{u}}_H + \abs{\tilde{v}}_H)$.
		\item \label{case:0-twisted compl} There is an algorithm that,
		given $\tilde{u}$ and $\tilde{v}$ will determine
		 whether or not there exist $r \in \Z$ and $\tilde{w}\in F$ solving \eqref{eq:0 twisted}, and will exhibit them if they exist.  The running time of this algorithm 
		 is polynomial in $\abs{\tilde{u}}_H+\abs{\tilde{v}}_H$. 	
	\end{enumerate}
\end{prop}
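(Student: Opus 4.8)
\emph{The plan.} The relation \eqref{eq:0 twisted} says exactly that $\tilde w s^r$ conjugates $\tilde u$ to $\tilde v$ inside $H$: if $\tilde u\tilde w = \tilde w\varphi^{-r}(\tilde v)$ in $F$ then, reading $\varphi^{-r}(\tilde v)$ as $s^r\tilde v s^{-r}$ in $H$, we get $(\tilde w s^r)^{-1}\tilde u(\tilde w s^r) = s^{-r}\tilde w^{-1}\tilde u\tilde w s^r = s^{-r}\varphi^{-r}(\tilde v)s^r = \tilde v$, and conversely. Hence a solution exists if and only if $\tilde u$ is $F$-conjugate to $\varphi^{-r}(\tilde v)$ for some $r$ --- equivalently, the $F$-conjugacy classes of $\tilde u$ and of $\tilde v$ lie in a common $\varphi$-orbit --- and when it does, a conjugator $\tilde w$ can be supplied by Lemma~\ref{CP in F lemma}. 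The first move is to reduce to $\tilde u,\tilde v$ cyclically reduced: writing $\tilde u = z_u\tilde u'z_u^{-1}$, $\tilde v = z_v\tilde v'z_v^{-1}$ with $\tilde u',\tilde v'$ cyclically reduced, the substitution $\tilde w = z_u\tilde w'\varphi^{-r}(z_v)^{-1}$ turns \eqref{eq:0 twisted} into the corresponding relation for $\tilde u',\tilde v'$; since $z_u,z_v$ are subwords of $\tilde u,\tilde v$, Proposition~\ref{prop:fbc subword length} controls all length changes up to a multiplicative constant and an additive $O(|r|)$ coming from $\varphi^{-r}(z_v)$. By Lemmas~\ref{lem:pieces exercise0}--\ref{lem:pieces exercise} (and their evident cyclic analogues), $\varphi^{-r}$ sends a cyclically reduced word to a cyclically reduced one with the same cyclic piece decomposition, so for an admissible $r$ the word $\varphi^{-r}(\tilde v')$ is \emph{literally a cyclic rotation of} $\tilde u'$; in particular $\pieces{\tilde u'}=\pieces{\tilde v'}$, and, conjugating by the cyclic shift and using Proposition~\ref{prop:fbc subword length} once more, $|\tilde u'|_H$ and $|\varphi^{-r}(\tilde v')|_H$ are comparable.

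\emph{A dichotomy.} Suppose the $\varphi$-orbit of the conjugacy class of $\tilde v'$ is finite. Extending the argument of Lemma~\ref{lem:fixed pieces} from $\varphi$ to $\varphi^N$ (the exponent-sum and piece-counting steps go through with the same force, so $\Fix(\varphi^N)=\Fix(\varphi)=\langle a_1,\,a_2a_1a_2^{-1}\rangle$ for every $N\neq 0$), one sees that a cyclically reduced word whose conjugacy class is $\varphi$-periodic is in fact fixed by $\varphi$: its cyclic pieces grow under every power of $\varphi$ unless they have rank $\le 2$ (Proposition~\ref{prop:lower bound distortion}), and a cyclically reduced word of rank $\le 2$ with $\varphi$-periodic class lies in $\Fix(\varphi)$ by the same counting. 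Thus $\varphi^{-r}(\tilde v')=\tilde v'$ for all $r$, any solution forces $\tilde u'\sim_F\tilde v'$, and $r=0$ together with $\tilde w'$ a prefix of $(\tilde u')^{-1}$ from Lemma~\ref{CP in F lemma} works, with $|\tilde w'|_H\le(2m+1)|\tilde u'|_H$ by Proposition~\ref{prop:fbc subword length}. This already proves part~\eqref{case:0-twisted coda} (which is just Lemma~\ref{CP in F lemma} and Proposition~\ref{prop:fbc subword length}, keeping $r=0$ throughout) and disposes of the periodic case of part~\eqref{case:0-twisted CL}.

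\emph{The aperiodic case and the main obstacle.} The substantive case is when the orbit of $[\tilde v']$ is infinite; here everything hinges on bounding $|r|$ \emph{linearly} in $|\tilde u'|_H+|\tilde v'|_H$. Since $\tilde v'$ has some rank $i\ge 2$ it carries a strict rank-$i$ piece $\pi$, and (for $i\ge 3$ automatically, for $i=2$ after a short separate check) $\pi$ is not fixed by $\varphi$. By Proposition~\ref{prop:lower bound distortion}, $|\varphi^{-r}(\tilde v')|_F\ge|\varphi^{-r}(\pi)|_F\gtrsim|r|^{\,i-1}$ once $|r|$ exceeds a threshold of order $|\tilde v'|_F^{1/(i-1)}$; feeding this into the distortion estimate of Corollary~\ref{cor:rank i distortion} --- which carries the factor $\pieces{\varphi^{-r}(\tilde v')}=\pieces{\tilde v'}\le|\tilde v'|_H$ --- together with the comparability $|\varphi^{-r}(\tilde v')|_H\asymp|\tilde u'|_H$ noted above bounds $|r|$. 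The delicate point, and the heart of the proof, is to make this bound genuinely \emph{linear} rather than the weaker polynomial bound that the above yields directly. This calls for (i) isolating the growth of $\varphi^{-r}(\pi)$ in a single coordinate $a_{i-1}$ and using that $\langle a_1,s\rangle\cong\Z^2$ is undistorted in $H_m$ (as $H_m$ is CAT$(0)$, indeed biautomatic), via the quotient homomorphisms $H_m\twoheadrightarrow H_{m'}$ obtained by killing an initial block $a_1,\dots,a_j$ of generators; and (ii) handling pieces of the form $a_i u a_i^{-1}$, whose two bookend letters conspire to cancel the naive $a_{i-1}$-growth, by recursing into $u$ exactly as in Case~2 of the proof of Proposition~\ref{prop:lower bound distortion}. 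Once $|r|\le A(|\tilde u'|_H+|\tilde v'|_H)$ is in hand, the reduction of the first paragraph bounds $|\tilde w|_H$ linearly as well, giving part~\eqref{case:0-twisted CL}.

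\emph{Complexity.} For part~\eqref{case:0-twisted compl}, the algorithm is a bounded search licensed by the estimate just proved. Enumerate the integers $r$ with $|r|\le A(\ell(\tilde u)+\ell(\tilde v))$; for each, compute the reduced word $\varphi^{-r}(\tilde v)$ --- of polynomial size, since $\varphi$ is polynomially growing --- cyclically reduce it and $\tilde u$, and test in polynomial time whether the two cyclic reductions are rotations of one another (for instance, whether one is a subword of the square of the other). If some $r$ passes, output it with the conjugator extracted from Lemma~\ref{CP in F lemma}; if none does, report that no solution exists. The periodic case needs no special handling: if a solution exists there, then $r=0$ lies in the search range. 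Finally $\ell(\tilde u)=|\tilde u|_F\le K_m|\tilde u|_H^{\,m}$ by Corollary~\ref{cor:rank i distortion} (and $|\tilde u|_H\le\ell(\tilde u)$), so ``polynomial in $\ell(\tilde u)+\ell(\tilde v)$'' and ``polynomial in $|\tilde u|_H+|\tilde v|_H$'' coincide.
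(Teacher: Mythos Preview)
Your overall framework matches the paper's: reduce to cyclically reduced $u',v'$, observe that $\varphi^{-r}(v')$ is a cyclic rotation of $u'$, then bound $|r|$ via growth of a non-fixed piece. The algorithm and the handling of the fixed case are essentially as in the paper.

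However, you have a genuine gap at precisely the point you flag as ``the heart of the proof.'' You apply Corollary~\ref{cor:rank i distortion} to the whole word $\varphi^{-r}(\tilde v')$, which introduces the factor $\pieces{\tilde v'}$ and leaves you with a superlinear bound; you then propose to repair this with quotient homomorphisms and an undistortion argument for $\langle a_1,s\rangle$. None of that is needed, and the sketch you give is not clearly correct as stated. The paper's fix is much simpler: apply Corollary~\ref{cor:rank i distortion} to the \emph{single piece} $\varphi^{-r}(\pi)$ rather than to $\varphi^{-r}(\tilde v')$. Since $\varphi^{-r}(\pi)$ is one piece, the offending factor is $1$, and you get $|\varphi^{-r}(\pi)|_F\le K_i|\varphi^{-r}(\pi)|_H^{\,i-1}$. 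The other ingredient is that $\varphi^{-r}(\pi)$ is literally a subword of $u'$: since $\varphi^{-r}(v')$ is a cyclic rotation of $u'$, the piece $\varphi^{-r}(\pi)$ sits inside $u'u'$, and if $u'$ is chosen so that the piece decomposition of $u'u'$ is two copies of that of $u'$ (arrange this by an extra cyclic shift at the outset), the piece must lie in a single copy. Proposition~\ref{prop:fbc subword length} then gives $|\varphi^{-r}(\pi)|_H\le(2m+1)|\tilde u|_H$, and likewise $|\pi|_H\le(2m+1)|\tilde v|_H$. Taking $(i-1)$-th roots in Proposition~\ref{prop:lower bound distortion} now yields the linear bound $|r|\le (K_i/C_i)^{1/(i-1)}(2m+1)(|\tilde u|_H+|\tilde v|_H)$ directly.

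A secondary point: your dichotomy ``periodic orbit vs.\ aperiodic orbit'' is more elaborate than necessary. The paper simply splits on whether $v'\in\Fix(\varphi)$: if so, $\varphi^{-r}(v')=v'$ for all $r$ and one takes $r=0$; if not, Lemma~\ref{lem:fixed pieces} supplies a non-fixed rank-$i$ piece $\pi$ and the growth argument runs. Your claim that a $\varphi$-periodic cyclically reduced class must be pointwise fixed is plausible but is extra work you do not need.
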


The proof of Proposition~\ref{0 twisted conj problem} uses the following lemma, which determines the form of a `short' solution to \eqref{eq:0 twisted} whenever any solution exists.

\begin{lemma}\label{lem:0 twisted form}
	There exists $B >0$ with the following property.    Suppose  $\tilde{u},  \tilde{v} \in F$  are as in Proposition~\ref{0 twisted conj problem}.  Suppose there exist  $r \in \Z$ and $\tilde{w} \in F$ satisfying \eqref{eq:0 twisted}.
	Then there exist $\tilde{w}_0 \in F$ satisfying 
	\begin{equation} \label{eq:0 twisted short}
	\tilde{u}  \tilde{w}_0  \   = \   \tilde{w}_0 \varphi^{-r}(\tilde{v}) \ \ \text{in $F$,}
	\end{equation} 
	 with $\tilde{w}_0 = UV$, where
	 $U$ is a prefix of $\tilde{u}^{-1}$  and
	 $\varphi^r(V)$ is a suffix of $\tilde{v}$.
	 
	 Furthermore, either  $\tilde{u}  \tilde{w}     =    \tilde{w}  \tilde{v}$ in $F$, or  
	 $\abs{r} \leq B (\abs{\tilde{u}}_H + \abs{\tilde{v}}_H)$.
\end{lemma}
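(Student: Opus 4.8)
The plan is to split the problem into its two assertions. For the first, suppose $r\in\Z$ and $\tilde w\in F$ satisfy \eqref{eq:0 twisted}, i.e. $\tilde u\,\tilde w=\tilde w\,\varphi^{-r}(\tilde v)$ in $F$. Set $v'=\varphi^{-r}(\tilde v)$. Then $\tilde w$ conjugates $v'$ to $\tilde u$ inside $F$, so we may apply Lemma~\ref{CP in F lemma} with $u=\tilde u$, $v=v'$: there is a word $w$ that is a concatenation of a prefix of $\tilde u^{-1}$ with a suffix of $v'=\varphi^{-r}(\tilde v)$ and satisfies $\tilde u\,w=w\,v'$ in $F$. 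Writing $w=UV$ with $U$ a prefix of $\tilde u^{-1}$ and $V$ a suffix of $\varphi^{-r}(\tilde v)$, and noting that $V$ a suffix of $\varphi^{-r}(\tilde v)$ is equivalent to $\varphi^r(V)$ being a suffix of $\tilde v$ (applying $\varphi^r$, which by Lemma~\ref{lem:pieces exercise} does not disturb the factorization into pieces and hence the suffix structure—one has to be mildly careful here that a suffix of $\varphi^{-r}(\tilde v)$ maps to a suffix of $\tilde v$, which follows since $\varphi^r$ acts piecewise and respects the rank-$i$ decomposition), we take $\tilde w_0=UV=w$. This $\tilde w_0$ solves \eqref{eq:0 twisted short} by construction. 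If $v'$ happens to be cyclically reduced we even get $V$ empty, but we do not need that.

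For the second assertion, assume $\tilde u\,\tilde w\neq\tilde w\,\tilde v$ in $F$; we must bound $|r|$. The idea is that in the cyclically reduced setting $\varphi^{-r}(\tilde v)$ and $\tilde u$ must have the same cyclic reduction (up to the conjugation), so once $|r|$ is large the length $|\varphi^{-r}(\tilde v)|_F$ grows polynomially (Proposition~\ref{prop:lower bound distortion}, Lemma~\ref{lem:growth of a_i})—unless $\tilde v$ lies in a subgroup on which $\varphi$ acts with bounded distortion, namely one generated by $\varphi$-fixed pieces (Lemma~\ref{lem:fixed pieces}). More precisely: let $\hat v$ be a cyclic reduction of $\tilde v$, so $\tilde v=z\hat v z^{-1}$ for some $z\in F$. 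Since $\varphi$-conjugacy is preserved, $\tilde u$ is conjugate in $F$ to $\varphi^{-r}(\hat v)$ (after conjugating $\tilde v$; note $\varphi$ need not fix $z$, but $\varphi^{-r}(\tilde v)=\varphi^{-r}(z)\varphi^{-r}(\hat v)\varphi^{-r}(z)^{-1}$), hence the cyclic reduction of $\varphi^{-r}(\hat v)$ has length equal to the cyclic length of $\tilde u$, which is at most $|\tilde u|_F\le$ (a bounded power of) $|\tilde u|_H$ via Corollary~\ref{cor:rank i distortion}. If $\hat v=1$ then $\tilde v$ and $\tilde u$ are both trivial in the relevant sense and one checks the conjugacy holds with $r=0$ (contradiction with our assumption), so $\hat v\neq 1$. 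Decompose $\hat v$ into pieces; if every piece is $\varphi$-fixed, then $\varphi^{-r}(\hat v)=\hat v$ for all $r$ and again $\tilde u\sim\hat v=\varphi^{-r}(\tilde v)$ in $F$ independent of $r$, so in particular $r=0$ works and we are in the excluded case. Otherwise some cyclic rotation of $\hat v$ contains a strict-rank-$i$ piece $\pi$ not fixed by $\varphi$; here I would argue that the cyclically reduced length of $\varphi^{-r}(\hat v)$ is at least $|\varphi^{-r}(\pi)|_F$ minus a correction controlled by the (finitely many, bounded) other pieces—because by Lemma~\ref{lem:pieces exercise} there is no cancellation between the images of the pieces, cancellation in the cyclic reduction of $\varphi^{-r}(\hat v)$ can only eat into the two ends, so a middle piece survives essentially intact—giving cyclic length $\gtrsim|r|^{i-1}-(\text{const depending on }|\hat v|_F)$. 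Combining with the upper bound $\lesssim|\tilde u|_F$ forces $|r|^{i-1}\lesssim|\tilde u|_F+|\hat v|_F\lesssim(|\tilde u|_H+|\tilde v|_H)^{\,O(1)}$, and hence $|r|\le B(|\tilde u|_H+|\tilde v|_H)$ for a suitable $B$ after adjusting constants; for $i=2$ this is already linear, and for larger $i$ the $(i-1)$-th root improves it, so a single linear bound $B(|\tilde u|_H+|\tilde v|_H)$ absorbs all cases.

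The main obstacle is the second assertion: making rigorous the claim that a non-$\varphi$-fixed piece in the \emph{interior} of a cyclic rotation of $\hat v$ survives into the cyclic reduction of $\varphi^{-r}(\hat v)$ with length $\gtrsim|r|^{i-1}$. This requires care because (a) one must pick the cyclic rotation so that the surviving piece is genuinely interior, flanked on both sides by the rest of $\hat v$, so that cancellation from cyclically closing up the word cannot reach it; (b) one must invoke the "no cancellation between piece images" statement of Lemma~\ref{lem:pieces exercise} at the top rank, and handle the possibility that the surviving piece has rank lower than $\rank(\hat v)$ by passing to the appropriate rank; and (c) one must track how $|\varphi^{-r}(\pi)|_F$ compares to $|\varphi^{-r}(\hat v)|_F$ when there \emph{is} heavy cancellation among the other pieces—here Proposition~\ref{prop:lower bound distortion}'s lower bound, valid once $|r|^{i-1}\gtrsim|\pi|_F$, is exactly the tool, and the finitely many small $|r|$ are handled trivially. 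Everything else—the appeal to Lemma~\ref{CP in F lemma}, the suffix/prefix bookkeeping under $\varphi^r$, and the distortion estimates—is routine given the results already established in the excerpt.
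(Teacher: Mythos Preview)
There are two genuine gaps, and both stem from the same missing idea.

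\textbf{First assertion.} Your claim that ``$V$ a suffix of $\varphi^{-r}(\tilde v)$ is equivalent to $\varphi^r(V)$ being a suffix of $\tilde v$'' is false. Lemma~\ref{lem:pieces exercise} only guarantees that $\varphi^r$ respects \emph{piece boundaries}; a suffix that begins in the interior of a piece need not map to a suffix. Concretely, take $\tilde v=a_2$ and $r=1$: then $\varphi^{-1}(\tilde v)=a_2a_1^{-1}$, the suffix $V=a_1^{-1}$ gives $\varphi(V)=a_1^{-1}$, which is not a suffix of $a_2$. So applying Lemma~\ref{CP in F lemma} directly to $\tilde u$ and $\varphi^{-r}(\tilde v)$ does not produce a $\tilde w_0$ of the required form.

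\textbf{Second assertion.} You correctly flag the main obstacle---that a non-fixed piece $\pi$ in $\hat v$ must survive into the \emph{cyclic} reduction of $\varphi^{-r}(\hat v)$---but do not resolve it. If $\hat v$ is merely cyclically reduced as a word, $\varphi^{-r}(\hat v)$ need not be cyclically reduced, and cancellation at the seam can in principle consume the growing piece.

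\textbf{The missing idea.} The paper handles both issues at once by passing not just to a cyclic reduction but to a \emph{piece-compatible} cyclic conjugate: choose $u'$ conjugate to $\tilde u$ (by a prefix $y$ of $\tilde u^{-1}$) so that $u'u'$ is reduced \emph{and} its rank-$m$ piece decomposition is the concatenation of two copies of that of $u'$; likewise $v'$ from $\tilde v$ via a prefix $z$ of $\tilde v^{-1}$. Then Lemma~\ref{lem:pieces exercise} forces $\varphi^{-r}(v')$ to be cyclically reduced for all $r$, so (i) Lemma~\ref{CP in F lemma} gives a conjugator $\tilde w_2$ that is purely a prefix of $(u')^{-1}$, and setting $\tilde w_0=y\tilde w_2\,\varphi^{-r}(z^{-1})$ yields $U=y\tilde w_2$ and $V=\varphi^{-r}(z^{-1})$ with $\varphi^r(V)=z^{-1}$ manifestly a suffix of $\tilde v$; and (ii) since $u'$ and $\varphi^{-r}(v')$ are cyclically reduced and conjugate, $\varphi^{-r}(\pi)$ is a subword of $u'u'$, and the piece-compatibility forces it to lie inside a single copy of $u'$, whence $|\varphi^{-r}(\pi)|_H\le(2m+1)|\tilde u|_H$ by Proposition~\ref{prop:fbc subword length}. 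Proposition~\ref{prop:lower bound distortion} and Corollary~\ref{cor:rank i distortion} then give the linear bound on $|r|$ directly, with no survival-in-cyclic-reduction argument needed.
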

	
\begin{proof}   	
First replace $\tilde{u}$ by a cyclic conjugate $u'$  such that $u'u'$ is reduced (i.e.\ $u'$ is cyclically reduced) and the  rank-$m$ piece decomposition of $u' \, u'$ is the concatenation of two copies of the piece decomposition of $u'$---that is, the rightmost piece in $u'$ does not combine with the leftmost piece in $u'$ to make a single piece in $u' \, u'$.  	
	This is achieved by  conjugating $\tilde{u}$ by a suitable $y$ that is a prefix of   $\tilde{u}^{-1}$, so that $y^{-1} \tilde{u} y = u'$ in $F$.	
	
	Likewise, replace $\tilde{v}$ by a similarly structured cyclic conjugate $v'$. Let $z$ be the prefix of $\tilde{v}^{-1}$ such that $z^{-1} \tilde{v} z = v'$ in $F$.  
	
Lemma~\ref{lem:pieces exercise} gives us that  $\varphi^k(u')$ and  $\varphi^k(v')$ are cyclically reduced for all $k \in \Z$.    
		
	By assumption, we have $r \in \Z$ and $\tilde{w} \in F$ satisfying \eqref{eq:0 twisted}.
	This implies that there is $\tilde{w}_1\in F$ satisfying
	\begin{equation}\label{eq:after cyclic reduction}
	u' \tilde{w}_1 \ = \  \tilde{w}_1 \varphi^{-r} (v') \ \  \text{ in } F.
	\end{equation}
	%Namely, $\tilde{w}_1 = y^{-1} \tilde{w} \varphi^{-r}(z)$.
	Since \eqref{eq:after cyclic reduction} is a conjugacy relation in $F$ and  $\varphi^{-r} (v')$ is cyclically reduced,   Lemma~\ref{CP in F lemma} tells us that there is a prefix  $\tilde{w}_2$  of $(u')^{-1}$ such that $u' \tilde{w}_2 = \tilde{w}_2 \varphi^{-r} (v')$ in $F$.
  	Then $\tilde{w}_0 = y \tilde{w}_2 \varphi^{-r}(z^{-1})$ satisfies \eqref{eq:0 twisted short} with $r_0=r$.
	 We then take    $U = y \tilde{w}_2$, which  is a prefix of $\tilde{u}^{-1}$ (since $y$ is, and $\tilde{w}_2$ is a prefix of $(u')^{-1} = y^{-1}\tilde{u}^{-1} y$)   
	and $V = \varphi^{-r}(z^{-1})$.
	 From the definition of $z$, $\varphi^r(V) = z^{-1}$ is a suffix of $\tilde{v}$.  
	
	To complete the proof, we need  to bound $\abs{r}$.  
	
	If $v'$ is fixed by $\varphi$ or $r=0$, then $v'$ is conjugate to $u'$ in $F$ and we have $\tilde{u}  \tilde{w}     =    \tilde{w}  \tilde{v}$ in $F$.
	
	Suppose, then, that $v'$ is not fixed by $\varphi$ and $r \neq 0$.
	Let $i$ be the rank of $v'$.  
	As $v'$ is not fixed by $\varphi$, Lemma~\ref{lem:fixed pieces} tells us that   it has a piece $\pi$ that is itself of rank $i$ and is not fixed by $\varphi$.
	Proposition~\ref{prop:lower bound distortion}  and Corollary~\ref{cor:rank i distortion}  give constants $C_i, K_i >0$ such that either
	\begin{align}
	\label{eq:0-twisted bound r 1}
		C_i \abs{r}^{i-1} &\ < \ \abs{\pi}_F \ \leq \ K_i \abs{\pi}_H^{i-1},
	\end{align}
or
	\begin{align}
	\label{eq:0-twisted bound r 2}
	C_i\left(\abs{r}- \left( \frac{\abs{\pi}_F}{C_i} \right)^{\frac{1}{i-1}} \right)^{i-1}
	&\ \leq \ \abs{\varphi^{-r}(\pi)}_F 
	\ \leq \ K_i \abs{\varphi^{-r}(\pi)}_H^{i-1}.
	\end{align}
Since $\pi$ is a subword of $v'$, which is a subword of $\tilde{v}$, Proposition~\ref{prop:fbc subword length} gives us $\abs{\pi}_H \leq (2m+1)\abs{\tilde{v}}_H$.
	Then by Corollary~\ref{cor:rank i distortion} we get $\abs{\pi}_F \leq K_i(2m+1)^{i-1}\abs{\tilde{v}}_H^{i-1}$.
	Meanwhile, since $u'$ and $\varphi^{-r}(v')$ are cyclically reduced and conjugate in $F$, it follows that $\varphi^{-r}(\pi)$ is a subword of $u'u'$. 
	Since the piece decomposition of $u'u'$ consists of the concatenation of two copies of that of $u'$, we must have that $\varphi^{-r}(\pi)$ is a subword of $u'$.	
	So $\abs{\varphi^{-r}(\pi)}_H \leq (2m+1)\abs{\tilde{u}}_H$ by Proposition~\ref{prop:fbc subword length}.
Both \eqref{eq:0-twisted bound r 1} and \eqref{eq:0-twisted bound r 2} lead to
	\[
	\abs{r}  \ \leq \  \left( \frac{K_i}{C_i} \right)^{\frac{1}{i-1}}  (2m+1) (\abs{\tilde{u}}_H + \abs{\tilde{v}}_H),
	\]
	showing that a suitable $B>0$ exists.  
\end{proof}

\begin{proof}[Proof of Proposition~\ref{0 twisted conj problem}]
We begin by establishing the length bounds in \eqref{case:0-twisted CL} and \eqref{case:0-twisted coda}.

Since in Lemma~\ref{lem:0 twisted form} the value of $r$ does not change between \eqref{eq:0 twisted} and \eqref{eq:0 twisted short},   case~\eqref{case:0-twisted coda} of Proposition~\ref{0 twisted conj problem} holds.
Indeed, we need only use Proposition~\ref{prop:fbc subword length} to bound $\abs{U}$ and $\abs{V}$.
Similarly, the bound on $\abs{r}$ and the form of $\tilde{w}_0$ give \eqref{case:0-twisted CL}.

Next we consider the complexity of the algorithms solving the 0-twisted conjugacy problem and its search variant.

Lemma~\ref{lem:0 twisted form} tells us that if a solution exists, then there is a solution of a particularly nice form.
On input $\tilde{u}$ and $\tilde{v}$, we   list all pairs $(\tilde{w},r)$, where $r $ is an integer satisfying $\abs{r} \le B(\abs{\tilde{u}}_H + \abs{\tilde{v}}_H)$, and $\tilde{w}$ has the form $UV$, with $U$ a prefix of $\tilde{u}^{-1}$ and $V=\varphi^{-r}(\hat{V})$, where $\hat{V}$ is a suffix of $\tilde{v}$.
It is not hard to see that the number of such pairs \blue{is} polynomially bounded in terms of $\abs{u}_H + \abs{v}_H$. So a  search through this list for a solution to \eqref{eq:0 twisted}   can be completed in polynomial time. 
If none is found, we conclude that no solution exists.
\end{proof}

\section{Preserved prefixes} \label{prefixes section}

Recall that $uw=wv$ in $H$ amounts to  $\tilde{u}\varphi^{-p}(\tilde{w}) = \tilde{w}\varphi^{-r}(\tilde{v})$ in $F$, where $u=\tilde{u}s^p,v=\tilde{v}s^p$, and $w=\tilde{w}s^r$ are the normal forms.
We now assume $p\ne 0$.
In the specific case when $\tilde{u}$ is the empty word, 
$\tilde{w}$ is a concatenation of a    prefix of  $\varphi^{-p}(\tilde{w})$ with a subword of $\varphi^{-r}(\tilde{v})^{-1}$. 
This points to the fact that, in order to understand the length (or structure) of $\tilde{w}$ (and hence $w$), it is important to understand the length (or structure) of the longest common prefix of $\tilde{w}$ and $\varphi^{-p}(\tilde{w})$.

We begin with the instance where $\tilde{w}$ is a single piece   of a type that behaves well with regards to common prefixes.  This will feed  into the general case in Corollary~\ref{cor:fbc shared prefix of words} below.

If $\pi$ is $a_2 a_1^q$ for some $q \geq 0$, then (assuming $r \geq 0$) the length $1+q$ of the  longest common prefix  $a_2 a_1^q$  of $\pi$ and $\varphi^r(\pi)=a_2 a_1^{q+r}$ can be arbitrarily  large compared to $\abs{\pi^{-1}\varphi^r(\pi)}_H = \abs{a_1^r}_H=r$. 
The same can be said when $\pi$ is $a_1^q$ or $a_1^q a_i^{-1}$ for any $i \geq 2$ and $q \in \Z$.  In contrast, for other types of pieces, the form of the longest common prefix is constrained in a manner that strongly restricts its length:

\begin{lemma}\label{lem:fbc shared prefix}
  For $3 \leq i \leq m$, there exists $B_i >0$ with the following property.   Suppose $r > 0$ and that   $\pi$ is a rank-$i$ piece whose first letter is $a_i$.  Then the longest common prefix $L$ of $\pi$ and $\varphi^r(\pi)$ is a concatenation $L = \Lambda_1 \Lambda_2$ of words, where 
\begin{itemize}
\item $\Lambda_1$  is a prefix of  $\varphi^k(a_i)$ for some $k \in \Z$ satisfying $\abs{k} \leq B_i \left(\abs{\pi^{-1}\varphi^r(\pi)}_H + \abs{r}\right)$, 
\item $\Lambda_2$ is a subword of $\varphi^r(a_i^{-1})$.
\end{itemize} 
\end{lemma}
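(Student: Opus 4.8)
The plan is to analyze the structure of $\varphi^r(\pi)$ for a rank-$i$ piece $\pi = a_i u a_i^{-\delta'}$ (with $i \geq 3$) whose first letter is $a_i$, and compare it letter-by-letter with $\pi$ itself. By Lemma~\ref{lem:fbc image of a_i}, $\varphi^r(a_i) = a_i a_{i-1}\varphi(a_{i-1})\cdots\varphi^{r-1}(a_{i-1})$, which is a reduced word beginning with $a_i a_{i-1}$. By Lemma~\ref{lem:pieces exercise}, the rank-$i$ decomposition of $\varphi^r(\pi)$ is $\varphi^r(a_i)\,\varphi^r(u)\,\varphi^r(a_i^{-\delta'})$ (with appropriate merging only of the distinguished boundary letters $a_i^{\pm1}$), and in particular $\varphi^r(\pi)$ begins with the strict rank-$i$ prefix $\varphi^r(a_i)$ followed by a word of rank $< i$. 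Since $\pi$ itself begins with $a_i$ followed by $u$ (of rank $\leq i-1$), the longest common prefix $L$ decomposes according to how far the agreement extends: either $L$ lies entirely within the initial $\varphi^r(a_i)$-block of $\varphi^r(\pi)$ (comparing against $a_i u$), or the agreement passes the first occurrence of $a_i$ in both words — but in $\varphi^r(\pi)$ the next $a_i^{\pm1}$ after the leading one occurs only inside $\varphi^r(u)$ or at the start of $\varphi^r(a_i^{-\delta'})$, whereas in $\pi$ it occurs inside $u$ or at position $a_i^{-\delta'}$. I would set $\Lambda_1$ to be the common prefix of $\pi$ and the leading block $\varphi^r(a_i)$, and $\Lambda_2$ to be the remaining part of $L$.

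First I would handle $\Lambda_2$: once $L$ extends past the leading $\varphi^r(a_i)$-block of $\varphi^r(\pi)$, it must be matching against letters of $\pi$ of rank $<i$ beyond a point — but the only way the common prefix of two rank-$i$ words can extend past a maximal strict-rank-$i$ initial segment of one of them is if the corresponding letters in the other word agree, which (by counting pieces via Lemma~\ref{lem:pieces exercise}) forces us into the situation where the tail we are reading in $\pi$ lines up with the start of the $\varphi^r(a_i^{-\delta'})$ block — giving that $\Lambda_2$ is (a subword of) $\varphi^r(a_i^{-1})$ (here $\delta'=1$, since if $\delta'=0$ there is no such block and $\Lambda_2$ is empty, which is a subword of anything). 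This is where the precise piece-decomposition bookkeeping from Lemmas~\ref{lem:pieces exercise0} and~\ref{lem:pieces exercise} does the work.

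Second, and this is the main obstacle, I would bound $|k|$ where $\Lambda_1$ is a prefix of $\varphi^k(a_i)$. The idea is that $\Lambda_1$ is a common prefix of $a_i u$ (hence of $\pi$) and of $\varphi^r(a_i)$. Using Lemma~\ref{lem:fbc image of a_i} again, $\varphi^k(a_i)$ and $\varphi^r(a_i)$ share exactly the prefix $\varphi^{\min(|k|,|r|)}(a_i)$-worth of letters when they have the same sign of exponent (and only $a_i a_{i-1}^{\pm}\ldots$ of bounded overlap otherwise), so choosing $k$ so that $\Lambda_1$ is a prefix of $\varphi^k(a_i)$ amounts to measuring how deep the $a_i u$-vs-$\varphi^r(a_i)$ agreement runs, and translating that depth (a length in $F$) into the exponent $k$ via Lemma~\ref{lem:growth of a_i} (length $\sim |k|^{i-1}$, so $|k| \sim (\text{length})^{1/(i-1)}$). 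To close the loop I need to control that length in terms of $|\pi^{-1}\varphi^r(\pi)|_H + |r|$: the common prefix $L$ cancels when forming $\pi^{-1}\varphi^r(\pi)$, so $|\pi^{-1}\varphi^r(\pi)|_H$ controls the part of $\varphi^r(a_i)$ *after* the common prefix, not the common prefix itself; hence I expect to need a lower bound on $|\varphi^r(a_i)|_F$ (which is $\sim r^{i-1}$ by Lemma~\ref{lem:growth of a_i}) together with the observation that $|\Lambda_1|_F \leq |\varphi^r(a_i)|_F$, giving $|k| \leq B_i' |r|$ outright when the exponents have the same sign; the extra $|\pi^{-1}\varphi^r(\pi)|_H$ term in the bound is the slack needed to absorb the case analysis when the relevant exponents have opposite signs or when $\Lambda_1$ runs into the $u$-part. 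Here the constant $B_i$ will come out of comparing the constants $C_i, D_i$ of Lemma~\ref{lem:growth of a_i}. Throughout I would use that $i \geq 3$ so that $a_{i-1}$ itself has rank $\geq 2$ and its images under $\varphi^{\pm}$ are genuinely long, which is exactly the feature that fails for $i = 2$ (the excluded case $\pi = a_2 a_1^q$ discussed just before the lemma).
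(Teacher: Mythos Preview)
Your proposal has a genuine gap at its foundation: you assume that the reduced word $\varphi^r(\pi)$ begins with the block $\varphi^r(a_i)$, but this need not hold. Lemma~\ref{lem:pieces exercise} governs cancellation \emph{between} pieces in a multi-piece word; here $\pi$ is a single piece, and there can be cancellation between $\varphi^r(a_i)$ and $\varphi^r(u)$ when forming the reduced word $\varphi^r(\pi)$. Concretely, take $u$ beginning with $\varphi^{-1}(a_{i-1})^{-1}\varphi^{-2}(a_{i-1})^{-1}\cdots$; then all of $\varphi^r(a_i)$ beyond its first letter cancels into $\varphi^r(u)$, and the reduced $\varphi^r(\pi)$ begins $a_i\,\varphi^r(\rho_{r+1})\cdots$ for the rank-$(i-1)$ pieces $\rho_j$ of $u$. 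In this situation your $\Lambda_1$ (the common prefix of $\pi$ and $\varphi^r(a_i)$) degenerates to $a_i$, and your $\Lambda_2$ must then carry all of $L\setminus\{a_i\}$, which can be arbitrarily long compared to $\abs{\varphi^r(a_i^{-1})}_F\sim r^{i-1}$ --- so it cannot be a subword of $\varphi^r(a_i^{-1})$.

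The paper handles exactly this dichotomy: either there is no cancellation, and then comparing pieces of $u$ with the factors $a_{i-1},\varphi(a_{i-1}),\dots$ of $\varphi^r(a_i)$ shows $L$ is a prefix of $\varphi^{k_0}(a_i)$ with $k_0>0$; or there is complete cancellation, and comparing $\rho_k$ with $\varphi^r(\rho_{k+r})$ shows $\rho_k=\varphi^{-k}(a_{i-1})^{-1}$, so $L$ is a prefix of $\varphi^{-k_0}(a_i)$. In both cases $k_0$ can be much larger than $r$; the bound $\abs{k_0}\le B_i(\abs{\pi^{-1}\varphi^r(\pi)}_H+r)$ is obtained by noting that $\rho_{k_0}$ (or $\varphi^r(\rho_{k_0-r})$) equals $\varphi^{\pm(k_0-1)}(a_{i-1})^{\pm1}$, hence has $F$-length $\gtrsim k_0^{i-1}$, while on the other hand it is (up to $\rho_{k_0}'$) a subword of $\pi^{-1}\varphi^r(\pi)$, so Proposition~\ref{prop:fbc subword length} and Corollary~\ref{cor:rank i distortion} bound its length by $\abs{\pi^{-1}\varphi^r(\pi)}_H^{i-1}$. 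Your growth sketch ``$\abs{\Lambda_1}_F\le\abs{\varphi^r(a_i)}_F$ gives $\abs{k}\le B_i'r$'' misses this mechanism entirely and cannot produce the negative-$k$ case.
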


\begin{proof} 
	We may   assume that $\ell(L) \geq 2$, else $L = a_i$ and the result is immediate with $L= \Lambda_1$ and $k=0$.
	
	\case{1} $\pi = a_i u$ for a word $u$ of rank less than $i$.
		  
	We claim that either there is no cancellation  (as in Figure~\ref{fig:fbc commonprefix1b})  between $\varphi^r(a_i)$ and $\varphi^r(u)$,
	or there is    \emph{complete cancellation} (as in Figure~\ref{fig:fbc commonprefix1a}) by which we mean that $\varphi^r(a_i) \varphi^r(u)$, freely reduces to $a_i$ times a suffix of $\varphi^r(u)$. After all, if there is not  complete cancellation, then the first two letters $a_i a_{i-1}$ of $\varphi^r(a_i)$ are not cancelled away on free reduction of  $\varphi^r(a_i)\varphi^r(u)$.  As   $\ell(L) \geq 2$, these are also the first two letters of $\pi = a_i u$, and so  the first letter of $u$ is $a_{i-1}$.  But then the first letter of $\varphi^r(u)$
must also be $a_{i-1}$,   and as $\varphi^r(a_i)$ is a positive word, there is no cancellation between it and $\varphi^r(u)$.

\begin{figure}[ht]
\begin{overpic}[%grid,
scale=1.0,unit=1mm]{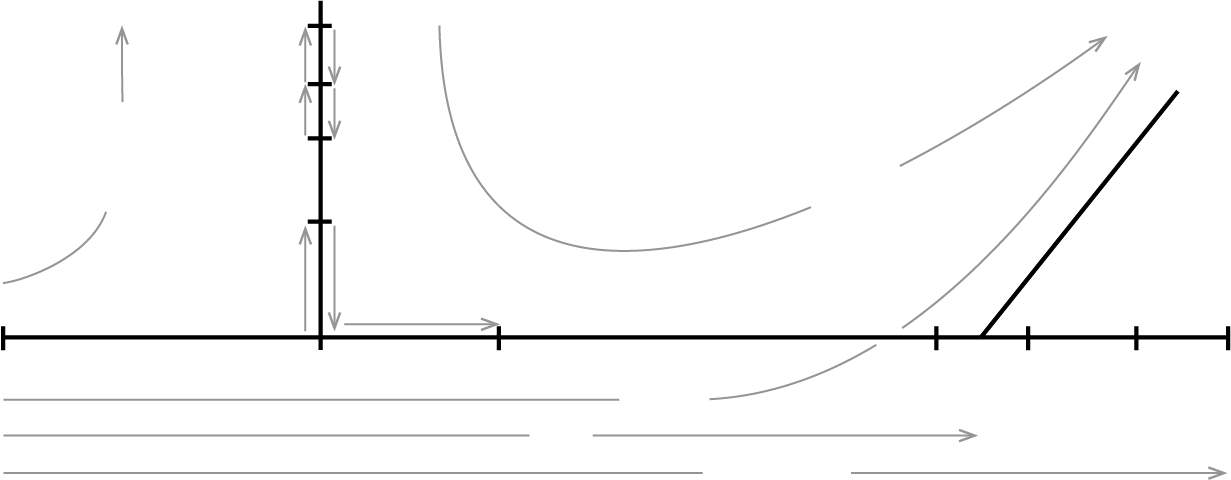}
 \put(61,.5){\small{$\pi = a_i u$}}     
 \put(8,24){\small{\rotatebox{85}{$\varphi^r(a_i)$}}}     
 \put(46.5,3.5){\small{$L$}}     
  \put(69,23.5){\small{\rotatebox{25}{$\varphi^r(u)$}}}     
 \put(53,6.5){\small{$\varphi^r(\pi)$}}     

 \put(10,14.3){\small{$a_i$}}     

 \put(10,10){\small{$a_i$}}     
  \put(34,10){\small{$\rho_1$}}    
 \put(53,10){\small{$\cdots$}}    
 \put(81.5,10){\small{$\rho_{k_0}$}}    
 \put(90,10){\small{$\cdots$}}    
 \put(99,10){\small{$\rho_p$}}    

 \put(28.7,36.5){\small{$\varphi^r(\rho_1)$}}    
 \put(28.7,31.5){\small{$\varphi^r(\rho_2)$}}    
 \put(31.5,24.5){\small{$\vdots$}}    
 \put(28.7,19){\small{$\varphi^r(\rho_r)$}}    

 \put(31.5,15){\small{$\varphi^r(\rho_{r+1})$}}

 \put(13,36.5){\small{$\varphi^{r-1}(a_{i-1})$}}    
 \put(13,31.5){\small{$\varphi^{r-2}(a_{i-1})$}}    
 \put(21,24.5){\small{$\vdots$}}    
 \put(20.5,19){\small{$a_{i-1}$}}

\end{overpic}
 \caption{Cancellation as per Case 1a of the proof of Lemma~\ref{lem:fbc shared prefix}.}
  \label{fig:fbc commonprefix1a}
\end{figure}

Let $\rho_1 \cdots \rho_p$ be the rank-$(i-1)$ decomposition of $u$ into pieces. By Lemma~\ref{lem:pieces exercise},  $\varphi^r(\rho_1) \cdots \varphi^r(\rho_p)$	 is the rank-$(i-1)$ piece decomposition of $\varphi^r(u)$. 
	Choose $k_0$ so that
	\begin{equation} 
	L \ = \  a_i \rho_1 \cdots \rho_{k_0-1} \rho_{k_0}', \label{L set up}
	\end{equation}
	where $k_0$ is chosen so that $\rho_{k_0}'$ is a non-empty prefix of $\rho_{k_0}$.
		
	\case{1a} Complete cancellation.
	
We will show that  $a_i \rho_1 \cdots  \rho_{k_0} = \varphi^{-k_0}(a_i)$, and so $L$ is a prefix of this and the result will hold with $L = \Lambda_1$ and $\Lambda_2$ the empty word.
	
	Lemma~\ref{lem:fbc image of a_i} tells us that $\varphi^r(a_i)   =   
a_i \,  a_{i-1}  \varphi(a_{i-1}) \cdots \varphi^{r-1}(a_{i-1})$.  From this we can read off the first $r$ pieces of $\varphi^r(u)$ on account of the `complete cancellation' between $\varphi^r(a_i)$ and  $\varphi^r(u)$:  for $k=1, \ldots, r$ we have  	$\varphi^r(\rho_k) = \varphi^{r-k}(a_{i-1})^{-1}$, or equivalently
	 \begin{equation}
	 \rho_k \ = \ \varphi^{-k}(a_{i-1})^{-1}. \label{what rho is}
	 \end{equation}
	
After `complete cancellation' 
$\varphi^r(\pi) = a_i \varphi^r(\rho_{r+1}) \cdots \varphi^r(\rho_{p})$.  
As $L$ is also a prefix of $\varphi^r(\pi)$,   
	$$L  \ = \ 
	a_i \varphi^r(\rho_{r+1}) \cdots \varphi^r(\rho_{r+k_0 -1}) \rho_{k_0}'$$ 
where, comparing with  \eqref{L set up},    for $k=1, \ldots, k_0-1$ we have $\rho_k = \varphi^{r}(\rho_{k+r})$, or equivalently $\rho_{k+r} = \varphi^{-r}(\rho_k)$. 
By induction, we can extend \eqref{what rho is} to $k = 1, \ldots, k_0 {+r} -1$.
	This tells us in particular that   
		 \begin{equation}
	\rho_{k_0}   \ =  \   \varphi^{-k_0}(a_{i-1})^{-1}, \label{k0 case}
			 \end{equation}
since $r>0$, and 
	$$a_i \rho_1 \cdots \rho_{k_0}  \ = \  a_i \varphi^{-1}(a_{i-1})^{-1} \cdots \varphi^{-k_0}(a_{i-1})^{-1},$$	which equals, as a word, $\varphi^{-k_0}(a_{i})$ by Lemma~\ref{lem:fbc image of a_i}.  
It follows that $L$ is a prefix of $\varphi^{-k_0}(a_i)$.

Next we will give an upper bound on  $k_0$ that will imply an upper bound on $\abs{-k_0+1}$, proving  the condition on $\Lambda_1$.  From \eqref{k0 case}  we get  \begin{equation}
	C_{i}k_0^{i-1}
	\  \leq	\ \abs{\rho_{k_0}}_F \label{first ineq}
	\end{equation}	
by Lemma~\ref{lem:growth of a_i}.  
	As $\rho_{k_0}$ is a single piece, Corollary~\ref{cor:rank i distortion} gives 
		\begin{equation}
 \abs{\rho_{k_0}}_F 
	\ \leq \  K_i\abs{\rho_{k_0}}_H^{i-1}.  \label{second ineq}
		\end{equation} 
	View $\rho_{k_0}$ as a product of $\rho_{k_0}'$ with a subword of $\pi^{-1}\varphi^r(\pi)$.  Then apply Proposition~\ref{prop:fbc subword length} to give 
			\begin{equation}
\abs{\rho_{k_0}}_H  
	\ \leq \ \abs{\rho_{k_0}'}_H + (2m+1)\abs{\pi^{-1}\varphi^r(\pi)}_H.   \label{third ineq}
		\end{equation}
	 To bound $\abs{\rho_{k_0}'}_H$, observe that $\rho_{k_0}'$ is a prefix of $\varphi^r(\rho_{k_0+r})$, and $\rho_{k_0+r}$ is a subword of $\pi^{-1}\varphi^r(\pi)$, since $r>0$.
		So, applying Proposition~\ref{prop:fbc subword length}, we first get
	$$\abs{\rho_{k_0}'}_H 
	\ \leq \ (2m+1)\abs{\varphi^r(\rho_{k_0+r})}_H.
	$$
	Then using that $\varphi^r(\rho_{k_0+r}) = s^{-r} \rho_{k_0+r} s^{r}$, we deduce that 
	$$
	\abs{\rho_{k_0}'}_H \ \leq \ (2m+1)\left(2r+\abs{\rho_{k_0+r}}_H\right).
	$$
	A last application of Proposition~\ref{prop:fbc subword length} then gives
	\begin{equation}\label{eq:bound rho'}
	\abs{\rho_{k_0}'}_H \ \leq \ (2m+1)\left(2r+(2m+1)\abs{\pi^{-1}\varphi^r(\pi)}_H\right).
	\end{equation}
	Together \eqref{first ineq}--\eqref{eq:bound rho'} show $k_0$ is  at most a constant times $\abs{r} + \abs{\pi^{-1}\varphi^r(\pi)}_H$, as required.

\begin{figure}[ht]
\begin{overpic}[%grid,
scale=1.0,unit=1mm]{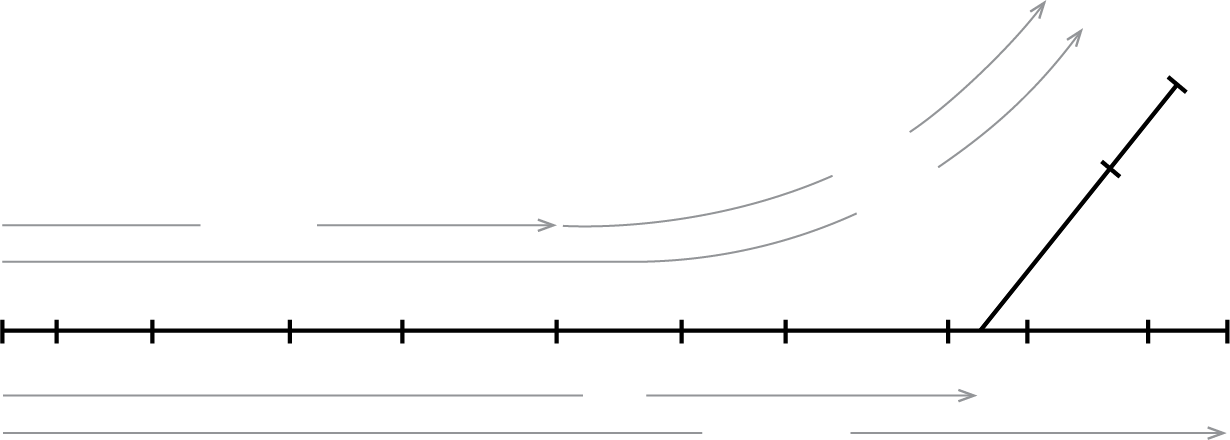}
 \put(61,0.5){\small{$\pi = a_i u$}}     
 \put(18.5,18){\small{$\varphi^r(a_i)$}}     
 \put(51,3.3){\small{$L$}}     
  \put(70.2,23){\small{\rotatebox{30}{$\varphi^r(u)$}}}     
  \put(72.5,19.5){\small{\rotatebox{30}{$\varphi^r(\pi)$}}}     
 \put(1.5,7.3){\small{$a_i$}}    
 \put(7,11.5){\small{$a_{i-1}$}}    
 \put(15,11.5){\small{$\varphi(a_{i-1})$ }}    
 \put(28,11.5){\small{$\cdots$}}    
 \put(35,11.5){\small{$\varphi^{r-1}(a_{i-1})$}}    
 \put(49,11.5){\small{$\varphi^{r}(\rho_1)$}}    
 \put(61,11.5){\small{$\cdots$}}    
 \put(67,11.5){\small{$\varphi^{r}(\rho_{k_0-1-r})$}}    
 \put(7,7.3){\small{$\rho_1$}}    
 \put(17,7.3){\small{$\rho_2$}}    
 \put(28,7.3){\small{$\cdots$}}    
 \put(40,7.3){\small{$\rho_r$}}    
 \put(51,7.3){\small{$\rho_{r+1}$}}    
 \put(61,7.3){\small{$\cdots$}}    
 \put(72,7.3){\small{$\rho_{k_0-1}$}}    
 \put(82,7.3){\small{$\rho_{k_0}$}}    
 \put(90.5,7.3){\small{$\cdots$}}    
 \put(99,7.3){\small{$\rho_{p}$}}    
 \put(91.5,25.5){\small{\rotatebox{48}{$\varphi^r(\rho_{p})$}}}    
\end{overpic}
 \caption{Cancellation as per Case 1b of the proof of Lemma~\ref{lem:fbc shared prefix}.}
  \label{fig:fbc commonprefix1b}
\end{figure}

	\case{1b} No cancellation.    
	
	We will show that   $L$ is a prefix of $\varphi^{k_0}(a_i)$ and that the result will again hold with $L = \Lambda_1$ and $\Lambda_2$ the empty word.
	
	Comparing pieces along  the common prefix $L  = a_i \rho_1 \cdots \rho_{k_0-1}\rho_{k_0}'$ of $\pi = a_i  \rho_1 \cdots \rho_p$ and    $$\varphi^r (\pi) \  = \  \varphi^r(a_i) \varphi^r(\rho_1) \cdots \varphi^r(\rho_p) \ = \    
a_i \,  a_{i-1}  \varphi(a_{i-1}) \cdots \varphi^{r-1}(a_{i-1}) \varphi^r(\rho_1) \cdots \varphi^r(\rho_p),$$ which  is a freely reduced word in this case,  
we claim that 
\begin{equation}\label{eq:case 1b}
\rho_k  \ = \ \varphi^{k-1}(a_{i-1}) \ \  \textrm{for \ $k=1, \ldots, k_0-1$.} 
\end{equation}
For $k \leq \min\{r,k_0-1\}$  we get $\rho_k  = \varphi^{k-1}(a_{i-1})$ immediately, so there is nothing left to show when $r\geq k_0-1$.	When $r<k_0-1$, if $k = r +1, \ldots, k_0-1$ then $\rho_k  = \varphi^r( \rho_{k-r})$, and this gives the claim inductively.
	
We claim that $\rho_{k_0}'$ is a prefix of $\varphi^{k_0-1}(a_{i-1})$. Indeed, if $k_0\leq r$, then this is immediate.
Meanwhile, if $k_0>r$, $\rho_{k_0}'$ is a subword of $\varphi^r(\rho_{k_0-r})$ which equals $\varphi^{k_0-1}(a_{i-1})$ by \eqref{eq:case 1b} since $r>0$. 
	 
It follows that $L$ is a prefix of $a_i a_{i-1} \varphi(a_{i-1}) \cdots \varphi^{k_0-1}(a_{i-1})$, which equals, as a word, $\varphi^{k_0}(a_i)$.

We complete this case by bounding $k_0$.  The process is similar to Case~1a.

	Since $\varphi^r(\rho_{k_0})$ is a subword of $\pi^{-1}\varphi^r(\pi)$, we can obtain a bound on the length of $\rho_{k_0}'$ as follows.
	Firstly, $\abs{\rho_{k_0}'}_H \leq (2m+1)\abs{\rho_{k_0}}_H$ by Proposition~\ref{prop:fbc subword length}.
	Then, using $\rho_{k_0} = s^{r}\varphi^{r}(\rho_{k_0})s^{-r}$, and that $\varphi^{r}(\rho_{k_0})$ is a subword of $\pi^{-1}\varphi^r(\pi)$, we get $\abs{\rho_{k_o}}_H \leq 2\abs{r} + (2m+1)\abs{\pi^{-1}\varphi^r(\pi)}_H$. Thus, we have
	$$\abs{\rho_{k_0}'}_H \ \leq \ (2m+1)\left(2\abs{r} + (2m+1)\abs{\pi^{-1}\varphi^r(\pi)}_H\right).$$

	Since $\varphi^r(\rho_{k_0-r}) = \varphi^{k_0-1}(a_{i-1})$, Lemma~\ref{lem:growth of a_i} and Corollary~\ref{cor:rank i distortion} imply that
	$$C_{i}(k_0-1)^{i-1} \ \leq \  \abs{\varphi^r(\rho_{k_0-r})}_F \ \leq \  K_i \abs{\varphi^r(\rho_{k_0-r})}_H^{i-1}.$$
	We can write $\varphi^r(\rho_{k_0-r})$ as a product of $\rho_{k_0}'$ and a subword of $\pi^{-1}\varphi^r(\pi)$.
	Hence, by Proposition~\ref{prop:fbc subword length},
	$$\abs{\varphi^r(\rho_{k_0-r})}_H
	\ \leq \ \abs{\rho_{k_0}'}_H + (2m+1) \abs{\pi^{-1}\varphi^r(\pi)}_H.
	$$
These displayed inequalities  combine to give an upper bound on $k_0$   implying the condition on $\Lambda_1$.  

	\case{2}  $\pi = a_i u a_i^{-1}$ for a reduced word $u$ of rank less than $i$. 
	
	Let $\pi_0 = a_i u$.  The only $a_i^{-1}$ in $\pi$ is at the end; ditto in $\varphi^r(\pi)$.  So if the common prefix $L$ of $\pi$ and $\varphi^r(\pi)$ is the whole of $\pi$, then $L=\varphi^r(\pi)$, also,  but that cannot be: by Lemma~\ref{lem:fixed pieces} $\pi$ is  not fixed by $\varphi$ and then by Proposition~\ref{prop:lower bound distortion} it is not fixed by  $\varphi^r$ (as  $r \neq 0$).  So $L$ is, in fact, a   prefix of  $\pi_0$.  

As $\varphi^r(\pi)$ is the free reduction of $\varphi^r(\pi_0)\varphi^r(a_i^{-1})$, the word $L$ is the concatenation $\Lambda_1\Lambda_2$ of a prefix $\Lambda_1$ of $\varphi^r(\pi_0)$ with a subword $\Lambda_2$ of $\varphi^r(a_i^{-1})$.
 But then $\Lambda_1$ is a common prefix of $\pi_0$ and $\varphi^r(\pi_0)$ (though it may not be the full common prefix)
so we deduce from Case~1 that $\Lambda_1$ is a prefix of $\varphi^k(a_i)$, 
where $\abs{k}$ is bounded by a constant times $r + \abs{\pi_0^{-1}\varphi^r(\pi_0)}_H$.
Since $\pi_0^{-1}\varphi^r(\pi_0) = a_i^{-1} \pi^{-1}\varphi^r(\pi) a_i$, we have $\abs{\pi_0^{-1}\varphi^r(\pi_0)}_H \leq \abs{\pi^{-1}\varphi^r(\pi)}_H + 2$ and the required bound on $\abs{k}$ follows.
\end{proof}

\begin{cor}\label{cor:fbc shared prefix of words}
	For all  $1\leq i \leq m$, there exists $A_i >0$ with the following property. For all freely reduced
   $w \in F$   of rank $i$ and all $r \in \Z$,  
  there exists  a freely reduced word $w_0 \in F$ of rank at most $i$ such that the following hold. 
  \begin{enumerate}\renewcommand{\theenumi}{$\mathcal{P}$\arabic{enumi}}
  	\item \label{shared prefix change w} 
  		If the free reduction of $w^{-1}\varphi^r(w)$ is $\alpha \beta$, with $\alpha$ a prefix of $w^{-1}$ and $\beta$ a suffix of $\varphi^r(w)$,
  		then the free reduction of $w_0^{-1}\varphi^r(w_0)$ is $\alpha \beta$, and $\alpha$ a prefix of $w_0^{-1}$ and $\beta$ a suffix of $\varphi^r(w_0)$.
  \item \label{shared prefix form} The longest common prefix $P$  of $w_0$ and $\varphi^r(w_0)$  
  has the form  $P =P_1 P_3 \cdots P_i$ where 
  \begin{itemize} 
  	\item $P_1$ is a  prefix  of $\varphi^k(a_t)$ for some $t\leq i$ and some $k\in \Z$ satisfying $\abs{k} \leq  A_i \left(\abs{\alpha \beta }_H + \abs{r}\right)$,  and 
  	\item  $P_j$ is a subword of $\varphi^r(a_j^{-1})$ for $j=3, \ldots, i$.
  \end{itemize}
  \end{enumerate}
\end{cor}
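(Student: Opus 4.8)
We may assume $r>0$: if $r=0$ take $w_0$ empty, and the case $r<0$ follows by applying the statement to $\varphi^{-1}$ in place of $\varphi$, since $\varphi^{-1}$ is a positive automorphism with respect to the basis $\{b_i\}$ of Section~\ref{subsec:positive inverse} and has identical piece combinatorics, so that Lemma~\ref{lem:fbc shared prefix} has a verbatim $\varphi^{-1}$-analogue. Let $R$ be the freely reduced word representing $w^{-1}\varphi^r(w)$ and fix the splitting $R=\alpha\beta$ as in condition~\ref{shared prefix change w}. The basic observation is that condition~\ref{shared prefix change w} forces $w_0 w^{-1}$ into $\Fix(\varphi^r)$, and $\Fix(\varphi^r)=\Fix(\varphi)=\<a_1,a_2a_1a_2^{-1}\>$ for all $r\neq 0$: by Lemmas~\ref{lem:fixed pieces} and \ref{lem:pieces exercise0} and Proposition~\ref{prop:lower bound distortion}, a strict rank-$i$ piece that is $\varphi^r$-fixed for some $r\neq 0$ must be $\varphi$-fixed, hence of rank $2$, so a $\varphi^r$-fixed reduced word has rank $\leq 2$ and lies in $\<a_1,a_2a_1a_2^{-1}\>$. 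Thus we seek $w_0=hw$ with $h\in\<a_1,a_2a_1a_2^{-1}\>$ (possibly depending on the splitting $\alpha\beta$); for any such $h$ one has $w_0^{-1}\varphi^r(w_0)=w^{-1}\varphi^r(w)$ automatically (as $\varphi^r(h)=h$), and as long as the construction keeps $hw$ and $h\varphi^r(w)$ freely reduced as written, the prefix/suffix clauses of condition~\ref{shared prefix change w} pass from $w$ to $w_0$. The whole problem thus becomes: choose $h$ so that the longest common prefix of $w_0$ and $\varphi^r(w_0)$ acquires the shape in condition~\ref{shared prefix form}.

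\textbf{Induction on rank, and locating the obstruction.} We induct on $i=\rank(w)$; the case $i=1$ is immediate ($w=a_1^k$, $R=1$, $w_0$ empty). For $i\geq 2$ write $w=\pi_1\cdots\pi_p$ for the rank-$i$ piece decomposition, so by Lemma~\ref{lem:pieces exercise} $\varphi^r(w)=\varphi^r(\pi_1)\cdots\varphi^r(\pi_p)$ with no cancellation between consecutive factors. Let $j$ be least with $\pi_j$ not $\varphi^r$-fixed; for $i\geq 3$ such a $j\leq p$ exists (else $w\in\Fix(\varphi^r)$ has rank $\leq 2$), and for $\ell<j$ the piece $\pi_\ell$ is $\varphi$-fixed, hence of type ``$u$'' with $u\in\<a_1,a_2a_1a_2^{-1}\>$, so $g:=\pi_1\cdots\pi_{j-1}\in\<a_1,a_2a_1a_2^{-1}\>$. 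Then the longest common prefix $P$ of $w$ and $\varphi^r(w)$ equals $g$ followed by the longest common prefix of $\pi_j\cdots\pi_p$ with $\varphi^r(\pi_j\cdots\pi_p)$, and the latter runs past $\pi_j$ precisely when $\pi_j$ happens to be a prefix of $\varphi^r(\pi_j)$ --- which, tracing pieces inside pieces, occurs exactly when $\pi_j$ begins with an initial run of the higher-rank analogue of the $a_2a_1^q$ example opening this section. Hence $P$ is ``too long'' relative to $|R|_H$ only because of the leading $\<a_1,a_2a_1a_2^{-1}\>$-segment $g$ and of such an initial run inside $\pi_j$ (and, at the bottom of the recursion, runs of the form $a_1^q a_2^{-1}$). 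These together make up a prefix of $w$ lying in a single coset of $\Fix(\varphi)$, and we choose $h\in\<a_1,a_2a_1a_2^{-1}\>$ so that $w_0=hw$ cancels it, keeping $w_0$ and $h\varphi^r(w)$ freely reduced.

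\textbf{Reading off the chunky form and bounding $k$.} After this cancellation the longest common prefix $P_0$ of $w_0$ and $\varphi^r(w_0)$ is the longest common prefix of the first piece $\pi$ of $w_0$ with $\varphi^r(\pi)$, where $\pi$ is not $\varphi^r$-fixed. If $i\geq 3$ and $\pi$ starts with $a_i$ (type $a_iu$ or $a_iua_i^{-1}$), Lemma~\ref{lem:fbc shared prefix} gives $P_0=\Lambda_1\Lambda_2$ with $\Lambda_1$ a prefix of $\varphi^k(a_i)$, $|k|\leq B_i(|\pi^{-1}\varphi^r(\pi)|_H+r)$, and $\Lambda_2$ a subword of $\varphi^r(a_i^{-1})$; set $P_1=\Lambda_1$, $t=i$, $P_i=\Lambda_2$, and $P_3=\cdots=P_{i-1}$ empty. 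If instead $\pi$ is of type $ua_i^{-1}$ or type ``$u$'', so begins with a letter of rank $<i$, then --- the trailing $a_i^{-1}$ never being reached since $\pi$ is not $\varphi^r$-fixed --- $P_0$ is the longest common prefix of a word of rank $<i$ with its $\varphi^r$-image, and the inductive hypothesis supplies the shape $P_1P_3\cdots P_{i-1}$, with $P_i$ empty; the case $i=2$ (where Lemma~\ref{lem:fbc shared prefix} is unavailable) is handled directly using that the surviving $\pi$ is then of type $a_1^q a_2^{-1}$ or lower. Either way $P_0=P_1P_3\cdots P_i$. Finally, since there is no cancellation among the $\varphi^r(\pi_\ell)$, the word $\pi^{-1}\varphi^r(\pi)$ sits inside $R$ up to cancellation at its two ends, so $|\pi^{-1}\varphi^r(\pi)|_H\leq (2m+1)|R|_H$ by Proposition~\ref{prop:fbc subword length}; combined with Lemma~\ref{lem:fbc shared prefix} and $|R|_H=|\alpha\beta|_H$ this gives $|k|\leq A_i(|\alpha\beta|_H+|r|)$ with $A_i$ depending only on $i$.

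\textbf{Where the difficulty lies.} The heart of the argument is the modification step: isolating exactly which initial segment of $w$ must be cancelled, verifying that it lies in one coset of $\Fix(\varphi)$ so that a single --- and suitably short --- left multiplier $h\in\<a_1,a_2a_1a_2^{-1}\>$ suffices, and checking that $w_0=hw$ and $h\varphi^r(w)$ remain freely reduced so that condition~\ref{shared prefix change w} survives (with the amount of $a_1$-power one leaves behind governed by the chosen splitting $\alpha\beta$). This is the $a_2a_1^q$ remark promoted to every rank and threaded through the occasionally fiddly bookkeeping of how the pieces of $w$ embed in the pieces of $\varphi^r(w)$; the reduction to a single piece via Lemma~\ref{lem:fbc shared prefix} and the rank induction are comparatively routine.
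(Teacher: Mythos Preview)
Your broad strategy---reduce to $r>0$, induct on rank, strip the $\Fix(\varphi)$-fixed initial pieces, and analyse the first non-fixed piece $\pi$ via Lemma~\ref{lem:fbc shared prefix} or induction---matches the paper's, and your observation that condition~\eqref{shared prefix change w} forces $w_0w^{-1}\in\Fix(\varphi^r)=\Fix(\varphi)$ is correct and implicit in the paper's construction.

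There is, however, a genuine gap in your treatment of the case where $\pi$ has type $ua_i^{-1}$. You argue that since the common prefix $P$ of $\pi$ and $\varphi^r(\pi)$ never reaches the trailing $a_i^{-1}$ in $\pi$, it is simply the longest common prefix of $u$ with $\varphi^r(u)$, so induction gives the shape $P_1P_3\cdots P_{i-1}$ with $P_i$ empty. The first clause is right---$P$ is a prefix of $u$---but the second does not follow: $P$ need not be a prefix of $\varphi^r(u)$. Writing $\varphi^r(\pi)$ as the free reduction of $\varphi^r(u)\,\varphi^r(a_i^{-1})$, cancellation at the junction can consume the entire suffix $\delta$ of $\varphi^r(u)$ past its common prefix with $u$ (and possibly eat back into that common prefix), so that $P$ continues beyond the longest common prefix of $u$ and $\varphi^r(u)$ into a subword of $\varphi^r(a_i^{-1})$. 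This is exactly how a nonempty $P_i$ arises and why the statement allows $P_3,\ldots,P_i$ rather than just $P_1$. The paper handles this as its Case~3, which requires a three-way split (labelled $\mathcal{C}1$--$\mathcal{C}3$ there) according to how far $\varphi^r(a_i^{-1})$ cancels into $\varphi^r(u_0)$; in each sub-case one must also re-derive $\alpha',\beta'$ and re-verify condition~\eqref{shared prefix change w}, and this bookkeeping is where most of the work lies. A secondary issue: your condition ``$hw$ and $h\varphi^r(w)$ freely reduced as written'' is not satisfied in the constructions you need (already in the $i=2$ case, replacing $\pi=a_2a_1^q$ by $\pi_0=a_2$ corresponds to $h=a_2a_1^{-q}a_2^{-1}g^{-1}$, and $hw$ is not reduced as written), so the passage of~\eqref{shared prefix change w} from $w$ to $w_0$ needs the kind of explicit check the paper performs rather than the blanket assertion you give.
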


\begin{proof}
	If  $w = \varphi^r(w)$, then we can take $\alpha$, $\beta$, and $w_0$ to be the empty word. 
	So assume   $w \neq \varphi^r(w)$. In particular, $r\ne 0$.
	
	The statement for $r<0$ will follow  from that for $r>0$ since we could instead consider the common prefix of $\bar{w} = \varphi^r(w)$ and $\varphi^{-r}(\bar{w})$, which of course equals  $P$.  So assume $r>0$.

	We will  induct on  $i = \rank(w)$.
	The case   $i=1$ is elementary:  $w = \varphi^r(w)$ and the result holds as we just explained.

	Assume $ i  >1$.  Let $w = \pi_1\cdots \pi_p$ be the rank-$i$ decomposition of $w$ into pieces.
	Our first step is to reduce the problem to a question concerning a single piece.
	Take $k$ minimal so that the longest common prefix of  $w$ and $\varphi(w)$ is a    subword of $\pi_1 \cdots \pi_{k}$.
	Let $\pi := \pi_k$. 
	It follows from Lemma~\ref{lem:pieces exercise} that $\pi_1$, \ldots,  $\pi_{k-1}$  are   fixed by $\varphi^r$.
	So \eqref{shared prefix change w}  amounts to: 
	\begin{enumerate}\renewcommand{\theenumi}{$\mathcal{P}$\arabic{enumi}$^{\prime}$}
	\item	\label{v cut down}
	If the free reduction of $w^{-1}\varphi^r(w)$ is $\alpha \beta$, with $\alpha$ a prefix of $w^{-1}$ and $\beta$ a suffix of $\varphi^r(w)$,
	then the free reduction of $ (\pi_k \cdots \pi_p)^{-1}\varphi^r(\pi_k \cdots \pi_p)$ is $\alpha \beta$, with $\alpha$ a prefix of $( \pi_k \cdots \pi_p)^{-1}\varphi^r(\pi_k \cdots \pi_p)$ and $\beta'$ a suffix of $\varphi^r(\pi)$.
	\end{enumerate}
	
	We will find a word $\pi_0$ such that the longest common prefix $P$ of $\pi_0$ and $\varphi^r(\pi_0)$ satisfies the conditions for  \eqref{shared prefix form}, and $\pi_0$ satisfies:
	\begin{enumerate}\renewcommand{\theenumi}{$\mathcal{P}$\arabic{enumi}$^{\prime\prime}$}
	\item \label{pi cut down}
		If the free reduction of $\pi^{-1}\varphi^r(\pi)$ is $\alpha' \beta'$, with $\alpha'$ a prefix of $\pi^{-1}$ and $\beta'$ a suffix of $\varphi^r(\pi)$,
	then the free reduction of $\pi_0^{-1}\varphi^r(\pi_0)$ is $\alpha' \beta'$, with $\alpha'$ a prefix of $\pi_0^{-1}$ and $\beta'$ a suffix of $\varphi^r(\pi_0)$.
	\end{enumerate}
	Then, setting $w_0 = \pi_0\pi_{k+1}\ldots \pi_p$, we will get $\alpha \beta = w_0^{-1}\varphi^r(w_0)$ in $F$ by \eqref{v cut down},  with $\alpha$ a prefix of $w_0^{-1}$ and $\beta$ a suffix of $\varphi^r(w_0)$, satisfying \eqref{shared prefix change w}.
	We claim that $P$ being the longest common prefix of $\pi_0$ and $\varphi^r(\pi_0)$ implies it is  also  the longest common prefix of $w_0$ and $\varphi^r(w_0)$. 
	Indeed, if $w_0 = \pi_0$ (as words), this is immediate.  
	And if $w_0 \ne \pi_0$, then either the prefixes are as claimed, or $\pi_0 = \varphi^r(\pi_0)$, implying $\pi^{-1}\varphi^r(\pi)$ is trivial, by \eqref{pi cut down}, contradicting our choice of $k$.
	
	Now we turn to finding this $\pi_0$.
		
	Assume that $i=2$, which is an exceptional case.
	By the minimality of $k$, we cannot have  $\pi$ equal to $a_1^q$ or $a_2a_1^qa_2^{-1}$ for some $q \in \Z$,   for that would imply that $\pi =  \varphi^r(\pi)$. 
	The remaining possibilities are that  $\pi$ is $a_2a_1^q$ or $a_1^q a_2^{-1}$ for some $q \in \Z$.  Take $\pi_0  = a_2$ or $\pi_0 = a_2^{-1}$, 
	respectively in these two cases.  
	 In the former case, $\alpha'$ is the empty word, and $\beta' = a_1^r$. In the latter, $\alpha' = a_2$ and $\beta' = a_1^{-r}a_2^{-1}$. Both satisfy \eqref{pi cut down}.
	The longest common prefix of $\pi_0$ and $\varphi^r(\pi_0)$, and hence also of $w_0$ and $\varphi^r(w_0)$, is then either $a_2$ or the empty word. Taking    $P_1$ to be $a_2$ or the empty word, accordingly, and $P_3, \ldots, P_i$ all the empty word  gives us the required form \eqref{shared prefix form}.

	Suppose  $i\geq 3$.  How we proceed depends on the  type and rank   of the piece $\pi$.
	
	\case{1}  $j := \rank(\pi) < i$.  
	
	Apply the induction hypothesis to get a freely reduced word $\pi_0$ of rank at most $j$  satisfying \eqref{pi cut down}
	and such that the longest common prefix of $\pi_0$ and $\varphi^r(\pi_0)$ has the form $P = P_1P_3\cdots P_j$, where $P_3,\ldots, P_j$ are each subwords of $\varphi^r(a_j^{-1})$, and $P_1$ a prefix of $\varphi^k(a_t)$, for some $t\leq j$ and some $\abs{k} \leq A_j(\abs{\pi^{-1}\varphi^r(\pi)}_H + \abs{r})$.
	Since $\pi^{-1}\varphi^r(\pi)$ is a subword of $\alpha\beta$, taking $A_i$ large enough that $A_i \geq (2m+1)A_j$ will mean, by Proposition~\ref{prop:fbc subword length}, that $P_1$ satisfies the requirements stated in this corollary.
	Finally, we take $P_{j+1},\ldots,P_i$ to all be the empty word.

	\case{2} The first letter of $\pi$ is $a_i$.	
	
   In this case we take $\pi_0 = \pi$, so \eqref{pi cut down} trivially holds.
	Lemma~\ref{lem:fbc shared prefix} gives us 
	the structure of $P$ as required, with $P_1=\Lambda_1$, with $P_3,\ldots,P_{i-1}$ being empty words, and with $P_i = \Lambda_2$. 
	We just note that the power $k$ in $P_1$ satisfies the required bound by taking $A_i$ large enough so that $A_i \geq (2m+1)B_i$, where $B_i$ is the constant from Lemma~\ref{lem:fbc shared prefix} (as in Case 1, this is because $\pi^{-1}\varphi^r(\pi)$ is a subword of $\alpha\beta$, and we can apply Proposition~\ref{prop:fbc subword length}).
	
	\case{3} $\pi = ua_i^{-1}$ with $j := \rank(u) <i$.

	By the inductive hypothesis, there is a word $u_0 \in F$ of rank at most $j$ such that 
	\begin{enumerate}\renewcommand{\theenumi}{$\mathcal{P}$\arabic{enumi}$^{\prime\prime\prime}$}
\item \label{eq: u to u_0 prefix}
	If the free reduction of $u^{-1}\varphi^r(u)$ is $\gamma \delta$, with $\gamma$ a prefix of $u^{-1}$ and $\delta$ a suffix of $\varphi^r(u)$,
	then the free reduction of $u_0^{-1}\varphi^r(u_0)$ is $\gamma \delta$, with $\gamma$ a prefix of $u_0^{-1}$ and $\delta$ a suffix of $\varphi^r(u_0)$,
	\end{enumerate} 
	and the maximal common prefix of $u_0$ and $\varphi^r(u_0)$ is of the form   
	$P_0 = P_1P_3\cdots P_j$, with $P_1,P_3\ldots,P_j$ as per \eqref{shared prefix form}.
	In particular, $P_1$ is a prefix of $\varphi^k(a_t)$, for some $t\leq j$ and some $k$ satisfying 
	\begin{equation}\label{eq:k bound case 3}
	\abs{k}  \ \leq \  A_j\left(\abs{u_0^{-1}\varphi^r(u_0)}_H + \abs{r}\right).
	\end{equation}
	
	Let $\pi_0 = u_0a_i^{-1}$.
	Since $\varphi^r(\pi_0)$ is the free reduction of $\varphi^r(u_0)\varphi^r(a_i^{-1})$, the common prefix $P$ of $\pi_0$ and $\varphi^r(\pi_0)$ is a concatenation of a common prefix of $u_0$ and $\varphi^r(u_0)$ (hence a prefix of $P_0$), and a (possibly empty) subword $P_i$ of $\varphi^r(a_i)^{-1}$.
	We may therefore write $P = P_1P_3\cdots P'_{j'}P_i$ for some $j' \leq j$, where $P'_{j'}$ is a prefix of $P_{j'}$.
	This fits with the structure in \eqref{shared prefix form}, although we still need to determine the bound on $k$, which we will do below, while we verify \eqref{pi cut down} holds.

\begin{figure}[ht]
\begin{overpic}[%grid,
scale=1.0,unit=1mm]{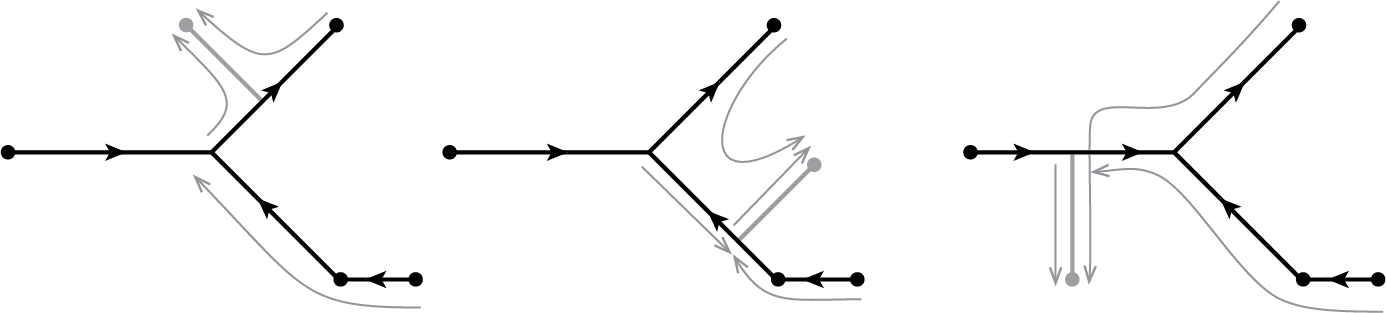}
 \put(-2,12.5){\small{$1$}}         
 \put(36.5,2){\small{$\pi_0$}}         
 \put(30,25){\small{$\varphi^r(u_0)$}}         
 \put(28,5){\small{$u_0$}}         
 \put(32,4){\small{$a_i$}}         
  \put(19,3){\small{$\alpha'$}}         
 \put(15,18){\small{$\beta'$}}         
  \put(24,9.5){\small{$\gamma$}}         
  \put(23.5,16.5){\small{$\delta$}}         
 \put(7,25){\small{$\varphi^r(\pi_0)$}}         
 \put(18,26){\small{$\varphi^r(a_j^{-1})$}}         
\put(8,15){\small{$P$}}         
 \put(35,12.5){\small{$1$}}         
 \put(73.5,2){\small{$\pi_0$}}         
 \put(67,25){\small{$\varphi^r(u_0)$}}         
 \put(64.5,5){\small{$u_0$}}         
 \put(69,4){\small{$a_i$}}         
  \put(63,-1){\small{$\alpha'$}}         
 \put(55,6){\small{$P_i$}}         
 \put(62.5,10){\small{$\beta'$}}         
  \put(60,9.5){\small{$\gamma$}}         
  \put(57.5,19.5){\small{$\delta$}}         
 \put(69.5,13.5){\small{$\varphi^r(\pi_0)$}}         
 \put(63,18){\small{$\varphi^r(a_j^{-1})$}}         
\put(45,15){\small{$P_0$}}         
 \put(79.6,12.5){\small{$1$}}         
 \put(117.5,2){\small{$\pi_0$}}         
 \put(111.5,25){\small{$\varphi^r(u_0)$}}         
 \put(109,5){\small{$u_0$}}         
 \put(113,4){\small{$a_i$}}         
  \put(99.5,3.5){\small{$\alpha'$}}         
 \put(86,7){\small{$\beta'$}}         
  \put(105,9.5){\small{$\gamma$}}         
  \put(105.5,16.5){\small{$\delta$}}         
 \put(87,-0.5){\small{$\varphi^r(\pi_0)$}}         
 \put(95.5,23){\small{$\varphi^r(a_j^{-1})$}}         
 \put(94,15){\small{$Q$}}         
 \put(85,15){\small{$P$}}  
 \end{overpic}
 \caption{Three cases of cancellation between $\varphi^r(a_i^{-1})$ and other words}
  \label{fig:fbc2019}
\end{figure}

	We seek $\alpha'$ and $\beta'$ satisfying \eqref{pi cut down}, so we need to understand the free reduction of $\pi_0^{-1} \varphi^r(\pi_0)$.
	We have 
	$$\pi_0^{-1} \varphi^r(\pi_0) 
	 \ = \  a_i u_0^{-1} \varphi^r(u_0) \varphi^r(a_i)^{-1}
	\ = \  a_i \gamma \delta \varphi^r(a_i)^{-1}.$$	
	Cancellation in $a_i \gamma \delta \varphi^r(a_i)^{-1}$ can only occur where $\delta$ abuts $\varphi^r(a_i)^{-1}$. 
	One of three things can occur. 
	Either 
	\begin{enumerate}\renewcommand{\theenumi}{$\mathcal{C}$\arabic{enumi}}
		\item \label{prefix1} $\varphi^r(a_i)^{-1}$ does not cancel into either $\gamma$ or $P_0$ (the left diagram of Figure~\ref{fig:fbc2019}),
		\item \label{prefix2} $\varphi^r(a_i)^{-1}$ completely cancels with $\delta$ and continues cancelling into $\gamma$ (the middle   diagram in Figure~\ref{fig:fbc2019}), or
		\item \label{prefix3} $\varphi^r(a_i)^{-1}$ completely cancels with $\delta$ and continues cancelling into $P_0$ (the right diagram in Figure~\ref{fig:fbc2019}).
	\end{enumerate}

	In \eqref{prefix1} we can take $\alpha' = a_i\gamma$ and $\beta'$ to be the free reduction of $\delta \varphi^r(a_i)^{-1}$.  Then it is straightforward to see that \eqref{pi cut down} holds.
	
	For \eqref{prefix2}, we take $P_i$ as above: it is the subword of $\varphi^r(a_i)^{-1}$ that cancels into $\gamma$.
	Then, setting $\alpha'$ to be the free reduction of $a_i \gamma P_i$  and $\beta'$ to be the free reduction of $P_i^{-1} \delta \varphi^r(a_i)^{-1}$, we can check that \eqref{pi cut down} is satisfied.
	Take $P=P_0P_i$.
	
	Finally, for \eqref{prefix3}, we let $Q$ be the suffix of $P_0$ that cancels into $\varphi^r(a_i)^{-1}$, and $P$ be the prefix of $P_0$ so that $P_0 = PQ$ as words. (Note in this case $P_i$ is empty.)
	Then $\alpha ' = a_i \gamma Q^{-1}$ and $\beta'$ equal to the free reduction of $Q \delta \varphi^r(a_i)^{-1}$ satisfy \eqref{pi cut down}.
	
	In each case,  the free reduction of $u_0^{-1} \varphi^r(u_0)$ is a product of a subword of the free reduction of $\pi_0^{-1}\varphi^r(\pi_0)$ with a subword of $\varphi^r(a_i)^{\pm 1}$.
	By \eqref{eq: u to u_0 prefix}, $\pi_0^{-1}\varphi^r(\pi_0)$ is equal to $\pi^{-1}\varphi^r(\pi)$, which is a subword of $\alpha\beta$.
	Hence by Proposition~\ref{prop:fbc subword length}, $\abs{ u_0^{-1} \varphi^r(u_0) }_H \leq (2m+1)\left( \abs{ \alpha\beta }_H + 2\abs{r} + 1\right)$.  In each case $P$ has the form required for  \eqref{shared prefix form}, and
	this bound, together with \eqref{eq:k bound case 3} and  increasing the value as $A_i$ if necessary, gives the required bound on $\abs{k}$. 
\end{proof}

\section{The inductive structure of $\mathcal{H}$-twisted conjugacy in $F$}
\label{H-twisted section1}

We explained in Section~\ref{reductions} that the conjugacy problem in $H$ amounts to a twisted conjugacy problem in $F$ which can take one of three forms.   The most involved of the three is what we refer to as the $\H$-twisted conjugacy problem.  We will show here that when this problem has a solution, it has a solution of one of a number of  particular forms.  The large number of possibilities for this form  leads to the following proposition  having a somewhat involved statement.  But all the subwords  are described in terms of the `constants'  $p, u_0,v_0,u_1,v_1$ or inductively in terms of a word $\hat{x}$ which is a solution to a lower rank instance of the same problem.  This will allow us to estimate the  lengths of solutions.  Those estimates will feed into upper bounds on the conjugator length of $H$.    Also this proposition will mean that solutions to the   $\H$-twisted conjugacy problem can found by searching though a polynomially sized family of possibilities.  This will feed into  polynomial time solutions to the conjugacy and conjugacy-search problems in $H$.

\begin{prop}\label{conjugator-like problem in F}
Suppose $i \in \set{1, \ldots, m}$, $p >0$ are  integers  and  $u_0,v_0,u_1,v_1,x$ are  reduced words on $a_1^{\pm 1}, \ldots, a_m^{\pm 1}$.  Suppose $x$ is  non-empty word  and has rank $i$.  Suppose  the concatenations $u_0  x  v_0^{-1}$ and $u_1^{-1} x v_1$ are reduced words and satisfy  	
		\begin{equation}   \label{twisted with x}
	    \varphi^{-p}(u_0  x  v_0^{-1})  \ = \  u_1^{-1} x v_1  \  \text{  in }  \   F. 
	  	\end{equation}
Then there exists a word $X$ 	on $a_1^{\pm 1}, \ldots, a_m^{\pm 1}$  	
that satisfies 	
\begin{equation}   \label{twisted with X}
	    \varphi^{-p}(u_0  X  v_0^{-1})  \ = \ u_1^{-1}  X v_1  \  \text{  in }  \   F 
\end{equation}
and takes the following form.
If $i = 1$, then 
\begin{enumerate}\renewcommand{\theenumi}{$\mathcal{X}$\arabic{enumi}} 
	\item \label{form of x 0} $X    =    U_1 U_2 U_3 V$ as words
\end{enumerate}
for some subwords $U_1$, $U_2$, and $U_3$  of
$(u_1  \varphi^{-p}(u_0))^{\pm 1}$ and some suffix $V$  of $v_1 \varphi^{-p}(v_0)$.

There exists a constant $C>0$ such that if $i>1$, then either
\begin{enumerate}\renewcommand{\theenumi}{$\mathcal{X}$\arabic{enumi}} 
	\setcounter{enumi}{1}
	\item \label{form of x 1} $X=x$ is a subword {$L$} of $\varphi^{-p}(u_0)$ or {$R$} of $\varphi^{-p}(v_0^{-1})$,
	\item \label{form of x 2} $X = L \ S \ M \  P \ R$ as words, or
	\item \label{form of x 3} $X   = x = L \ \hat{x} \ R$ as words,
\end{enumerate}
where 
\begin{itemize}
		\item $L$ is a subword of $\varphi^{-p}(u_0)$,   
		\item $S$ is either
		\begin{itemize}
			\item $S = S_i \cdots S_3\ S_1$ where %cases: (Ia, II: e=0)
			\begin{itemize}
				\item $S_1^{-1}$ is a prefix of $\varphi^{k}(a_t)$ for some $t\leq i$ and $\abs{k} \leq  C \left( \abs{u_0}_F + \abs{u_1}_F +  p \right)$,  
				\item $S_j$ is a subword of $\varphi^{-p}(a_j)$ for $j=3,\ldots, i$,
			\end{itemize} 
			\item a subword of $\hat{S}$ where $\varphi^{-p}(\hat{S})$ is a subword of $(u_1\varphi^{-p}(u_0))^{-1}$, or %cases (Ic, e>0)
			\item a subword of $\varphi^{-p}(\hat{S})$ where $\hat{S}$ is a subword of $\varphi^{-p}(u_0)$, %cases (Ic, e<0; II, e<0)
		\end{itemize}
		\item $M = M_1 M_2$ or $M_2^{-1}M_1^{-1}$, where
		\begin{itemize}
			\item  $M_1 = \pi \varphi^p(\pi) \cdots \varphi^{p(q-1)}(\pi)$,  where %cases (Ic)
			\begin{itemize}
				\item $qp \leq C (\abs{u_0}_H  + \abs{u_1}_H  + \abs{v_0}_H  + \abs{v_1}_H  + p)$, and
				\item $\varphi^{-p}(\pi)$ is  a concatenation of a subword of $(u_1\varphi^{-p}(u_0))^{-1}$ with $S$, %cases (Ic: e>0)
					or of $P$ with a subword of $(v_1 \varphi^{-p}(v_0))^{-1}$, %cases (Ic: e<0)
			\end{itemize}
			\item  $\varphi^{p}(M_2)$ is a subword of $\varphi^{-p}(u_0)$ or  $\varphi^{-p}(v_0^{-1})$, %cases (Ib) (Ic: e>0 for v, e<0 for u)
		\end{itemize}
		\item $P$ is either
		\begin{itemize}			
			\item $P = P_1\ P_3\cdots P_i$, where
				\begin{itemize}
					\item $P_1$ is a prefix of $\varphi^{k'}(a_t)$ for some $t\leq i$ and $\abs{k'} \leq C \left(\abs{v_0}_F +  \abs{v_1}_F +  p\right)$,
					\item $P_j$ is a subword of $\varphi^{-p}(a_j^{-1})$ for $j=3,\ldots, i$, or
				\end{itemize}
			\item a subword of $\varphi^{-p}(\hat{P})$ where $\hat{P}$ is a subword of $\varphi^{-p}(v_0^{-1})$, %cases (Ib:e>0) (Ic: e>0)
			\item a subword of $\hat{P}$ where $\varphi^{-p}(\hat{P})$ is a subword of $v_1\varphi^{-p}(v_0)$, %cases (Ib: e<0) (Ic: e<0)
		\end{itemize}	
		\item $R$ is a subword of $\varphi^{-p}(v_0^{-1})$,
		\item $\hat{x}$ has rank $j<i$ and satisfies $ \varphi^{-p}(\hat{u}_0  \hat{x}  \hat{v}_0^{-1})  \ = \ \hat{u}_1^{-1}  \hat{x} \hat{v}_1$ where  
		\begin{itemize}			
			\item  $\hat{u}_0\hat{u}_1$ is reduced and is a subword of $\varphi^{-p}(u_0)$, 
			\item  $\hat{v}_0\hat{v}_1$ is reduced and is a subword of  $\varphi^{-p}(v_0)$. 
		\end{itemize}
	\end{itemize}
 \end{prop}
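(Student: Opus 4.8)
The plan is to induct on the rank $i$ of $x$, reading \eqref{twisted with x} as an equality of reduced words in $F$ and peeling controlled ``chunks'' off the two ends of $x$. For the base case $i=1$, the word $x=a_1^{n}$ is fixed by $\varphi$, so $\varphi^{-p}(x)=x$ and \eqref{twisted with x} rearranges to the ordinary conjugacy relation $(u_1\varphi^{-p}(u_0))\,x=x\,(v_1\varphi^{-p}(v_0))$ in $F$; Lemma~\ref{CP in F lemma} then produces a conjugator $X$ that is a concatenation of a prefix of $(u_1\varphi^{-p}(u_0))^{-1}$ with a suffix of $v_1\varphi^{-p}(v_0)$, which is form \eqref{form of x 0} with two of the $U_j$ empty.

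Now suppose $i\ge 2$ and put $W:=u_1^{-1}xv_1$. Writing $x=\pi_1\cdots\pi_\ell$ for its rank-$i$ piece decomposition, Lemmas~\ref{lem:pieces exercise0} and~\ref{lem:pieces exercise} give that $\varphi^{-p}(x)=\varphi^{-p}(\pi_1)\cdots\varphi^{-p}(\pi_\ell)$ is the rank-$i$ piece decomposition of $\varphi^{-p}(x)$ with no inter-piece cancellation, and \eqref{twisted with x} says precisely that $W$ is the free reduction of $\varphi^{-p}(u_0)\cdot\varphi^{-p}(x)\cdot\varphi^{-p}(v_0^{-1})$. I would locate inside $W$ the part $B'$ of $\varphi^{-p}(x)$ that survives the cancellation against $\varphi^{-p}(u_0)$ on its left and $\varphi^{-p}(v_0^{-1})$ on its right, and compare this with the occurrence of $x$ between the prefix $u_1^{-1}$ and suffix $v_1$ of $W$. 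If $B'$ is empty or $x$ is contained in the part of $W$ contributed by $\varphi^{-p}(u_0)$ or by $\varphi^{-p}(v_0^{-1})$, then a short cancellation analysis handles $x$ directly, giving form \eqref{form of x 1} (or form \eqref{form of x 2} with trivial $S,M,P$, via a substitution furnished by Lemma~\ref{CP in F lemma}, when $x$ straddles a seam). Otherwise one writes $x=L\,x'\,R$, where $L$ is the part of the $\varphi^{-p}(u_0)$-contribution to $W$ lying to the right of $u_1^{-1}$ (hence a subword of $\varphi^{-p}(u_0)$), $R$ is the analogous subword of $\varphi^{-p}(v_0^{-1})$, and $x'=B'$ is simultaneously a window of the piece decomposition of $x$ and a window of that of $\varphi^{-p}(x)$.

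The analysis of $x'$ is the core of the proof. Matching its two piece-windows forces $\varphi^{p}$-periodicity among the rank-$i$ pieces in its interior, $\pi\mapsto\varphi^{p}(\pi)\mapsto\varphi^{2p}(\pi)\mapsto\cdots$ (here Lemma~\ref{lem:fixed pieces} and Proposition~\ref{prop:lower bound distortion} rule out the alternative that those pieces are $\varphi^{p}$-fixed, apart from the exceptional rank-$2$ pieces $a_2a_1^{q}a_2^{-1}$, treated separately), so the interior of $x'$ is an initial segment $M_1=\pi\,\varphi^{p}(\pi)\cdots\varphi^{p(q-1)}(\pi)$ of such an orbit, flanked by a short tail $M_2$ and by transition words coming from the offset of the two windows. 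The two places where this periodic block abuts $L$ and $R$ are common-prefix configurations of exactly the type analysed in Lemma~\ref{lem:fbc shared prefix} and Corollary~\ref{cor:fbc shared prefix of words}, and this produces the chunks $S$ and $P$ in their stated forms; the bound $\abs{k}\le C(\abs{u_0}_F+\abs{u_1}_F+p)$ on the exponent in $S_1$ (and its analogue for $P_1$) follows from the bound in Corollary~\ref{cor:fbc shared prefix of words} after rewriting the $H$-lengths there via $\abs{\varphi^{-p}(u_0)}_H\le\abs{u_0}_F+2p$, Proposition~\ref{prop:fbc subword length} and Corollary~\ref{cor:rank i distortion}, while $qp\le C(\abs{u_0}_H+\cdots+\abs{v_1}_H+p)$ comes from bounding how far the window-comparison can run before it repeats, using the same distortion estimates. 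Reassembling $X=L\,S\,M\,P\,R$, and taking in the periodic block the shortest representative of the coset of solutions supplied by Lemma~\ref{CP in F lemma}, gives a word of form \eqref{form of x 2} that still satisfies \eqref{twisted with X}, since each peeling step merely replaced a sub-block by another solution of the sub-equation governing it. The one remaining possibility is that the interior of $x'$ drops in rank to some $j<i$; it then satisfies a lower-rank instance $\varphi^{-p}(\hat u_0\hat x\hat v_0^{-1})=\hat u_1^{-1}\hat x\hat v_1$ with $\hat u_0\hat u_1$ a subword of $\varphi^{-p}(u_0)$ and $\hat v_0\hat v_1$ a subword of $\varphi^{-p}(v_0)$, and one keeps $X=x=L\,\hat x\,R$, which is form \eqref{form of x 3} (the inductive hypothesis being available for $\hat x$ if its internal structure is needed later).

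I expect the principal difficulty to be bookkeeping rather than a single hard idea: deciding cleanly in each branch whether $x$ is retained or replaced, pinning down which of the three sub-forms of $S$ and of $P$ and which of the two sub-forms of $M$ occurs, checking in every branch that the peeled words really are subwords of the advertised ones, and confirming that the reassembled $X$ still solves \eqref{twisted with X}. Keeping the exponent and orbit-length bounds uniform in $\abs{u_0}_F,\abs{u_1}_F,\abs{v_0}_F,\abs{v_1}_F$ and $p$, rather than in the much larger $\abs{\varphi^{\pm p}(\cdot)}_F$, is precisely what forces the systematic use of the $H$-metric and of the distortion results of Section~\ref{distortion section}.
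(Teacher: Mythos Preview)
Your outline for $i\ge 2$ is essentially the paper's approach: locate the overlap $\Pi$ of $x$ with the surviving part of $\varphi^{-p}(x)$, compare piece decompositions to extract a shift $e$, and then branch into the cases $e=0$ (Corollary~\ref{cor:fbc shared prefix of words} produces $S$ and $P$), $|e|$ large (everything is a subword of the boundary data), $0<|e|$ small (the periodic block $M_1$), and $\Pi$ confined to a single piece of lower rank (form~\eqref{form of x 3}). The vague parts of your sketch are the ones the paper also spends most of its effort on, so the plan is sound there.

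There is, however, a genuine gap in your base case $i=1$. You invoke Lemma~\ref{CP in F lemma} to produce a conjugator $X_0$ (a prefix of $(u_1\varphi^{-p}(u_0))^{-1}$ followed by a suffix of $v_1\varphi^{-p}(v_0)$) for the ordinary conjugacy relation $(u_1\varphi^{-p}(u_0))\,X_0 = X_0\,(v_1\varphi^{-p}(v_0))$, and then declare this $X_0$ to be your $X$. But \eqref{twisted with X} rearranges to $u_1\varphi^{-p}(u_0)\,\varphi^{-p}(X) = X\,v_1\varphi^{-p}(v_0)$, which agrees with the ordinary relation only when $\varphi^{-p}(X)=X$. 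The words $u_1\varphi^{-p}(u_0)$ and $v_1\varphi^{-p}(v_0)$ can have arbitrary rank, so $X_0$ need not have rank~$1$ and need not be fixed by $\varphi$; hence $X_0$ need not satisfy \eqref{twisted with X} at all.

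This is exactly why the statement allows \emph{three} subwords $U_1,U_2,U_3$, not one. The paper's fix is to compare $X_0$ with the original $x$: writing $x=\sigma^{l}x_0$ where $\sigma$ generates the centraliser of $u_1\varphi^{-p}(u_0)$, one takes $X=x_0$ if $\rank(\sigma)=1$ (then $\rank(x_0)=1$ too, so $\varphi$ fixes it), and otherwise takes $X=x$ itself and argues that, since $\rank(x)=1$ while $\rank(\sigma)\ge 2$, almost all of $\sigma^{l}$ must cancel into $x_0$, leaving $x$ as a prefix of $\sigma^{\pm 1}$ followed by a suffix of $x_0$. Expressing $\sigma$ in cyclically reduced form $\sigma_0^{-1}\sigma_1\sigma_0$ then exhibits that prefix of $\sigma^{\pm 1}$ as a concatenation of \emph{two} subwords of $(u_1\varphi^{-p}(u_0))^{\pm 1}$, accounting for $U_1U_2$, with $U_3$ coming from the prefix part of $x_0$ and $V$ from its suffix part. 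Without this extra analysis your $i=1$ case does not establish \eqref{twisted with X}.
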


A curious feature of Proposition~\ref{conjugator-like problem in F} is that if $u,  v \in H$ satisfy $uw=wv$ in $H$ for some $w \in F$ (not just in $H$), and    $w=u_0xv_0^{-1}$ as per the proposition,  then  $u_0Xv_0^{-1}$, which will also be in $F$, is another conjugator  and  also has this `nice' structure.

In general, this structure leads to a quadratic upper bound on the length (see Lemma~\ref{lem:chunk lenght bound} below).  To improve it to  a linear upper bound we need to replace the word $M_1$ appearing in \eqref{form of x 2} with $s^{qp}$ (see Lemma~\ref{lem:swap for a linear conjugator} below). 
We therefore swap $X$ for a word $\hat{X}$, which unlike $X$ may represent an element of $H \ssm F$.  Equations \eqref{twisted with x} and \eqref{twisted with X} may therefore not make sense for $\hat{X}$. 
For the iteration through case \eqref{form of x 3}, then, we will instead use:

\begin{lemma}\label{lem:annoying iteration}
	With the notation from Proposition~\ref{conjugator-like problem in F}, if $X$ has form \eqref{form of x 3}, 
	and if $\hat{X} \in H$  satisfies 
	$$s^p \ \hat{u}_0\  \hat{X}\  \hat{v}_0^{-1} \ s^{-p}  \ = \ \hat{u}_1^{-1} \ \hat{X} \ \hat{v}_1  \ \ \text{ in } H$$
	then 
	\begin{equation} \label{LR in w} 
	s^p \ u_0 \ L\ \hat{X}\ R \ v_0^{-1} s^{-p} \ = \ u_1^{-1} \ L \ \hat{X}\ R \ v_1  \  \  \text{ in } H.
	\end{equation}
	If   $\tilde{w} =  u_0 \ L\ \hat{X}\ R \ v_0^{-1}$, then \eqref{LR in w} amounts to $uw=wv$ in $H$, where $w = \tilde{w} s^r$, $u = \tilde{u} s^p$, $v = \tilde{v} s^p$, $\tilde{u} = u_0 u_1$, and $\varphi^{-r}(\tilde{v}) = v_0 v_1$.
\end{lemma}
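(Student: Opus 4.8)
The plan is to show that (\ref{LR in w}) is a formal consequence of the hypothesised identity for $\hat{X}$, by substituting it into the defining relation for case (\ref{form of x 3}) and exploiting how $L$ and $R$ interact with the twisting by $\varphi^{\pm p}$. Recall that in case (\ref{form of x 3}) we have $X = x = L\,\hat{x}\,R$ as words, and by Proposition~\ref{conjugator-like problem in F} the words $\hat{u}_0\hat{u}_1, \hat{v}_0\hat{v}_1$ are reduced subwords of $\varphi^{-p}(u_0), \varphi^{-p}(v_0)$ respectively, with $\varphi^{-p}(\hat u_0\hat x\hat v_0^{-1})=\hat u_1^{-1}\hat x\hat v_1$ in $F$. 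The key structural fact (which I would extract from the construction of case (\ref{form of x 3}) in the proof of Proposition~\ref{conjugator-like problem in F}) is that $L$ is precisely the part of $\varphi^{-p}(u_0)$ lying to the left of $\hat u_0\hat u_1$ inside that word, so that $\varphi^{-p}(u_0) = L\,\hat u_0\hat u_1 \cdot(\text{rest})$; similarly $\varphi^{-p}(v_0^{-1}) = (\text{rest})\cdot\hat v_1^{-1}\hat v_0^{-1}\,R^{-1}$, and correspondingly on the $u_1,v_1$ side. Put differently, $L$ and $R$ are exactly the ``collars'' that peel off when one passes from the rank-$i$ equation to the rank-$j$ sub-equation, and the passage is reversible.

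First I would make the substitution $\hat{X}\mapsto$ (a formal variable) into $s^p\,\hat u_0\,\hat X\,\hat v_0^{-1}\,s^{-p} = \hat u_1^{-1}\,\hat X\,\hat v_1$ in $H$, which is the hypothesis. I then pre-multiply both sides on the left by $u_1^{-1} L$ (viewed in $H$) and post-multiply on the right by $R\,v_1$, and separately manipulate the left-hand side using $u_1^{-1}L\cdot s^p = s^p\cdot \varphi^{-p}(u_1^{-1}L) = s^p\cdot\varphi^{-p}(u_1)^{-1}\varphi^{-p}(L)$. The point is to check that $\varphi^{-p}(L)\,\hat u_0 = \varphi^{-p}(u_0)\cdot(\text{the tail})$ telescopes correctly — i.e. that conjugating the $\hat{}$-equation by $L$ on the left and $R$ on the right, after absorbing the $s^{\pm p}$, reproduces exactly the $u_0,u_1,v_0,v_1$-equation with $\hat X$ replaced by $L\hat X R$. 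Concretely: starting from $s^p\hat u_0\hat X\hat v_0^{-1}s^{-p}=\hat u_1^{-1}\hat X\hat v_1$, conjugation yields $u_1^{-1}L\cdot s^p\hat u_0\,\hat X\,\hat v_0^{-1}s^{-p}\cdot R v_1 = u_1^{-1}L\,\hat u_1^{-1}\,\hat X\,\hat v_1\,Rv_1$; on the left $u_1^{-1}L s^p = s^p \varphi^{-p}(u_1^{-1}L)$ and one verifies $\varphi^{-p}(u_1)^{-1}\varphi^{-p}(L)\hat u_0 = \varphi^{-p}(u_1)^{-1}\varphi^{-p}(u_0) = u_1^{-1}s^p u_0 s^{-p}\cdot s^p$ up to the bookkeeping — here I would insert $1 = (\text{tail})(\text{tail})^{-1}$ at the appropriate spot so that $L\hat X R$ emerges as a single block, and symmetrically on the right with $\hat v_1 R v_1$ against $v_0^{-1}$. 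After collecting $s^{\pm p}$ to the outside one reads off $s^p\,u_0\,L\hat X R\,v_0^{-1}s^{-p} = u_1^{-1}\,L\hat X R\,v_1$, which is (\ref{LR in w}).

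For the last sentence: if $\tilde w = u_0\,L\,\hat X\,R\,v_0^{-1}$ and $w=\tilde w s^r$, $u=\tilde u s^p$, $v=\tilde v s^p$ with $\tilde u = u_0u_1$, $\varphi^{-r}(\tilde v)=v_0v_1$, then unwinding the normal-form multiplication in $H=F\rtimes_\varphi\Z$ exactly as in Section~\ref{reductions} (equation (\ref{twisted conjugacy})): $uw = wv$ in $H$ is equivalent to $\tilde u\,\varphi^{-p}(\tilde w) = \tilde w\,\varphi^{-r}(\tilde v)$ in $F$, i.e. $u_0u_1\,\varphi^{-p}(u_0 L\hat X R v_0^{-1}) = u_0 L\hat X R v_0^{-1}\,v_0v_1$, which simplifies to $u_1\,\varphi^{-p}(u_0)\,\varphi^{-p}(L\hat X R)\,\varphi^{-p}(v_0^{-1}) = L\hat X R\,v_1$ — and this is precisely the $H$-statement (\ref{LR in w}) rewritten with $s^{\pm p}$ conjugation replaced by $\varphi^{\mp p}$, using $\varphi^{-p}(g) = s^p g s^{-p}$.

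The main obstacle I anticipate is the bookkeeping in the middle step: verifying that the ``tails'' (the portions of $\varphi^{-p}(u_0), \varphi^{-p}(v_0)$ lying between the $\hat{}$-subwords and the collars $L,R$) cancel in pairs when one conjugates, rather than leaving stray factors. This requires knowing precisely, from the construction in Proposition~\ref{conjugator-like problem in F}, that the decompositions $\varphi^{-p}(u_0) = L\,\hat u_0\hat u_1$ and analogues hold \emph{as words} with nothing left over on the right of $\hat u_0\hat u_1$ — i.e. that $L$ together with $\hat u_0\hat u_1$ exhausts $\varphi^{-p}(u_0)$, and likewise $R$ with $\hat v_1^{-1}\hat v_0^{-1}$ exhausts $\varphi^{-p}(v_0^{-1})$ — which is exactly how case (\ref{form of x 3}) was set up. Once that word-level identity is in hand, everything else is a routine substitution that I would carry out in $H$ without any free-group cancellation analysis, since we are only asserting equality in $H$, not reducedness.
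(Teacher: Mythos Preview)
Your overall strategy—multiply the hypothesis by suitable elements on each side and simplify—is exactly what the paper does. The gap is in your ``key structural fact.'' You assert that $\varphi^{-p}(u_0) = L\,\hat u_0\hat u_1$ as words with nothing left over, with $L$ the prefix of $\varphi^{-p}(u_0)$ preceding $\hat u_0\hat u_1$. But $L$ is by definition the prefix of $x$ (not of $\varphi^{-p}(u_0)$) preceding $\Pi=\hat x$. In the situation of \eqref{form of x 3} (Case~\ref{case1} of the proof of Proposition~\ref{conjugator-like problem in F} with $e=0$ and $\Pi$ a proper internal subword of $\pi_a$) one has $L = \pi_1\cdots\pi_{a-1}\,\hat u_0$ and $Y = \varphi^{-p}(\pi_1\cdots\pi_{a-1})\,\hat u_1^{-1}$; combining this with \eqref{what cancellation} gives, as reduced words,
\[
\varphi^{-p}(u_0) \ = \ u_1^{-1}\,\pi_1\cdots\pi_{a-1}\,\hat u_0\,\hat u_1\,\varphi^{-p}(\pi_{a-1})^{-1}\cdots\varphi^{-p}(\pi_1)^{-1}.
\]
So $\hat u_0$ is already the \emph{suffix} of $L$, there is $u_1^{-1}$ to the left of $L$ inside $\varphi^{-p}(u_0)$, and there is a nontrivial tail after $\hat u_1$. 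Your claimed decomposition fails on all three counts. (There is also a sign slip: $g\,s^p = s^p\,\varphi^{p}(g)$, not $s^p\,\varphi^{-p}(g)$.)

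Concretely, multiplying the hypothesis on the left by $u_1^{-1}L$ and on the right by $Rv_1$ produces the right-hand side $u_1^{-1}L\,\hat u_1^{-1}\,\hat X\,\hat v_1\,R\,v_1$, and you have no mechanism for the stray $\hat u_1^{-1}$ and $\hat v_1$ to disappear. The paper instead multiplies on the left by $s^p u_0 s^{-p} Y \hat u_1$ (equal to $u_1^{-1}L\hat u_1$ by \eqref{what cancellation}, so the extra $\hat u_1$ cancels against $\hat u_1^{-1}$), and then invokes the identity $L = s^{-p}Y\hat u_1\,s^p\,\hat u_0$ in $H$—an immediate consequence of the piece-level formulas for $L$ and $Y$ displayed above—to reassemble the left-hand side as $s^p u_0 L \hat X\,\hat v_0^{-1}s^{-p}$. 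That second identity, relating $L$ and $Y$ through the piece decomposition of $x$, is precisely the ingredient your argument is missing.
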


We will prove this lemma after proving  Proposition~\ref{conjugator-like problem in F}.  Before we prove either, here is a lemma which is straight-forward, but which we highlight as we will call on it to remove a subword from $x$.  

\begin{lemma} \label{easy insert}
Suppose  $x=x_0 x_1$,  $\varphi^{-p}(u_0  x_0)  =  u_1^{-1} x_0$ ,  $\varphi^{-p}(x_1 v_0^{-1}  )  =  x_1v_1 $,   and $\varphi^{-p}(y) = y$ in  $F$.  Then $\varphi^{-p}(u_0  x_0 y x_1  v_0^{-1})   =   u_1^{-1} x_0 y x_1 v_1$  in $F$. 
\end{lemma}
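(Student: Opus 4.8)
This is a purely formal verification: we are given four equations in the free group $F$ and must deduce a fifth. The plan is to substitute the decomposition $x = x_0 x_1$ into the target equation and then carry out the cancellation using the three hypotheses $\varphi^{-p}(u_0 x_0) = u_1^{-1} x_0$, $\varphi^{-p}(x_1 v_0^{-1}) = x_1 v_1$, and $\varphi^{-p}(y) = y$. Since $\varphi^{-p}$ is a homomorphism, we have
\begin{equation*}
\varphi^{-p}(u_0 x_0 y x_1 v_0^{-1}) \ = \ \varphi^{-p}(u_0 x_0)\,\varphi^{-p}(y)\,\varphi^{-p}(x_1 v_0^{-1}).
\end{equation*}
Now apply the three hypotheses to the three factors on the right: the first factor equals $u_1^{-1} x_0$, the middle factor equals $y$, and the third factor equals $x_1 v_1$. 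Concatenating gives $u_1^{-1} x_0 y x_1 v_1$, which is exactly the desired right-hand side. This is the whole argument.

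First I would note that no reducedness hypotheses are actually needed: the statement is an identity of group elements, so we may work with the elements represented by the words rather than with reduced words, and the homomorphism property of $\varphi^{-p}$ together with associativity of the group multiplication does all the work. (If one wanted, one could split the hypothesis $\varphi^{-p}(u_0 x_0) = u_1^{-1} x_0$ into its ``left half'' reading, but there is no need.) The only thing to be careful about is the grouping of the letters in the word $u_0 x_0 y x_1 v_0^{-1}$: we must group it as $(u_0 x_0)\cdot y \cdot (x_1 v_0^{-1})$ before applying $\varphi^{-p}$ termwise, which is legitimate since these are just parenthesisations of the same product.

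There is essentially no obstacle here — the lemma is a bookkeeping statement whose role, as the surrounding text indicates, is to license the insertion of a $\varphi$-fixed word $y$ into the middle of a solution $x$ of the $\H$-twisted conjugacy equation without disturbing the equation. The only ``work'' is to make sure that the hypotheses have been stated with exactly the right left/right asymmetry ($u_1^{-1} x_0$ on one side, $x_1 v_1$ on the other, matching how $u_1$ and $v_1$ sit in \eqref{twisted with x}), and a single line of computation then closes it. I would present it in three lines: apply $\varphi^{-p}$ as a homomorphism, substitute the three hypotheses, and read off the conclusion.
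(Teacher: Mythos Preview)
Your proof is correct and matches the paper's approach: the paper does not even give a proof, merely introducing the lemma as ``straight-forward,'' and your three-line homomorphism-and-substitute argument is exactly the intended verification. (You are also right that the hypothesis $x = x_0 x_1$ is contextual rather than logically necessary.)
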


\begin{proof}[Proof of Proposition~\ref{conjugator-like problem in F}]
We begin with the  case $i=1$.  As $\rank(x) = 1$, $\varphi(x)=x$, and so  \eqref{twisted with x} rearranges to the conjugacy relation
	\begin{equation} \label{x as a conjugator}
u_1  \varphi^{-p}(u_0) x \ = \  x v_1  \varphi^{-p}(v_0) \  \text{  in }  \   F.
	  	\end{equation}
     Therefore, by Lemma~\ref{CP in F lemma}, there is \emph{some} $x_0 \in F$ which satisfies \eqref{x as a conjugator}  in place of $x$   (but may fail to satisfy  \eqref{twisted with x}   since $\varphi$ need not  fix  $x_0$) and  is the concatenation of some prefix  of $(u_1 \varphi^{-p}(u_0))^{ -1}$ with some suffix  of $v_1 \varphi^{-p}(v_0)$.

If $u_1  \varphi^{-p}(u_0) = 1$, then  \eqref{twisted with X}   holds with $X$ the empty word.  Assume, then, that $u_1  \varphi^{-p}(u_0) \neq 1$.  Since both $x$ and $x_0$ conjugate $u_1  \varphi^{-p}(u_0)$ to $v_1  \varphi^{-p}(v_0)$ in $F$, we have $x = \sigma^l x_0$ in $F$ for some integer $l$ and some reduced word $\sigma$ some power of which freely equals   $u_1  \varphi^{-p}(u_0)$.      
 If  $\rank(\sigma)=1$, then $\rank( x_0)= 1$ also and so  $\varphi(x_0) = x_0$ and  \eqref{twisted with X}   holds with $X = x_0$.     
  If, on the other hand, the $\rank(\sigma) \geq 2$, then  take $X=x$.  Then    \eqref{twisted with X} is \eqref{twisted with x} and so holds.  And,    as $\rank(x)=1$,   all of   $\sigma^l$ apart from some prefix $\sigma'$ of $\sigma$   (if $l>0$)  or $\sigma^{-1}$ (if $l<0$) must cancel  into  $x_0$ in  $\sigma^l x_0$, and so  $X$ is the concatenation of a prefix $\sigma'$ of $\sigma$ or  $\sigma^{-1}$ with a suffix of $x_0$.    
  
  Let   $\sigma_0$ be the maximal suffix of $\sigma$ such that  $\sigma_0^{-1}$ is  prefix of $\sigma$.  Then  there is a subword $\sigma_1$ of $\sigma$  such that  for all $c \in \Z$, as words  $\sigma^c = \sigma_0^{-1} \sigma^c_1 \sigma_0$.  So, as some power of $\sigma$ freely equals   $u_1  \varphi^{-p}(u_0)$, we have that  $\sigma_0^{-1} \sigma_1$ freely equals a prefix of  $(u_1  \varphi^{-p}(u_0))^{\pm 1}$ and  $\sigma_0$  freely equals a suffix.  So $\sigma$ freely equals the concatenation of two subwords of   $(u_1  \varphi^{-p}(u_0))^{\pm 1}$, and the same is true of $\sigma'$.

So in each case $X$ is   the   concatenation of three subwords of   $(u_1  \varphi^{-p}(u_0))^{\pm 1}$ with a suffix  of $v_1 \varphi^{-p}(v_0)$, completing the proof in the case $i=1$.  
  
Now assume $2\le i \le m$.
By \eqref{twisted with x}, $\varphi^{-p}(u_0)\varphi^{-p}(x)\varphi^{-p}(v_0^{-1}) = u_1^{-1}xv_1$ in $F$. The right-hand side is reduced, but there may be cancellation on the left at the start and end of $\varphi^{-p}(x)$.
So, after free reduction on the left, a (perhaps empty) subword of $\varphi^{-p}(x)$ remains. This subword will also be a subword of $u_1^{-1}xv_1$. Define $\Pi$ to be its overlap with $x$.

If $\Pi$ is the empty word, then  $x$ is a subword of either  $\varphi^{-p}(u_0)$ or $\varphi^{-p}(v_1^{-1})$, and so  $X=x$ has the form of $\eqref{form of x 1}$. So for the remainder of the proof we assume that $\Pi$ is nonempty.

As $\Pi$ is a subword of both $x$ and $\varphi^{-p}(x)$, we can define $L$, $R$, $Y$ and $Z$ so that
\begin{equation} \label{LRYZ defined}
x = L\ \Pi \ R  \qquad
\text{ and }  \qquad \varphi^{-p}(x) = Y \ \Pi \  Z \qquad \text{as words.}
\end{equation}
We have
$$u_1^{-1}\ L\ \Pi \ R \ v_1 \  =  \ \varphi^{-p}(u_0)\ Y \ \Pi \ Z\ \varphi^{-p}(v_0^{-1}) \qquad \text{in } F$$ 
by \eqref{twisted with x}.  
We claim that
\begin{enumerate}\renewcommand{\theenumi}{$\mathcal{E}$\arabic{enumi}}
	\item \label{structure of L} $L$ is a subword of $\varphi^{-p}(u_0)$,
	\item \label{structure of R} $R^{-1}$ is a subword of $\varphi^{-p}(v_0)$,
	\item \label{structure of Y} $Y^{-1}$ is a suffix of the freely reduced form of $u_1\varphi^{-p}(u_0)$, and
	\item \label{structure of Z} $Z$ is a suffix of  the freely reduced form of  $v_1\varphi^{-p}(v_0)$.
\end{enumerate}

By definition of $\Pi$,  
\begin{equation} \label{what cancellation}
\varphi^{-p}(u_0)  Y  \ \text{ freely reduces to }  \  u_1^{-1}\ L.
\end{equation}
Also, $\varphi^{-p}(u_0)$ and $Y$ are freely reduced words, so if $Y$ fully cancels into $\varphi^{-p}(u_0)$ on free reduction of $\varphi^{-p}(u_0)  Y$, then $Y^{-1}$ is a suffix of  $\varphi^{-p}(u_0)$ and  \eqref{what cancellation} gives us  \eqref{structure of L} and  \eqref{structure of Y}.
If, on the other hand, a non-empty suffix of $Y$ survives free reduction of $\varphi^{-p}(u_0)\ Y$, then $L$ is the empty word:   the last letter of $L$, were there one, would have to have been part of $\Pi$   since it would also have to have been the last letter of $Y$.  So \eqref{structure of L} trivially holds and   \eqref{structure of Y} again follows from \eqref{what cancellation}.

To complete the proof, when $\Pi$ is nonempty we need to explain how to replace $\Pi$ with some $\hat{\Pi}$ so that $X= L\hat{\Pi} R$ is of the required form.

Let $\pi_1 \cdots \pi_k$ be the rank-$i$ piece decomposition of $x$.  
By Lemma~\ref{lem:pieces exercise},   $\varphi^{-p}(\pi_1) \cdots 	\varphi^{-p}(\pi_k) $ is the rank-$i$ decomposition of $\varphi^{-p}(x)$ into pieces and, in particular, is reduced.  

Since $\Pi$ is a subword of $x$,  there are integers $a,b$ such that $x$ is a subword of $\pi_a \cdots \pi_b$. We choose $a$ and $b$ so that either
	\begin{enumerate}[I.]\renewcommand{\theenumi}{\Roman{enumi}}
	\item \label{case2}  $a<b$ and $\Pi = \pi_a' \pi_{a+1} \cdots \pi_{b-1}\pi_b'$, for some (perhaps empty) suffix $\pi'_a$  of $\pi_a$ and (perhaps empty) prefix $\pi'_b$ of $\pi_b$,  where $\pi_a' \ne \pi_a$ if $1 < a$ and $\pi_b'\ne \pi_b$ if $b<k$, or 
	\item \label{case1}  $a=b$ and $\Pi = \pi_a'$ for a nonempty subword $\pi_a'$ of $\pi_a$.  
	\end{enumerate} 
	As $\Pi$ is a subword of $\varphi^{-p}(x)$,  its rank--$i$  pieces   line  up with those in $\varphi^{-p}(\pi_1) \cdots 	\varphi^{-p}(\pi_k)$.  This tells us
	\begin{enumerate}\renewcommand{\theenumi}{$\mathcal{S}$\arabic{enumi}}
		\item \label{shift setup}  
		$\Pi$ is a subword of  $\varphi^{-p}(\pi_{a+e}) \cdots \varphi^{-p}(\pi_{b+e})$ for some $e \in \Z$.
		\newcounter{shiftcount}\setcounter{shiftcount}{\value{enumi}}
	\end{enumerate}  
	The value of $e$ will be important.  

  From \eqref{LRYZ defined} and \eqref{structure of L}--\eqref{structure of Z} we  make   the following observations.  

 	\begin{enumerate}\renewcommand{\theenumi}{$\mathcal{S}$\arabic{enumi}}
	\setcounter{enumi}{\theshiftcount}
	\item \label{shift outside1} 
	If $c<a$, then $\pi_c$ is a subword of $\varphi^{-p}(u_0)$.
	\item \label{shift outside4} 
	If $c>b$, then $\pi_c^{-1}$ is a subword of $\varphi^{-p}(v_0)$.
	\item \label{shift outside2} 
	If $c<a+e$, then $\varphi^{-p}(\pi_c)^{-1}$ is a subword of the freely reduced form of  $u_1\varphi^{-p}(u_0)$. 
	\item \label{shift outside3} 
	If $c>b+e$, then $\varphi^{-p}(\pi_c)$ is a subword of  the freely reduced form $v_1\varphi^{-p}(v_0)$.
	\setcounter{shiftcount}{\value{enumi}}
\end{enumerate}

\case{\ref{case2}} $\Pi = \pi'_a \pi_{a+1} \cdots \pi_{b-1} \pi'_b$.

We make the following further observations that apply in this case. 
They follow by considering $\Pi$ simultaneously as a subword of $x$, which has piece decomposition $\pi_1 \cdots \pi_k$, and of $\varphi^{-p}(x)$, with piece decomposition $\varphi^{-p}(\pi_1) \cdots \varphi^{-p}(\pi_k)$.
As mentioned, the breaks between pieces in $\Pi$ must line up in the two words $x$ and $\varphi^{-p}(x)$.

\begin{enumerate}\renewcommand{\theenumi}{$\mathcal{S}$\arabic{enumi}}
	\setcounter{enumi}{\value{shiftcount}}
\item \label{shift initial} $\pi'_a$ is a suffix of $\varphi^{-p}(\pi_{a+e})$ (as well as of $\pi_a$),
\item \label{shift} $\pi_i = \varphi^{-p}(\pi_{i+e})$ for $a+1 \leq i  \leq b-1$,   
\item \label{shift final} $\pi'_b$ is a prefix of $\varphi^{-p}(\pi_{b+e})$ (as well as of $\pi_b$).
\end{enumerate}

We will show that the proposition is satisfied with $X$ as per \eqref{form of x 2}.
We will divide into three subcases according to the value of $e$. Having $e$  positive is similar to $e$ negative. Indeed, taking inverses of both sides of equation \eqref{twisted with x} interchanges the roles of $u_0$ and $u_1$ with $v_0$ and $v_1$, respectively, and puts $x^{-1}$ in place of $x$. If $\hat{\pi}_i$ is the $i$--th piece in the rank $i$ piece decomposition of $x^{-1}$, then $\hat{\pi}_i = \pi_{k-i}^{-1}$, and this means that to satisfy \eqref{shift} with $\hat{\pi}_i$ instead, we use $-e$ instead of $e$.
	So in Cases~\ref{case2}b and \ref{case2}c below, when $e$ is negative, swapping the roles of $u_i$ and $v_i$ accordingly will give the structure of $X^{-1}$, and that of $X$ will then be apparent.

\case{\ref{case2}a} When $e =0$. 

If we remove  the pieces $\pi_{a+1}, \ldots , \pi_{b-1}$  from $x$ leaving $X_0 :=  L \pi_a' \pi_b' R$, then  
$\varphi^{-p}(u_0  X_0  v_0^{-1})   =  u_1^{-1}  X_0 v_1$  in $F$ by   Lemma~\ref{easy insert}. Next we will    replace $\pi_a'$ with $\hat{\pi}_a'$, and $\pi_b'$ with $\hat{\pi}_b$, which come from Corollary~\ref{cor:fbc shared prefix of words}, as explained below, giving $X := L \hat\pi_a' \hat\pi_b' R$.  We will show this too satisfies \eqref{twisted with X}.

As  $\pi_a'$ is a common suffix of $\pi_a$ and $\varphi^{p}(\pi_a)$  (and so a common prefix of $\pi_a^{-1}$ and $\varphi^{-p}(\pi_a)^{-1}$),  Corollary~\ref{cor:fbc shared prefix of words} gives us a word $\hat{\pi}_a$ so that
\begin{enumerate}\renewcommand{\theenumi}{$\mathcal{A}$\arabic{enumi}}
	\item \label{common suffix break} if $\alpha\beta$ is the free reduction of $\pi_a\varphi^{-p}(\pi_a)^{-1}$, with $\pi_a = \alpha \pi_a'$ and $\varphi^{-p}(\pi_a) = \beta^{-1} \pi_a'$ as words, 
	then $\alpha \beta$ is also the free reduction of  $\hat{\pi}_a\varphi^{-p}(\hat{\pi}_a)^{-1}$ in $F$, with $\hat\pi_a = \alpha \hat\pi_a'$ and $\varphi^{-p}(\hat\pi_a) = \beta^{-1} \hat\pi_a'$ as words, and
	\item \label{common suffix form} the common suffix $\hat{\pi}_a'$ of $\hat{\pi}_a$ and $\varphi^{-p}(\hat{\pi}_a)$ is of the form $\hat{\pi}_a' = S_i\cdots S_3 S_1$, where
	\begin{itemize}
		\item $S_1^{-1}$ is a prefix of $\varphi^k(a_t)$ for some $t\leq i$ and $\abs{k} \leq A_i\left(\abs{ \pi_a\varphi^{-p}(\pi_a)^{-1}}_H + p\right)$,
		\item $S_j$ is a subword of $\varphi^{-p}(a_j)$ for $j=3,\ldots, i$. 
	\end{itemize} 
\end{enumerate} 

  	First we check that $S=S_i\cdots S_3 S_1$ fits the scheme of the proposition.  To do this we need to bound $\abs{\pi_a\varphi^{-p}(\pi_a)^{-1}}_H$ so that \eqref{common suffix form} leads to the required bound on $\abs{k}$. We could use \eqref{structure of L} and \eqref{structure of Y} alongside Proposition~\ref{prop:fbc subword length}, but we can do better  as follows.
 By \eqref{what cancellation}, $LY^{-1} = u_1\varphi^{-p}(u_0)$ in $F$.	The last letters of $L$ and $Y$ must be different, since otherwise that letter could be added into $\Pi$.	So $LY^{-1}$ is reduced.
	The free reduction of $\pi_a (\pi_a')^{-1}$ is a suffix   of $L$, and that of $\varphi^{-p}(\pi_a)(\pi_a')^{-1}$ is a suffix  of $Y$.  
	Hence the free reduction of $\pi_a  \varphi^{-p}(\pi_a)^{-1}$ is a subword of the free reduction of $u_1\varphi^{-p}(u_0)$.
	So Proposition~\ref{prop:fbc subword length}  gives the first  inequality of:
	$$	\abs{\pi_a  \varphi^{-p}(\pi_a)^{-1}}_H  \ \leq \  (2m+1)\abs{u_1  \varphi^{-p}(u_0)}_H   \ \leq \  (2m+1)\left( \abs{u_0}_F +\abs{u_1}_F  +2p\right). $$
	We   deduce that there is a constant $C>0$ such that
	$$\abs{k}  \ \leq \  C \left( \abs{u_0}_F + \abs{u_1}_F + p \right).$$ 
	
  Similarly, $\pi'_b$ is a common prefix of $\pi_b$ and $\varphi^{-p}(\pi_{b})$, so Corollary~\ref{cor:fbc shared prefix of words}, tells us that there is  word $\hat\pi_b$ such that
 \begin{enumerate}\renewcommand{\theenumi}{$\mathcal{B}$\arabic{enumi}}
 	\item \label{common suffix break b}   
    if $\gamma\delta$ is the free reduction of $\pi_b^{-1}\varphi^{-p}(\pi_b)$, with $\pi_b = \pi_b' \gamma^{-1}$ and $\varphi^{-p}(\pi_b) = \pi_b' \delta$ as words, 
    then $\gamma \delta$ is also the free reduction of  $\hat{\pi}_b^{-1}\varphi^{-p}(\hat{\pi}_b)$ in $F$, with $\hat\pi_b = \hat\pi_b' \gamma^{-1}$ and $\varphi^{-p}(\hat\pi_b) = \hat\pi_b' \delta$ as words,  and
   \item \label{common suffix form a} the common prefix $\hat\pi_b'$ of $\hat\pi_b$ and $\varphi^{-p}(\hat\pi_b)$ is of the form $\pi_b' = P_1P_3\cdots P_i$, where 
 \begin{itemize}
 	\item $P_1$ is a prefix of $\varphi^{k'}(a_t)$ for some $t\leq i$ and $\abs{k'} \leq A_i\left(\abs{\pi_b^{-1}\varphi^{-p}(\pi_b)}_H + p\right)$,
 	\item $P_j$ is a subword of $\varphi^{-p}(a_j^{-1})$ for $j=3,\ldots, i$.
 \end{itemize} 
\end{enumerate}
Similar reasoning to the above give us $\abs{k'} \leq C ( \abs{v_0}_F +  \abs{v_1}_F + p)$.
It follows that $X  = L  S P R$ has form \eqref{form of x 2}, with $M$ the empty word.

To conclude, we need to prove $X$ will satisfy \eqref{twisted with X}.
First we work on the left. Using the notation from \eqref{common suffix break}, we have $L = \pi_1 \cdots \pi_{a-1} \alpha$ and $Y = \varphi^{-1}(\pi_1 \cdots \pi_{a-1}) \beta^{-1}$,
so $Y = \varphi^{-p}(L\alpha^{-1})\beta^{-1}$.
Then by \eqref{what cancellation}, 
$\varphi^{-p}(u_0 L \alpha^{-1}) \beta^{-1} 
= u_1^{-1} L$.
Rearranging  and using that $\alpha\hat\pi_a' = \hat\pi_a$ and $\beta^{-1}\hat\pi_a = \varphi^{-p}(\pi_a)$, we get
$$\varphi^{-p}(u_0 L \hat\pi_a') 
\ = \  u_1^{-1} L \beta \varphi^{-p}(\alpha \hat\pi_a')
\ = \  u_1^{-1} L \beta \varphi^{-p}(\hat\pi_a)
\ =  \ u_1^{-1} L \beta \beta^{-1} \hat\pi_a'
\ =  \ u_1^{-1} L \hat\pi_a'.
$$
 
Similar calculations on the right yield $\varphi^{-p}(\hat\pi_b' R u_0^{-1}) = \hat\pi_b' R v_1$. The left and right, working together, give us \eqref{twisted with X}.

\case{\ref{case2}b} When $\abs{e}>b-a-1$.

	First assume $e>0$.
	Write $\Pi = SMP,$
	where \blue{$S = \pi_a'$,}
	$M =M_2= \pi_{a+1} \cdots \pi_{b-1}$,
	and $P = \pi_b'$.
	 Then $M_2 = \varphi^{-p}(\pi_{a+1+e} \cdots \pi_{b-1+e})$ by \eqref{shift}, and $\varphi^p(M_2) = \pi_{a+1+e} \cdots \pi_{b-1+e}$ is a subword of $\varphi^{-p}(v_0)^{-1}$ by \eqref{structure of R}.
	Meanwhile,  $S$  is a subword of $\pi_a$, and $\varphi^{-p}(\pi_a)$ is a subword of $(u_1\varphi^{-p}(u_0))^{-1}$ by \eqref{shift outside2}.
	Finally, $P$ is a subword of $\varphi^{-p}(\pi_{b+e})$, 
	and $\hat{P} = \pi_{b+e}$ is a subword of $\varphi^{-p}(v_0^{-1})$ by \eqref{shift outside4}.

	Now assume that $e<0$. Then the above gives the structure for $\Pi^{-1}$, after swapping the roles of $u_i$ and $v_i$.
	Hence we get $\Pi = SMP$, where
	$S$ is a subword of  $\varphi^{-p}(\hat{S})$ where $\hat{S}$  is a subword of $\varphi^{-p}(u_0)$,
	$M=M_2$ and  $\varphi^{-p}(M_2)$ is a subword of $v_1\varphi^{-p}(v_0)$, and
	$P$ is a subword of $\hat{P}$, and $\varphi^{-p}(\hat{P})$ is a subword of $v_1\varphi^{-p}(v_0)$.

\case{\ref{case2}c} When  $0 < \abs{e} \leq b-a-1$.

Suppose $e>0$.  
We have  $$\Pi  \ = \  \pi_a' \pi_{a+1} \cdots \pi_{b-1}\pi_b'.$$  The number of pieces in $\Pi$ aside from $\pi_a'$  and $\pi_b'$ is $b-a-1$.    
Let $q$ be the maximal integer such that $qe \leq b-a-1$.    
Let $$\pi \  = \   \pi_{a+1} \cdots \pi_{a+e}.$$  
By repeated applications of \eqref{shift},
	$$\Pi = S M P \ \  \text{ in } F$$
	where $M= M_1 M_2$ and 
	\begin{align*}
	S  & = \  \pi_a' \\ 
	M_1 & =  \ \pi \varphi^p(\pi) \cdots \varphi^{p(q-1)}(\pi)     \\ 
	M_2 & =  \  \varphi^{pq}(\pi_{a+1} \cdots \pi_{b-1-qe}) \\ 
	P  & = \  \pi_b'. 
 	\end{align*}

 Then $S$ and $P$ are as in Case~\ref{case2}b. 
   Notice that $\varphi^{-p}(\pi)$ is a concatenation of a subword of $Y$  (so, by \eqref{structure of Y}, a  subword of the freely reduced form of  $(u_1\varphi^{-p}(u_0))^{-1}$) with $S= \pi_a'$.
	By \eqref{shift}, $\varphi^{p}(M_2) = \pi_{a+1+(q+1)e} \cdots \pi_{b-1+e}$.
    Note that $a+1+(q+1)e \ne b$, by our choice of $q$.
	So $a+1+(q+e)>b$, which implies $\varphi^p(M_2)$ is a subword of $\varphi^{-p}(v_0^{-1})$ by \eqref{structure of R}.  

We now begin the work necessary to establish the required  bound on $q$. 

First, for $a \leq c \leq c' < a+e$, we have that $\varphi^{-p}(\pi_c \cdots \pi_{c'})$ is a subword of $Y$, and so of the freely reduced form of  $(u_1\varphi^{-p}(u_0))^{-1}$ by \eqref{structure of Y}.
Also  $\varphi^{-p}(\pi_{a+e})$ is a product of subwords of $(u_1\varphi^{-p}(u_0))^{-1}$ and $\pi_a$.
So, by Proposition~\ref{prop:fbc subword length}, there is a constant $C>0$, depending only on $m$, such that 
\begin{equation}\label{eq:piece bound u}
\abs{\pi_{c} \cdots \pi_{c'}}_H \ \leq \  C \left(\abs{u_1 \varphi^{-p}(u_0)}_H +  p\right) \ \ \textrm{for $a \leq c \leq c' \leq a+e$.}
\end{equation}
Similarly,  
\begin{equation}\label{eq:piece bound v}
\abs{\pi_d \cdots \pi_{d'}}_H \ \leq \  C\left( \abs{v_1\varphi^{-p}(v_0)}_H  + p \right) \ \ \textrm{ for $b \leq d \leq d' \leq b+e$.}
\end{equation}

We now bound $qp$.
Let $a<c\leq a+e$
and choose $b\leq d < b+e$ so that $\pi_d = \varphi^{q'p}(\pi_{c})$, where $q'$ is $q$ or $q+1$ according to whether $c >b-1-qe$ or not.
Suppose $\varphi(\pi_{c}) \neq \pi_{c}$  and the rank of $\pi_c$ is $j\leq i$.  We will establish an upper bound on $q'p$
	by combining an upper bound on $\abs{\pi_d}_H$ from \eqref{eq:piece bound v}  with an understanding of how fast $\pi_c$ can grow under iterates of $\varphi^{-1}$ from Proposition~\ref{prop:lower bound distortion}.

 	We claim that we can choose such $c$ so that $j=i$ (that is,  so that $\pi_c$ has rank $i$).
	First assume that $e = 1$. Then \eqref{shift} gives $\pi_c = \varphi^{-p}(\pi_{c+1})$. In particular, both $\pi_c$ and $\pi_{c+1}$ have the same rank, which must therefore be $i$, since in a pair of adjacent pieces of a rank--$i$ decomposition of a word, at least one must have rank $i$.
	So we may assume $e>1$.
	Since we assume $\varphi(\pi_c) \neq \pi_c$, its rank satisfies $j\ge 2$ by Lemma~\ref{lem:fixed pieces}.
	Suppose $j<i$.
	Then both the the neighbours $\pi_{c+1}$ and $\pi_{c-1}$ of $\pi_c$ must have rank $i$.
	But $i > j \ge 2$, so  $\varphi(\pi_{c+1}) \neq \pi_{c+1}$ and $\varphi(\pi_{c-1}) \neq \pi_{c-1}$ by Lemma~\ref{lem:fixed pieces} again.
	The inequality $a < c\pm 1 \leq a+e$ holds for at least one of $c+1$ or $c-1$. We replace $c$ with the corresponding number and may therefore assume $j=i$.

If $(q'p)^{i-1} < \frac{1}{C_i}{\abs{\pi_{c}}_F}$, then  $C_i^\frac{1}{i-1}q'p
\ < \  \abs{\pi_{c}}_F^\frac{1}{i-1}$, and so
 Corollary~\ref{cor:rank i distortion} and then \eqref{eq:piece bound u} imply
$$C_i^\frac{1}{i-1}q'p
\ < \ {K_i}^\frac{1}{i-1}  \abs{\pi_{c}}_H
\ \leq \ {K_i}^\frac{1}{i-1}   C\left( \abs{ u_1 \varphi^{-p}(u_0)}_H + p \right).$$
If, on the other hand, $(q'p)^{i-1} \geq \frac{1}{C_i}{\abs{\pi_{c}}_F}$, then, we may apply Proposition~\ref{prop:lower bound distortion}   to $\pi_d = \varphi^{q'p}(\pi_{c})$, giving
$$\abs{ \pi_d}_F  \ = \   \abs{\varphi^{q'p}(\pi_{c})}_F \ \geq  \ \left(C_i^{\frac{1}{i-1}}q'p - {\abs{\pi_{c}}_F}^{\frac{1}{i-1}}\right)^{i-1}.$$
Rearranging and then combining this with Corollary~\ref{cor:rank i distortion} and inequalities \eqref{eq:piece bound u} and \eqref{eq:piece bound v} yields 
$$C_i^{\frac{1}{i-1}}q'p 
\ \leq \ {K_i}^{\frac{1}{i-1}} \left( \abs{\pi_{c}}_H 
+ \abs{\pi_{d}}_H\right)
\ \leq \  {K_i}^{\frac{1}{i-1}} C \left( \abs{ u_1 \varphi^{-p}(u_0)}_H + \abs{  v_1 \varphi^{-p}(v_0)}_H + 2p\right).$$  Increasing the value of $C$ if necessary, we then get
\begin{equation}
\label{eq:bound on qp}
qp \ \leq \   C (\abs{u_0}_H + \abs{u_1}_H + \abs{v_0}_H + \abs{v_1}_H + p).
\end{equation} 
 
So, provided there is some such $c$ with $\varphi(\pi_c) \neq \pi_c$, we have the required bound on $qp$, and taking $\hat{\Pi} = \Pi = SMP$ gives an $X$ satisfying \eqref{form of x 2}.

If, on the other hand, $\pi_{c}$ is fixed by $\varphi$ for all $a<c\leq a+e$
then we may cut a big chunk out of $\Pi$, since then $\varphi^p(\pi)=\pi$ and
$M = \pi^{pq}\varphi^{pq}(\pi_a \cdots \pi_{b-1-qe})$.
By Lemma~\ref{easy insert} we can remove $\pi^{pq}$,  and then  $\hat{\Pi} =  S M P$, where $M=M_2$, gives an $X$ satisfying the proposition.

When $e<0$, we get the structure of $\Pi^{-1}$ and of $\hat{\Pi}^{-1}$ from the above argument, once  the roles of $u_i$ and $v_i$ have been swapped.
As above, $S$ and $P$ will be as in Case~\ref{case2}b.
For $M$, we need to change the order of $M_1$ and $M_2$, but we have to be careful as taking the inverse of $M_1$ changes its structure.
The easiest way to express this is to say $M = M_2^{-1}M_1^{-1}$, with $M_1$ and $M_2$ obtained as above, but with the $u_i$  and $v_i$ exchanged.

\case{\ref{case1}} $\Pi=\pi_a'$.

By \eqref{shift setup},  $\Pi$ is a subword of $\varphi^{-p}(\pi_{a+e})$.   
If $e<0$,  then    by \eqref{shift outside1}, $\pi_{a+e}$ is a subword of $\varphi^{-p}(u_0)$. 
So $\hat{\Pi} = \Pi$   satisfies the conditions of form \eqref{form of x 2} of the proposition with  
$S = \Pi$,  $\hat{S} = \pi_{a+e}$,  and $M$ and $P$ both the empty word.  If $e  >  0$, then we take  $\hat{\Pi} = \Pi$  and it similarly satisfies the conditions  with $S$  and $M$  both the empty word and $P = \Pi$, which is a subword of $\varphi^{-p}(\pi_{a+e})$ and $\hat{P} =  \pi_{a+e} $  is a subword of  $\varphi^{-p}(v_0^{-1})$.

On the other hand, assume $e = 0$.
If $\Pi$ is either a prefix or a suffix of $\pi_a$ then we can apply Corollary~\ref{cor:fbc shared prefix of words}.
If $\Pi$ is a prefix, we replace it with $\hat{\Pi} = P = P_1 P_3 \cdots P_i$, as in Corollary~\ref{cor:fbc shared prefix of words}.
The proof that \eqref{twisted with X} holds, and of the bound on the $k$  in $P_1$ are the same as for  Case \ref{case2}a (treating $\pi_a'$ as the empty word  and $b=a+1$).
If $\Pi$ is instead a suffix, replace it with $\hat{\Pi} = S = S_i \cdots S_3S_1$ using Corollary~\ref{cor:fbc shared prefix of words}, and a similar check gives \eqref{twisted with X} and a corresponding bound on $k'$.

What remains is to consider when $\Pi$ is not a prefix or suffix of $\pi_a$. Then it has rank $j<i$. This is where the form \eqref{form of x 3} occurs.
As reduced words, write $\pi_a=\hat{u}_0\hat{x}\hat{v}_0^{-1}$ and $\varphi^{-p}(\pi_a) = \hat{u}_1^{-1}\hat{x}\hat{v}_1$, where $\hat{x} = \pi_a' = \Pi$.    
So $\rank(\hat{x}) < i$ and $\varphi^{-p}(\hat{u}_0\hat{x}\hat{v}_0^{-1}) = \hat{u}_1^{-1}\hat{x}\hat{v}_1$, as required. 
Since $\hat{u}_0$ is a suffix of $L$ and $\hat{u}_1$ is a prefix of $Y^{-1}$, and there is no cancellation between $L$ and $Y^{-1}$ by the definition of $\Pi$, we can deduce from \eqref{what cancellation} that $\hat{u}_0\hat{u}_1$ is reduced and is a subword of $\varphi^{-p}(u_0)$. 
\end{proof}  

\begin{proof}[Proof of Lemma~\ref{lem:annoying iteration}]
		By hypothesis
	\begin{equation}\label{eq:Xhat}
	s^p \ \hat{u}_0 \ \hat{X} \ \hat{v}_0^{-1} \ s^{-p} \ =\  \hat{u}_1^{-1} \ \hat{X} \ \hat{v}_1  \ \ \text{ in } H.
	\end{equation}
	By \eqref{what cancellation},
	$s^p \ u_0   \ s^{-p} \  Y \  \hat{u}_1 = u_1^{-1} L\hat{u}_1$ in $H$, which together with  \eqref{eq:Xhat} gives	 
		\begin{equation}\label{eq:Xhatmult}
	 ( s^p \ u_0 \ s^{-p} \  Y \  \hat{u}_1 ) \ (   s^p \ \hat{u}_0 \ \hat{X} \ \hat{v}_0^{-1} \ s^{-p}) 
	\ = \ (u_1^{-1} \ L \  \  \hat{u}_1)  \ ( \hat{u}_1^{-1} \ \hat{X} \ \hat{v}_1)  \ \ \text{ in } H.
		\end{equation}
	By hypothesis, $X$ has form \eqref{form of x 3}, and as per Case~\ref{case1} of our proof of  Proposition~\ref{conjugator-like problem in F}, 	 $L = \pi_1 \cdots \pi_{a-1} \hat{u}_0$ and $Y = \varphi^{-p} ( \pi_1 \cdots \pi_{a-1}) \hat{u}_1^{-1}$. 
	Comparing these expressions for $L$ and $Y$ we get that  in $H$ we have  $L = s^{-p} Y \hat{u}_1 s^p \hat{u}_0$, the right-hand side of which is a substring of the left side of \eqref{eq:Xhatmult}.  
	So  substituting accordingly into the left and cancelling the $\hat{u}_1  \hat{u}_1^{-1}$ from the right, we get 	
	\begin{equation}\label{eq:leftfixedup}
s^p \ u_0 \ L \ \hat{X} \ \hat{v}_0^{-1} \ s^{-p}  
	 =   u_1^{-1} \ L \    \ \hat{X} \ \hat{v}_1   \ \ \text{ in } H.
	 \end{equation} 

	In the same manner as we derived  \eqref{eq:leftfixedup} from \eqref{eq:Xhat}  using \eqref{what cancellation},
	 calculations on the right-hand ends using the equation $Z\varphi^{-p}(v_0^{-1}) = Rv_1$  will  derive      \eqref{LR in w} from \eqref{eq:leftfixedup}.
	
	The final part of the lemma follows from the discussion in Section~\ref{reductions} or by direct calculation.
\end{proof}

\section{Solving the $\H$-twisted conjugacy problem} \label{H-twisted section2}

\begin{lemma}\label{lem:chunk lenght bound}
There exists a constant $K>0$, depending only on $m$, such that  the lengths in $H$ of
the elements $U_1$, $U_2$, $U_3$, $V$,   $L$, $S$, $P$, $M_2$,  $R$, $S$, $\hat{u}_0$, $\hat{u}_1$, $\hat{v}_0$,   $\hat{v}_1$, and $\pi$  
that arise in Proposition~\ref{conjugator-like problem in F} are all at most  $K \Sigma$, where   
$$\Sigma \ =  \ (\abs{u_0}_H  + \abs{u_1}_H  + \abs{v_0}_H  + \abs{v_1}_H  + p).$$
Moreover, $\abs{M_1}_H \leq K \Sigma^2.$
\end{lemma}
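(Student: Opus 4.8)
The plan is to go through the explicit list of subwords provided by Proposition~\ref{conjugator-like problem in F}, case by case, and bound each in $\abs{\cdot}_H$ using the tools already assembled: Proposition~\ref{prop:fbc subword length} (a subword of the $F$-part of $h\in H$ has $H$-length at most $(2m+1)\abs{h}_H$), the relations $\varphi^{\pm p}(z) = s^{\mp p} z s^{\pm p}$ in $H$ (so $\abs{\varphi^{\pm p}(z)}_H \le 2p + \abs{z}_H$), and the bounds on the exponents $k,k'$ and on $qp$ already recorded in the statement of Proposition~\ref{conjugator-like problem in F}. First I would fix notation: $\abs{u_i}_H, \abs{v_i}_H \le \Sigma$ trivially, and $\abs{u_1\varphi^{-p}(u_0)}_H \le \abs{u_1}_H + 2p + \abs{u_0}_H \le 3\Sigma$ (similarly for $v$), and $\abs{\varphi^{-p}(u_0)}_H \le 2p + \abs{u_0}_H \le 3\Sigma$, so that every ``ambient'' word into which the chunks embed has $H$-length at most $3\Sigma$; applying Proposition~\ref{prop:fbc subword length} then shows any single subword of such a word has $H$-length at most $3(2m+1)\Sigma$.

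The easy chunks are $L$, $R$, $U_1,U_2,U_3$, $V$, $\hat u_0,\hat u_1,\hat v_0,\hat v_1$, $M_2$, and the ``subword'' alternatives for $S$ and $P$: each of these is (by the proposition) a subword of one of $\varphi^{-p}(u_0)$, $\varphi^{-p}(v_0^{-1})$, $(u_1\varphi^{-p}(u_0))^{\pm1}$, $v_1\varphi^{-p}(v_0)$, or of $\varphi^{-p}$ of such a subword (at worst one extra application of $\varphi^{-p}$, costing an additive $2p\le 2\Sigma$), so each is bounded by $K_0\Sigma$ for a suitable $K_0=K_0(m)$; for $U_1,U_2,U_3,V$ in case $\mathcal X1$ the same argument applies with $(u_1\varphi^{-p}(u_0))^{\pm1}$ and $v_1\varphi^{-p}(v_0)$ as the ambient words. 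For $\pi$: in case $\mathcal X2$, $\varphi^{-p}(\pi)$ is a concatenation of a subword of $(u_1\varphi^{-p}(u_0))^{-1}$ with $S$ (or of $P$ with a subword of $(v_1\varphi^{-p}(v_0))^{-1}$), so $\abs{\varphi^{-p}(\pi)}_H \le 3(2m+1)\Sigma + \abs{S}_H$ (resp.\ with $P$), hence $\abs{\pi}_H \le 2p + 3(2m+1)\Sigma + \max(\abs S_H,\abs P_H)$, which will be $\le K\Sigma$ once $S,P$ are bounded. The chunks $S$ (in its ``$S_i\cdots S_3 S_1$'' form) and $P$ (in its ``$P_1 P_3\cdots P_i$'' form) are handled by: each $S_j$ ($j\ge3$) is a subword of $\varphi^{-p}(a_j)$, so $\abs{S_j}_H \le 2p + \abs{a_j}_H \le 2\Sigma+1$, and there are at most $m$ of them; and $S_1^{-1}$ is a prefix of $\varphi^{k}(a_t)$ with $\abs k \le C(\abs{u_0}_F+\abs{u_1}_F+p)$, so $\abs{S_1}_H \le 2\abs k + \abs{a_t}_H \le 2C(\abs{u_0}_F+\abs{u_1}_F+p)+1$. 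Here there is a subtlety worth flagging, though not an obstacle: the bound on $k$ is in terms of $\abs{u_0}_F,\abs{u_1}_F$, the \emph{free}-group lengths, not $\abs{\cdot}_H$; but since $u_0,u_1$ are honest reduced words appearing as inputs, and in the intended application these are subwords of $\tilde u,\tilde v$ controlled by Proposition~\ref{prop:fbc subword length}, it suffices here to observe $\abs{u_0}_F = \ell(u_0)$ and treat $\ell(u_0)+\ell(u_1)+p$ as absorbed into $\Sigma$ — I would either state this convention explicitly or, more cleanly, re-examine whether the occurrences of $\abs{\cdot}_F$ in Proposition~\ref{conjugator-like problem in F} can be read as $\abs{\cdot}_H$; in any case $\abs{S}_H \le \sum_{j=3}^i \abs{S_j}_H + \abs{S_1}_H \le K_1\Sigma$, and symmetrically $\abs{P}_H \le K_1\Sigma$.

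Finally, $\abs{M_1}_H$: we have $M_1 = \pi\,\varphi^p(\pi)\cdots\varphi^{p(q-1)}(\pi)$, a concatenation of $q$ words, the $\ell$-th of which is $\varphi^{p\ell}(\pi) = s^{-p\ell}\pi s^{p\ell}$, hence of $H$-length at most $2p\ell + \abs{\pi}_H \le 2pq + \abs\pi_H$. Summing, $\abs{M_1}_H \le q(2pq + \abs\pi_H) = 2pq^2 + q\abs\pi_H$. Now use $qp \le C\Sigma$ from \eqref{eq:bound on qp} (equivalently the bound displayed in Proposition~\ref{conjugator-like problem in F}): then $2pq^2 = 2q(qp) \le 2q\cdot C\Sigma$, and since $q \le qp \le C\Sigma$ (as $p\ge1$) this is $\le 2C^2\Sigma^2$; likewise $q\abs\pi_H \le C\Sigma\cdot K\Sigma$. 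Hence $\abs{M_1}_H \le K'\Sigma^2$ for a suitable $K'=K'(m)$, and taking $K$ to be the maximum of all the constants produced above completes the proof. The only part requiring genuine care rather than routine estimation is the $\abs{M_1}_H$ bound — specifically making sure the exponent tower $s^{p\ell}$ really only costs $2p\ell$ (not something worse) and that $q \le C\Sigma/p \le C\Sigma$, so that the two factors of ``$q$'' in $2pq^2$ can be split as one $qp \le C\Sigma$ and one $q \le C\Sigma$ — this is exactly what forces the quadratic, and no better, bound; I expect this to be the main (modest) obstacle, together with the bookkeeping of which ambient word each chunk sits inside across the many sub-cases of Proposition~\ref{conjugator-like problem in F}.
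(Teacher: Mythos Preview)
Your proof is correct and takes essentially the same approach as the paper: apply Proposition~\ref{prop:fbc subword length} and the identity $\abs{\varphi^{j}(w)}_H \le 2\abs{j}+\abs{w}_H$ to each chunk described in Proposition~\ref{conjugator-like problem in F}. For $M_1$ the paper telescopes first, writing $M_1 = (\pi s^{-p})^{q}\, s^{qp}$ in $H$, which gives $\abs{M_1}_H \le q(\abs{\pi}_H+p)+qp$ in one stroke; your term-by-term estimate $\sum_{\ell}(2p\ell+\abs{\pi}_H)$ reaches the same quadratic bound with slightly worse constants.
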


\begin{proof} The $K \Sigma$ upper  bounds all follow from applying Proposition~\ref{prop:fbc subword length} to the descriptions of the words and the associated bounds given in Proposition~\ref{conjugator-like problem in F}, noting that $\abs{\varphi^{j}(w)}_H \leq 2|j| + \abs{w}_H$ and that the $i$ in the proposition is at most $m$.

 For the bound on the length of $M_1$, observe that 
	\begin{align*}
	M_1 
	& \ = \  \pi \varphi^p(\pi) \cdots \varphi^{p(q-1)}(\pi) \\
	& \ = \ \pi (s^{-p} \pi s^p)  (s^{-2p} \pi s^{2p}) \cdots (s^{-p(q-1)} \pi s^{p(q-1)}) \\
	& \ = \   (\pi s^{-p})^{q} s^{qp}. 
	\end{align*}  
	Combining \eqref{eq:bound on qp} with $\abs{\pi}_H \leq K \Sigma$ and adjusting $K$ suitably gives $\abs{M_1}_H \leq K \Sigma^2.$  
\end{proof}

Recall that the  $\H$-twisted conjugacy problem asks: given reduced words $\tilde{u}$, $\tilde{v}$ on $a_1^{\pm 1}, \ldots , a_m^{\pm 1}$  and an integer $p >0$, do there exist $0 \leq r < p$ and   words $x , u_0, v_0, u_1, v_1 \in F$ such that $\tilde{u} = u_0u_1$ and $\varphi^{-r} (\tilde{v}) = v_0v_1$, as words, and 
		\begin{equation*}   
	    \varphi^{-p}(u_0  x  v_0^{-1})  \ = \  u_1^{-1} x v_1  \  \text{  in }  \   F? 
	  	\end{equation*}

\begin{lemma} \label{inductive alg}
For all $i=1, \ldots, m$, there exists an algorithm  that, with input any $(p, \tilde{u}, \tilde{v})$ for which the $\H$-conjugacy problem   has a solution $(r, x , u_0,v_0,u_1,v_1)$  with $\rank(x) =i$,  will exhibit \emph{some} solution $(r, X, u_0,v_0,u_1,v_1)$;
the running time of this algorithm is bounded above
by a polynomial in $p + \abs{\tilde{u}}_H + \abs{\tilde{v}}_H$ (where
the implied constants depend only on the rank $m$ of $F$).    
\end{lemma}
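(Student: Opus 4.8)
The plan is to induct on $i$ and build the algorithm as a nested search driven by Proposition~\ref{conjugator-like problem in F}, which tells us that whenever a solution $(r,x,u_0,v_0,u_1,v_1)$ with $\rank(x)=i$ exists there is one in which $x$ is replaced by a word $X$ of one of the explicit ``chunky'' shapes, while Lemma~\ref{lem:chunk lenght bound} caps the $H$-lengths of all the chunks by $K\Sigma$ (and $\abs{M_1}_H\le K\Sigma^2$), with $\Sigma$ as in that lemma. It is convenient to prove a slightly more flexible statement: for each $i$ there is an algorithm $\mathcal A_i$ that, on input $(p,u_0,u_1,v_0,v_1)$ for which some $x$ of rank $i$ satisfies $\varphi^{-p}(u_0xv_0^{-1})=u_1^{-1}xv_1$ in $F$, outputs a word $X$ with $\varphi^{-p}(u_0Xv_0^{-1})=u_1^{-1}Xv_1$ in $F$, running in time polynomial in $\Sigma$. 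The lemma follows by running, at the top level, over all $0\le r<p$ ($p$ choices), all factorisations $\tilde u=u_0u_1$ and all $\varphi^{-r}(\tilde v)=v_0v_1$ as words, calling $\mathcal A_1,\dots,\mathcal A_i$ on each and reporting the first success: there are polynomially many such tuples, since $\ell(\tilde u)\le K_m\abs{\tilde u}_H^{m}$ and $\ell(\varphi^{-r}(\tilde v))\le K_m(\abs{\tilde v}_H+2p)^{m}$ by Corollary~\ref{cor:rank i distortion} together with Lemma~\ref{lem:piece bound}, and for the same reason, using also Proposition~\ref{prop:fbc subword length} on the subwords $u_0,u_1$ of $\tilde u$ and $v_0,v_1$ of $\varphi^{-r}(\tilde v)$, the quantity $\Sigma$ is itself polynomially bounded in $p+\abs{\tilde u}_H+\abs{\tilde v}_H$.

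In the base case $i=1$, case \eqref{form of x 0} of Proposition~\ref{conjugator-like problem in F} lets us take $X=U_1U_2U_3V$ with each $U_t$ a subword of $(u_1\varphi^{-p}(u_0))^{\pm1}$ and $V$ a suffix of $v_1\varphi^{-p}(v_0)$. Precomputing $\varphi^{\pm k}(a_j)$ for $0\le k\le p$, $1\le j\le m$ via the recursion of Lemma~\ref{lem:fbc image of a_i} lets us apply $\varphi^{\pm p}$ to any reduced word in polynomial time with polynomial-length output, so there are only polynomially many candidate quadruples; $\mathcal A_1$ forms each candidate $X$, checks $\varphi^{-p}(u_0Xv_0^{-1})=u_1^{-1}Xv_1$ by free reduction in $F$, and returns the first that works. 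One does, by Proposition~\ref{conjugator-like problem in F}.

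For $i>1$, $\mathcal A_i$ first hunts for $X$ of the forms \eqref{form of x 1} and \eqref{form of x 2}: it enumerates $L$ (a subword of $\varphi^{-p}(u_0)$), $R$ (a subword of $\varphi^{-p}(v_0^{-1})$), the three alternatives each for $S$ and $P$, and $M\in\{M_1M_2,\,M_2^{-1}M_1^{-1}\}$, where $M_2$ is determined (up to polynomially many choices) by its $\varphi^p$-image being a subword of $\varphi^{-p}(u_0)$ or $\varphi^{-p}(v_0^{-1})$, and $M_1=\pi\varphi^p(\pi)\cdots\varphi^{p(q-1)}(\pi)$ with $\pi$ determined up to polynomially many choices by the descriptions in Proposition~\ref{conjugator-like problem in F} and $q\le C\Sigma/p$. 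The bounds on $\abs{k}$, $\abs{k'}$, $qp$ and $\abs{\pi}_H$ there and in Lemma~\ref{lem:chunk lenght bound}, combined with the length-versus-$\abs{\cdot}_H$ comparison from Section~\ref{distortion section}, make each enumeration polynomially sized, and every candidate $X$ is checked as in the base case. Failing that, the remaining possibility is case \eqref{form of x 3}, $X=x=L\hat xR$ with $\rank(\hat x)=j<i$ and $\varphi^{-p}(\hat u_0\hat x\hat v_0^{-1})=\hat u_1^{-1}\hat x\hat v_1$, where $\hat u_0\hat u_1$ is a reduced subword of $\varphi^{-p}(u_0)$ and $\hat v_0\hat v_1$ one of $\varphi^{-p}(v_0)$: $\mathcal A_i$ enumerates $L$, $R$ and these factorisations (polynomially many), and for each $j<i$ runs $\mathcal A_j(p,\hat u_0,\hat u_1,\hat v_0,\hat v_1)$; if that returns $\hat X$ it tests whether $X:=L\hat XR$ solves the original equation. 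Since $\hat X\in F$, the hypothesis $s^p\,\hat u_0\hat X\hat v_0^{-1}\,s^{-p}=\hat u_1^{-1}\hat X\hat v_1$ in $H$ of Lemma~\ref{lem:annoying iteration} is just $\varphi^{-p}(\hat u_0\hat X\hat v_0^{-1})=\hat u_1^{-1}\hat X\hat v_1$ in $F$, so that lemma guarantees $L\hat XR$ solves the original equation whenever $\hat X$ solves the sub-instance; and by induction $\mathcal A_j$ succeeds for the correct $j$, so $\mathcal A_i$ succeeds.

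Finally, termination and complexity: the recursion strictly lowers the rank, so it halts after at most $m$ levels; at each level only polynomially many sub-instances are spawned, and by Proposition~\ref{prop:fbc subword length} the parameter $\Sigma$ of a child is at most $2m+1$ times that of its parent plus $O(p)$, hence stays polynomial throughout, so the whole procedure runs in time polynomial in $p+\abs{\tilde u}_H+\abs{\tilde v}_H$ with degree and constants depending only on $m$. The hardest part, I expect, will be the bookkeeping that keeps every enumeration polynomial --- systematically translating between the word-length bounds needed to bound the search spaces and to perform free reduction in $F$, and the $\abs{\cdot}_H$-bounds in which Proposition~\ref{conjugator-like problem in F} and Lemma~\ref{lem:chunk lenght bound} are phrased, via the distortion estimates of Section~\ref{distortion section} --- together with the routine but essential point, supplied by Lemma~\ref{lem:annoying iteration}, that an arbitrary sub-solution $\hat X$ may be substituted for $\hat x$ in case \eqref{form of x 3}.
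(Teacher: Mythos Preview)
Your proposal is correct and follows essentially the same approach as the paper: induct on $i$, enumerate the polynomially many candidate $X$'s of shapes \eqref{form of x 0}--\eqref{form of x 2} from Proposition~\ref{conjugator-like problem in F}, check each by free reduction, and in case \eqref{form of x 3} recurse on the sub-instance $(p,\hat u_0,\hat u_1,\hat v_0,\hat v_1)$, using Lemma~\ref{lem:annoying iteration} to certify that any returned $\hat X$ yields a valid $X=L\hat X R$. Your minor reformulation---packaging the inductive hypothesis as an algorithm $\mathcal A_i$ that takes $(p,u_0,u_1,v_0,v_1)$ directly rather than $(p,\tilde u,\tilde v)$---is harmless and arguably cleaner, and your explicit bookkeeping on how $\Sigma$ grows through the recursion is more detailed than the paper's treatment.
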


\begin{proof}
 In the following, when we refer to \emph{polynomial bounds}, we 
 will always mean upper bounds that are
 polynomial in $p + \abs{\tilde{u}}_H + \abs{\tilde{v}}_H$.   We induct on $i$. 

Proposition~\ref{conjugator-like problem in F} tells us that in the case $i=1$, there is solution $(r, X , u_0,v_0,u_1,v_1)$  in which $X$ takes the form \eqref{form of x 0}.  We can find one such solution in polynomial time as follows.  We list all the   $(r, u_0,  u_1, v_0,  v_1)$   such that $0 \leq r < p$ and  $\tilde{u} = u_0u_1$ and $\varphi^{-r} (\tilde{v}) = v_0v_1$ as words---there are polynomially many and the words involved all have polynomially bounded length.  For each we list all $U_1$, $U_2$, $U_3$ and $V$ as per \eqref{form of x 0}---again, polynomially many possibilities---and we  check whether of not $\varphi^{-p}(u_0  X  v_0^{-1})    =    u_1^{-1} X v_1$ in $F$ for $X= U_1U_2U_3V$.   

Now suppose that there exists a solution in which $\rank(x)  = i >1$ and that the lemma holds when there exists a solutions in which $x$ has lower rank.     

Again list  the  polynomially many $(r, u_0,  u_1, v_0,  v_1)$  such that $0 \leq r < p$ and  $\tilde{u} = u_0u_1$ and $\varphi^{-r} (\tilde{v}) = v_0v_1$ as words.  For each,   list the polynomially many  words $X$ of the form  \eqref{form of x 1}   or \eqref{form of x 2} of Proposition~\ref{conjugator-like problem in F} and  check whether  $\varphi^{-p}(u_0  X  v_0^{-1})    =    u_1^{-1} X v_1$ in  $F$.  If this fails to turn up a solution $(r, X , u_0,v_0,u_1,v_1)$, then the proposition tells us that  there must be one in which $X$ has the form \eqref{form of x 3}.  Accordingly,  for each of the $(r, u_0,  u_1, v_0,  v_1)$, list  all the polynomially many $(L,R,\hat{u}_0, \hat{u}_1, \hat{v}_0, \hat{v}_1)$  satisfying the conditions of Proposition~\ref{conjugator-like problem in F}.   (These $L, R, \hat{u}_0, \hat{u}_1, \hat{v}_0, \hat{v}_1$  have polynomially bounded length.)  

We are considering polynomially many  $(r, u_0,  u_1, v_0,  v_1)$, and for each one there are polynomially many  $(L,R,\hat{u}_0, \hat{u}_1, \hat{v}_0, \hat{v}_1)$, so this amounts to polynomially many possibilities in total.  For one of them, there is an $\hat{x}$ with $\rank(\hat{x}) <i$ such that $$\varphi^{-p}(\hat{u}_0  \hat{x}  \hat{v}_0^{-1})   =  \hat{u}_1^{-1}  \hat{x} \hat{v}_1 \  \text{  in }  \   F$$  and 
\begin{equation*}    
	    \varphi^{-p}(u_0  L \hat{x} R  v_0^{-1})  \ = \  u_1^{-1} L \hat{x} R v_1  \  \text{  in }  \   F. 
	  	\end{equation*}  
By induction we have  a polynomial time algorithm which we can run (in polynomial time overall) on every one of these possibilities, and for one of them it will exhibit some $\hat{X}$ such that $$\varphi^{-p}(\hat{u}_0  \hat{X}  \hat{v}_0^{-1})   =  \hat{u}_1^{-1}  \hat{X} \hat{v}_1 \  \text{  in }  \   F.$$  
So Lemma~\ref{lem:annoying iteration} gives us that $\varphi^{-p}(u_0  L \hat{X} R  v_0^{-1})    =    u_1^{-1} L \hat{X} R v_1$   in $ F$,  and thereby we get a solution $(r, X , u_0, v_0, u_1, v_1)$ where $X = L \hat{X} R$.  
\end{proof}

\begin{cor}[$\H$-twisted conjugacy complexity] \label{conjugator-like problem in F solution}
There is an algorithm that takes as input an integer $p >0$   and  reduced words $\tilde{u}$ and $\tilde{v}$ on $a_1^{\pm 1} , \ldots , a_m^{\pm 1}$ and determines whether or not there exists a solution $(r, x , u_0,v_0,u_1,v_1)$ to the $\H$-twisted conjugacy problem.  If a solution exists, it exhibits one. 
The running time of the algorithm is bounded above be a
 polynomial function of  $p + \abs{\tilde{u}}_H + \abs{\tilde{v}}_H$.  
\end{cor}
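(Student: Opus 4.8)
The plan is to derive the corollary from Lemma~\ref{inductive alg} by running, in sequence, one sub-procedure for each possible value of $\rank(x)$. Since $x$ is a reduced word on $a_1^{\pm1},\ldots,a_m^{\pm1}$, there are $m+1$ cases: $\rank(x)=0$ (equivalently, $x$ the empty word) and $\rank(x)=1,\ldots,m$. For $i=1,\ldots,m$, Lemma~\ref{inductive alg} furnishes an algorithm $\mathcal{A}_i$ that, on any input $(p,\tilde u,\tilde v)$ admitting a solution with $\rank(x)=i$, outputs \emph{some} solution in time polynomial in $p+\abs{\tilde u}_H+\abs{\tilde v}_H$. The case $x$ empty is exactly the I-twisted conjugacy problem of Section~\ref{reductions}: the $\H$-twisted relation then reads $\varphi^{-p}(u_0v_0^{-1})=u_1^{-1}v_1$ in $F$, and this is decided by the exhaustive search $\mathcal{A}_0$ explained there, over the tuples $(r,u_0,u_1,v_0,v_1)$ with $0\le r<p$, $u_0u_1=\tilde u$ and $v_0v_1=\varphi^{-r}(\tilde v)$ as words, checking each equation via the word problem in $F$. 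Before assembling these, I would record the bookkeeping that keeps everything polynomial: by Corollary~\ref{cor:rank i distortion} together with Lemma~\ref{lem:piece bound}, $\ell(\tilde u)=\abs{\tilde u}_F$ and $\ell(\tilde v)=\abs{\tilde v}_F$ are bounded by a polynomial in $\abs{\tilde u}_H+\abs{\tilde v}_H$; and Proposition~\ref{prop:lower bound distortion} (together with Lemma~\ref{lem:piece bound}, bounding the number of pieces of $\tilde v$) bounds $\ell(\varphi^{-r}(\tilde v))$, and hence every intermediate word in its computation, by a polynomial in $p+\abs{\tilde v}_H$ for $0\le r<p$. Thus $\mathcal{A}_0$ examines polynomially many tuples, all of polynomially bounded length, and since the word problem in the free group $F$ is solved in linear time by free reduction, $\mathcal{A}_0$ runs in polynomial time; the same length estimates apply to every candidate tuple produced inside each $\mathcal{A}_i$.

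The algorithm for the corollary runs $\mathcal{A}_0,\mathcal{A}_1,\ldots,\mathcal{A}_m$ one after another; it halts and returns the first tuple emitted by any of them, and it reports that no solution exists if all $m+1$ procedures terminate without output. Correctness rests on observing that each $\mathcal{A}_i$ is a \emph{guarded search}: as is evident from the proof of Lemma~\ref{inductive alg} and from the I-twisted discussion in Section~\ref{reductions}, each $\mathcal{A}_i$ enumerates a polynomially bounded list of explicit candidate tuples $(r,X,u_0,v_0,u_1,v_1)$ and emits one only after certifying---by the word problem in $F$, or, in the recursive instances of case \eqref{form of x 3} of Proposition~\ref{conjugator-like problem in F}, by Lemma~\ref{lem:annoying iteration} applied to an output of a strictly lower-rank call---that it is a genuine solution. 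Consequently each $\mathcal{A}_i$ terminates within the stated polynomial bound and never returns an invalid tuple, even when run on an input that has no solution of rank $i$. It follows that any tuple the overall algorithm returns is a correct solution; and conversely, if the $\H$-twisted conjugacy problem does admit a solution $(r,x,u_0,v_0,u_1,v_1)$, then with $i:=\rank(x)\in\{0,1,\ldots,m\}$ the procedure $\mathcal{A}_i$ is guaranteed---by Lemma~\ref{inductive alg} when $i\ge 1$, and by the discussion in Section~\ref{reductions} when $i=0$---to output a solution, so the algorithm never wrongly reports that none exists. Finally, running $m+1$ polynomial-time procedures in sequence is itself a polynomial-time computation, with implied constants depending only on $m$, which is the asserted bound.

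The one point that requires real care is the graceful-failure behaviour used above: one must be certain that invoking $\mathcal{A}_i$ on an input possessing no rank-$i$ solution neither loops forever nor produces a false positive. This is precisely why it matters that the algorithm furnished by Lemma~\ref{inductive alg} is assembled from finite enumerations, each output of which is validated (against the polynomial-time word problem in $F$, or via Lemma~\ref{lem:annoying iteration}) before being emitted---a property already present in that lemma's proof, whose recursion is on $\rank(x)$ and hence strictly decreases, bottoming out at $\rank(x)=1$. Granting this, the assembly above is routine, and I expect it to be essentially the only obstacle; the length estimates of the first paragraph are mechanical consequences of the distortion results established earlier.
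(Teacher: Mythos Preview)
Your approach is essentially the paper's own: run the algorithms of Lemma~\ref{inductive alg} for $i=1,\dots,m$ in turn and declare ``no solution'' if none outputs anything. The paper's proof is only three sentences and does not add your $\mathcal{A}_0$ for the empty-$x$ case, nor does it spell out the ``graceful failure'' argument that each $\mathcal{A}_i$ is a guarded search (polynomially many explicit candidates, each verified in $F$ before emission) and hence terminates without false positives even on inputs with no rank-$i$ solution. Your treatment of these two points is a genuine improvement in rigor: Lemma~\ref{inductive alg} as stated only promises output when a rank-$i$ solution exists, and Proposition~\ref{conjugator-like problem in F} explicitly assumes $x$ is non-empty, so the paper is tacitly relying on exactly the implementation details you make explicit. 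Your bookkeeping via Corollary~\ref{cor:rank i distortion}, Lemma~\ref{lem:piece bound}, and Proposition~\ref{prop:lower bound distortion} is also sound and more than the paper supplies here.
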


\begin{proof}  
Run the algorithms of Lemma~\ref{inductive alg} for $i=1, \ldots, m$ on input $(p, \tilde{u}, \tilde{v})$.  The time that it takes each to  halt is  bounded above 
by a polynomial in $p + \abs{\tilde{u}}_H + \abs{\tilde{v}}_H$.    If there exists a solution, one of them  will exhibit it. 
\end{proof}

Lemma~\ref{lem:chunk lenght bound} gives us many of the ingredients for  the desired linear upper bound on the conjugator length of $H$, but we will need a way around the quadratic bound on $\abs{M_1}_H$.  Accordingly, we will manipulate the form of the conjugator in the case \eqref{form of x 2} of Proposition~\ref{conjugator-like problem in F}, which is where  $M_1$ appears.

\begin{lemma}\label{lem:swap for a linear conjugator}
	Suppose $u = \tilde{u} s^p$ and $v = \tilde{v}s^p$ are conjugate elements of $H$,
	and there is a solution  $(r, x , u_0,v_0,u_1,v_1)$ to the $\H$-twisted conjugacy problem for  $(p, \tilde{u}, \tilde{v})$  in which $x$ has form \eqref{form of x 2}. Let $q$ be as in Proposition~\ref{conjugator-like problem in F} (in the form of $M_1$).
	Then either $u_0 LS s^{pq} M_2 P R v_0^{-1} s^r$   or $u_0 LSM_2^{-1} s^{-pq} PRv_0^{-1} s^r$    conjugates  $u$ to $v$. 
\end{lemma}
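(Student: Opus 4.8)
The plan is to start from the equation $\varphi^{-p}(u_0 x v_0^{-1}) = u_1^{-1} x v_1$ in $F$ that a solution of form $\mathcal X2$ provides, and to show that replacing the chunk $M_1 = \pi\,\varphi^p(\pi)\cdots\varphi^{p(q-1)}(\pi)$ appearing in $x = L\,S\,M_1M_2\,P\,R$ by the single letter-power $s^{pq}$ still yields a conjugator in $H$. The key identity is the one already recorded in the proof of Lemma~\ref{lem:chunk lenght bound}, namely
\begin{equation*}
M_1 \ = \ \pi\,\varphi^p(\pi)\cdots\varphi^{p(q-1)}(\pi) \ = \ (\pi s^{-p})^q\, s^{pq} \qquad \text{in } H,
\end{equation*}
or equivalently $M_1 s^{-pq} = (\pi s^{-p})^q$; what we really want is the relation $M_1 = s^{pq}\varphi^{pq}(\text{stuff})$-type rearrangements together with the fact that $s^{-p}$ commutes past $\varphi^{\pm p}$-images in the expected way. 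First I would recall from Case~\ref{case2}c of the proof of Proposition~\ref{conjugator-like problem in F} exactly where $M_1$ sits: its image $\varphi^{-p}(\pi)$ is a concatenation of a subword of $(u_1\varphi^{-p}(u_0))^{-1}$ with $S$ (the ``left'' alternative), or of $P$ with a subword of $(v_1\varphi^{-p}(v_0))^{-1}$ (the ``right'' alternative), and $M_2 = \varphi^{pq}(\text{a subword of a }\pi_j\text{-run})$ with $\varphi^p(M_2)$ a subword of $\varphi^{-p}(v_0^{-1})$.

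The main step is an explicit computation in $H$ (not in $F$), working with the normal form and the defining relations $s^{-1}a_i s = \varphi(a_i)$, equivalently $s^{-p} w s^p = \varphi^p(w)$ for $w\in F$. I would substitute $X' := L\,S\,s^{pq}\,M_2\,P\,R$ for $x$ in the left-hand side $\varphi^{-p}(u_0 X' v_0^{-1})$, which in $H$ is $s^{p} u_0 X' v_0^{-1} s^{-p}$, and push the $s^{\pm p}$'s through. Because $M_1 = (\pi s^{-p})^q s^{pq}$ and $\pi$ is built (after applying $\varphi^{-p}$) from subwords of the boundary data that are being ``absorbed'' exactly by the twisted-conjugacy relation, the telescoping cancellation that made $x=L S M_1 M_2 P R$ a conjugator in $F$ survives verbatim once we replace $M_1$ by $s^{pq}$ and accept that the resulting word represents an element of $H\setminus F$ in general. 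Concretely: in the equation $s^p u_0 L S M_1 M_2 P R v_0^{-1} s^{-p} = u_1^{-1} L S M_1 M_2 P R v_1$, I would write $M_1 = (\pi s^{-p})^q s^{pq}$ on both sides, move the trailing $s^{pq}$ to the right across $M_2 P R v_0^{-1} s^{-p}$ on the left (picking up $\varphi$-twists that are harmless because $\varphi^p(M_2)$, $\varphi^p(P_j)$, $\varphi^p(R)$ are the subwords Proposition~\ref{conjugator-like problem in F} controls), and check the two sides agree after cancelling the $(\pi s^{-p})^q$ prefixes against each other — this is where the left/right dichotomy of $\pi$ comes in, and it splits the proof into the two displayed conjugators: $u_0 LS s^{pq} M_2 P R v_0^{-1} s^r$ when $\pi$ is anchored on the left, and $u_0 LS M_2^{-1} s^{-pq} P R v_0^{-1} s^r$ (coming from the mirror form $M = M_2^{-1}M_1^{-1}$, i.e.\ $e<0$) when $\pi$ is anchored on the right. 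Finally, I would invoke the translation from Section~\ref{reductions} (as in Lemma~\ref{lem:annoying iteration}'s last sentence): $\varphi^{-p}(u_0 \,(\text{new }X)\, v_0^{-1}) = u_1^{-1}\,(\text{new }X)\,v_1$ holding in $H$ is exactly $uw=wv$ in $H$ with $w = (\text{new }X) s^r$, $\tilde u = u_0u_1$, $\varphi^{-r}(\tilde v)=v_0v_1$, so the displayed word conjugates $u$ to $v$.

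The hard part will be bookkeeping the $\varphi$-twists accumulated when sliding the various powers of $s$ past $M_2$, $P$, $R$, and $v_0^{-1}$: one must verify that every $s^{\pm p}$ that gets ``shuffled'' lands in a position where the word it twists is precisely one of the subwords that Proposition~\ref{conjugator-like problem in F} (via Case~\ref{case2}c and properties $\mathcal S\mathcal{7}$--$\mathcal S\mathcal{10}$, i.e.\ \eqref{shift outside2}, \eqref{shift outside3}, \eqref{shift}) pins down, so that no uncontrolled cancellation or growth occurs and the net effect is exactly the substitution $M_1 \leftrightarrow s^{pq}$ (resp.\ $M_1^{-1}\leftrightarrow s^{-pq}$). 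Once that is checked, the identity $M_1 = (\pi s^{-p})^q s^{pq}$ does the rest essentially formally, and the two cases of the statement correspond exactly to the two sign possibilities for the shift $e$ (equivalently, to $M = M_1M_2$ versus $M = M_2^{-1}M_1^{-1}$) already isolated in the proof of Proposition~\ref{conjugator-like problem in F}.
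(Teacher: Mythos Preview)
Your starting point --- the telescoping identity $M_1=(\pi s^{-p})^q s^{pq}$ in $H$ --- is the right one, and it does lead to a proof that is shorter than the paper's direct computation of $uw$ and $wv$ via repeated use of the shift relation \eqref{shift}. But the concrete mechanism you describe does not work. Sliding $s^{pq}$ rightwards across $M_2\,P\,R\,v_0^{-1}$ produces $\varphi^{-pq}$-twists of those words, and Proposition~\ref{conjugator-like problem in F} says nothing about $\varphi^{-pq}(M_2)$, $\varphi^{-pq}(P)$, $\varphi^{-pq}(R)$; it only controls $\varphi^{p}(M_2)$ and the like, so the twists are \emph{not} harmless. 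More seriously, even after that rearrangement the two copies of $(\pi s^{-p})^q$ sit to the right of \emph{different} prefixes ($s^p u_0 LS$ on one side, $u_1^{-1}LS$ on the other), so there is no ``cancelling against each other''.

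The missing idea is a one-line consequence of \eqref{what cancellation}. Since $u_0LS=u_0\,\pi_1\cdots\pi_a$ and $\pi=\pi_{a+1}\cdots\pi_{a+e}$ in the $e>0$ case, the relation $\varphi^{-p}(u_0\,\pi_1\cdots\pi_{a+e})=u_1^{-1}\pi_1\cdots\pi_a$ reads
\[
s^p\,(u_0LS)\,(\pi s^{-p}) \;=\; u_1^{-1}LS \quad\text{in }H,
\]
equivalently $(u_0LS)(\pi s^{-p})(u_0LS)^{-1}=u^{-1}$. Hence if $w_0=u_0\,x\,v_0^{-1}s^r$ is the known conjugator and $w=u_0LS\,s^{pq}M_2PR\,v_0^{-1}s^r$ is the proposed one, then
\[
w_0 \;=\; (u_0LS)(\pi s^{-p})^q(u_0LS)^{-1}\cdot w \;=\; u^{-q}\,w,
\]
so $w=u^{q}w_0$ also conjugates $u$ to $v$. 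The $e<0$ case is the mirror argument on the $v$-side, using $Z\,\varphi^{-p}(v_0^{-1})=R\,v_1$ in place of \eqref{what cancellation}, and yields $w=w_0\,v^{q}$ for the second displayed conjugator. This bypasses any need to track $\varphi$-twists on $M_2,P,R$.

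For comparison, the paper instead writes $w$ explicitly in terms of the piece decomposition of $x$ as $u_0\,\pi_1\cdots\pi_a\,s^{pq}\,\pi_{a+1+qe}\cdots\pi_k\,v_0^{-1}s^r$, then computes $uw$ and $wv$ separately by repeatedly applying \eqref{shift} and the two boundary relations, arriving at the common expression $u_0\,\pi_1\cdots\pi_a\,s^{p(q+1)}\,\pi_{a+(q+1)e+1}\cdots\pi_k\,v_0^{-1}s^r$. Your route, once repaired as above, is more conceptual: it identifies the new conjugator as a power of $u$ times the old one.
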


\begin{proof}
	%We unfortunately need to get our hands dirty in Case \ref{case2}c again. 
	We are in the setting of Case \ref{case2}c of our proof of Proposition~\ref{conjugator-like problem in F}.
	Assume that $e>0$.  Let $w = u_0 LS s^{pq} M_2 P R v_0^{-1} s^r$. We will show that $uw=wv$ in $H$.

	We have that  $u= u_0 u_1 s^p$ and $v = \varphi^r(v_0v_1)s^p$ in $H$.  
	Also $M_2 = \varphi^{pq}(\pi_{a+1} \cdots \pi_{b-1-qe})$, which equals $\pi_{a+1+qe} \cdots \pi_{b-1}$ 
	by \eqref{shift}.
	Plugging this and the other ingredients into $w$, we get
	$$w  \ = \  u_0 \ \pi_1 \cdots \pi_a \ s^{pq} \ \pi_{a+1+qe} \cdots \pi_k \ v_0^{-1} \ s^r.$$
	Then, by repeatedly applying \eqref{shift} and the identity $s^{-1} g s = \varphi(g)$ for $g\in F$, we get
	\begin{align*}
	uw 
	& \ = \  u_0 \ u_1 \ s^p \ u_0 \ \pi_1 \cdots \pi_a \ s^{pq} \ \pi_{a+1+qe} \cdots \pi_k \ v_0^{-1}\ s^r \\
	& \ = \  u_0 \ u_1 \ \varphi^{-p}(u_0 \ \pi_1 \cdots \pi_a) \ s^{p(q+1)} \ \pi_{a+1+qe} \cdots \pi_k \ v_0^{-1}\ s^r \\
	& \ = \  u_0 \ u_1 \ \varphi^{-p}(u_0 \ \pi_1 \cdots \pi_a) \  \varphi^{-p(q+1)}(\pi_{a+1+qe} \cdots \pi_{b-1})  \  s^{p(q+1)} \ \pi_{b} \cdots \pi_k \ v_0^{-1}\ s^r \\
	& \ = \  u_0 \ u_1 \ \varphi^{-p}(u_0 \ \pi_1 \cdots \pi_a) \  \varphi^{-p}(\pi_{a+1} \cdots \pi_{b-1-qe})  \  s^{p(q+1)} \ \pi_{b} \cdots \pi_k \ v_0^{-1}\ s^r \\
	& \ = \  u_0 \ u_1 \ \varphi^{-p}(u_0 \ \pi_1 \cdots  \pi_{b-1-qe})  \  s^{p(q+1)} \ \pi_{b} \cdots \pi_k \ v_0^{-1}\ s^r \\
	& \ =  \  u_0 \ u_1 \ \varphi^{-p}(u_0 \ \pi_1 \cdots  \pi_{b-1-qe})  \  \varphi^{-p(q+1)}(\pi_b \cdots \pi_{a+(q+1)e}) \  s^{p(q+1)} \ \pi_{a+(q+1)e+1} \cdots \pi_k \ v_0^{-1}\ s^r \\
	& \ = \  u_0 \ u_1 \ \varphi^{-p}(u_0 \ \pi_1 \cdots  \pi_{b-1-qe} \ \pi_{b-qe} \cdots \pi_{a+e}) \  s^{p(q+1)} \ \pi_{a+(q+1)e+1} \cdots \pi_k \ v_0^{-1}\ s^r.
	\end{align*}
	By \eqref{what cancellation}, $\varphi^{-p}(u_0 \ \pi_1 \cdots \pi_{a+e}) = u_1^{-1} \pi_1 \cdots \pi_a$.
	Hence
	\begin{equation} \label{eq:uw}
	uw
	= u_0 \ \pi_1 \cdots \pi_a \  s^{p(q+1)} \ \pi_{a+(q+1)e+1} \cdots \pi_k \ v_0^{-1}\ s^r.
	\end{equation}
	Similar calculations give:
	\begin{align*}
	wv
	& \ = \  u_0 \ \pi_1 \cdots \pi_a \ s^{pq} \ \pi_{a+1+qe} \cdots \pi_k \ v_0^{-1} \ s^r \ \varphi^r(v_0v_1) \ s^p\\
	& \ = \  u_0 \ \pi_1 \cdots \pi_a \ s^{pq} \ \pi_{a+1+qe} \cdots \pi_k \ v_0^{-1} \ v_0 \ v_1 \ s^{p+r}\\
	& \ = \  u_0 \ \pi_1 \cdots \pi_a \ s^{p(q+1)} \ \varphi^{p}(\pi_{a+1+qe} \cdots \pi_{b-1} \ \pi_{b} \cdots  \pi_k \  v_1) \ s^{r}.
	\end{align*}
	The corresponding fact to \eqref{what cancellation} concerning $R$ and $Z$ is that $Z\varphi^{-p}(v_0^{-1})$ freely reduces to $R v_1$.
	It implies $\varphi^{-p} ( \pi_{b+e} \cdots \pi_k \ v_0^{-1} ) = \pi_b \cdots \pi_k v_1$.
	Together with one final application of \eqref{shift} to $\varphi^p(\pi_{a+1+qe} \cdots \pi_{b-1})$, this gives
	$$
	wv  \ = \  u_0 \ \pi_1 \cdots \pi_a \ s^{p(q+1)} \ \pi_{a+1+(q+1)e} \cdots \pi_{b+e-1} \  \pi_{b+e} \cdots \pi_k \ v_0^{-1}  \ s^{r},
	$$
	which equals $uw$ by equation~\eqref{eq:uw}.	
	
The proof when $e<0$ is similar, giving $uw=wv$ in $H$ for $w = u_0 LSM_2^{-1} s^{-pq} PRv_0^{-1} s^r$.
\end{proof}

\section{Completing our proof of Theorem~\ref{CL of free-by-cyclic}} \label{sec:CL of H}

	We will establish a linear  upper bound on the conjugator length of $H$.
	Suppose $u$, $v$ and $w$  are words on  $a_1^{\pm 1}, \ldots, a_m^{\pm 1}, s^{\pm 1}$  such that $uw=wv$.   We will show that there is a word $W$ on  $a_1^{\pm 1}, \ldots, a_m^{\pm 1}, s^{\pm 1}$ such that $uW=Wv$ and $\ell(W)$ at most a constant times  $\abs{u}_H + \abs{v}_H$.
	
	Write the normal forms  of $u$, $v$ and $w$ as  $\tilde{u}s^p$, $\tilde{v} s^p$ and $\tilde{w}s^r$, respectively.
		
	Following the discussion of Section~\ref{reductions}, if $p=0$ then we are in the 0-twisted conjugacy case, and we find   $W$ via Proposition~\ref{0 twisted conj problem} \eqref{case:0-twisted CL}.
	
	When $p\ne 0$, as we can replace $u$ and $v$  by their inverses if necessary, we may assume $p>0$.
	As discussed in Section~\ref{reductions} we may also replace $w$ with $u^jw$ so we can assume $0\leq r < p$.
	By Proposition~\ref{prop:fbc subword length},  $\abs{u'}_H \leq (2m+1)\abs{u}_H$  and  $\abs{v'}_H \leq (2m+1)\abs{\varphi^{-r}(\tilde{v})}_H \leq (2m+1)(2r+\abs{v}_H)$ for any subwords $u'$ of $\tilde{u}$ and $v'$ of $\varphi^{-r}(\tilde{v})$.      
	We also have $0\leq r<p \leq  \abs{u}_H$. 
	So it will suffice to bound the length of $W$ in terms of $p$ and of lengths in $H$ of subwords of $\tilde{u}$ and $\varphi^{-r}(\tilde{v})$.	
		
	Our $u$, $v$ and $w$ form either  the I- or $\H$-configuration of Figure~\ref{fig:fbc determine X}.
	
	In the case of the I-configuration, $w = u_0v_0^{-1}s^r$ where  $u_0$ and $v_0$ are prefixes of $\tilde{u}$  and $\varphi^{-r}(\tilde{v})$.  	Then  $\abs{w}_H \leq   \abs{u_0}_H + \abs{v_0}_H +r$, and so $W =w$ will be a conjugator which, by the discussion above, satisfies the required length bound.
	
	Now consider the case of the $\H$-configuration.	  Proposition~\ref{conjugator-like problem in F} tells us that we have a conjugator $\tilde{w} s^r$, where $\tilde{w} = u_0 X v_0^{-1}$, with $u_0$ and $v_0$ prefixes of $\tilde{u}$ and $\varphi^{-v}(\tilde{v})$ respectively, and the form of $X$ following one of \eqref{form of x 0}--\eqref{form of x 3}.
	It suffices for us to show  $\ell(X)$ is at most a constant times  $\abs{u}_H + \abs{v}_H$.
 Lemma~\ref{lem:chunk lenght bound} would give this bound  but for $M_1$ in case  \eqref{form of x 2} and $\hat{x}$ in  case  \eqref{form of x 3}.
	
	As remedy, in the event of \eqref{form of x 2},  we use the conjugator from Lemma~\ref{lem:swap for a linear conjugator}.
	As $L$, $S$, $M_2$, $P$, and $R$ are bounded as required, and $qp$ is bounded by \eqref{eq:bound on qp}, the required bound on $\abs{w}_H$ follows.
	
	 In the event of case \eqref{form of x 3},  we iterate this process.
	By Proposition~\ref{conjugator-like problem in F} we know that   $\rank(\hat{x}) < \rank(x)$ and  $\hat{x}$ occurs in a solution to  an $\H$-twisted conjugacy problem, namely 
	\begin{equation} \label{eq:final proof iterate}
	\varphi^{-p}(\hat{u}_0 \hat{x}\hat{v}_0^{-1})  \ = \  \hat{u}_1^{-1} \hat{x} \hat{v}_1
	\end{equation}
	where  $\hat{u}_0$, $\hat{u}_1$, $\hat{v}_0$, and $\hat{v}_1$ are words whose   lengths in $H$ are at most a constant multiple of   $\abs{u}_H + \abs{v}_H$  by Proposition~\ref{prop:fbc subword length}.   
	Lemma~\ref{lem:annoying iteration} shows how an $\hat{X}$ solving this new  $\H$-twisted conjugacy problem leads to an $X$ solving the earlier one, and that if  $\hat{X}$ has length at most a constant times $\abs{u}_H + \abs{v}_H$, then the same will be true of $X$.
	We  reapply Proposition~\ref{conjugator-like problem in F}, and again, if we hit case~\eqref{form of x 0} or \eqref{form of x 1} then we can stop. In case~\eqref{form of x 2}, a short conjugator is found via Lemma~\ref{lem:swap for a linear conjugator}.
	If we hit case~\eqref{form of x 3} then we iterate down to a lower rank again.
	
The maximum number of times we can iterate through case~\eqref{form of x 3}  is $m-1$ times.  This will bring us to rank $1$ (if the process has not yet terminated) and then case \eqref{form of x 0} will apply.   So this process will terminate at an $X$ and so a $W$ of suitably bounded length.

\section{The algorithm: completing our proof of Theorem~\ref{CP of free-by-cyclic}} \label{the alg}

Here, in outline, is our  algorithm for the conjugacy and conjugacy search problems for $H$.
 
\textbf{Input:} Words $u$ and $v$ on $a_1^{\pm 1} , \ldots , a_m^{\pm 1} , s^{\pm 1}$.

\step{1}
Convert $u$ and $v$ to normal forms $\tilde{u}s^p$ and $\tilde{v}s^q$ respectively.
If $p\ne q$, then stop and declare $u$ is not conjugate to $v$.  If $p = q <0$, then replace $u$ and $v$ by their inverses and return to the start.
\nopagebreak

Time required: polynomial in $\ell(u)+\ell(v)$.

\step{2}
If $p=q=0$, then run the algorithm of Proposition~\ref{0 twisted conj problem} \eqref{case:0-twisted compl} solving the 0-twisted conjugacy problem.
If it declares the 0-twisted conjugacy problem has no  solution, then declare $u$ is not conjugate to $v$.
Otherwise it outputs a solution $(r,\tilde{w})$, so stop and declare $\tilde{w}s^r$ is a conjugator.
\nopagebreak

Time required: polynomial in $\ell(\tilde{u}) + \ell(\tilde{v})$.

\step{3} We have $p=q>0$.
Let $\cal{I}$ be the set of all pairs $(\tilde{w},r)$, where $0\le r  < p$, and $\tilde{w}$ is a word of the form $UV$ where $U$ is a prefix of $\tilde{u}$ and $V^{-1}$ is a prefix of $\varphi^{-r}(\tilde{v})$.
For each $(\tilde{w},r)$ in $\cal{I}$, check whether $\tilde{u} \varphi^{-p}(\tilde{w}) = \tilde{w} \varphi^{-r}(\tilde{v})$  (as per the I-twisted conjugacy problem). If a solution $(\tilde{w},r)$ is found, then the algorithms declares that $u$ and $v$ are conjugate and outputs $\tilde{w}s^r$ as a conjugator.
If no solution is found we continue to the next step.
\nopagebreak

Time required: the number of entries
on the list $\mathcal{I}$ is bounded by a polynomial in $p + \ell(\tilde{u}) + \ell(\tilde{v})$ and the obvious solution to the
word problem in $F$ runs in linear time, so overall this step runs in  time polynomial in $\abs{p} + \ell(\tilde{v}) + \ell(\tilde{v})$.
	
\step{4}
Run the algorithm of Corollary~\ref{conjugator-like problem in F solution}  for the $\H$-twisted conjugacy problem.
If it declares there is no solution, stop and declare that $u$ and $v$ are not conjugate.
If it exhibits a solution $(r,x,u_0,v_0,u_1,v_1)$, then declares that $u$ and $v$ are conjugate and output $u_0 x v_0^{-1} s^r$ as a conjugator.
\nopagebreak

Time required: polynomial in $\abs{p} + \abs{\tilde{u}}_H + \abs{\tilde{v}}_H$.

\medskip

\textbf{Overall time required:}
Since $\abs{\tilde{u}}_H \leq \ell(\tilde{u}) \leq C \ell(u)^m$ and $\abs{\tilde{v}}_H \le \ell(\tilde{v}) \leq C \ell(v)^m$ for a suitable constant $C>0$, and $\abs{p} \leq \ell(u)$,  the total running time of the algorithm is polynomial in $\ell(u) + \ell(v)$.

\begin{remark} \label{upgrade to linear remark}  
The algorithm described above can be modified as follows to output in polynomial time a conjugator  $W$ (if one exists) with $\ell(W)$ at most a constant times $\ell(u) + \ell(v)$. We describe the required changes.  First,
when the algorithm  of Lemma~\ref{inductive alg} finds a conjugator of form \eqref{form of x 2} its output  includes the subword $M_1$ or $M_1^{-1}$.  Add  an extra step that  replaces this $M_1$ by $s^{pq}$.  (Lemma~\ref{lem:swap for a linear conjugator} confirms that the result remains a conjugator.)  This will produce  a word  $W_0$ which is a conjugator whose \emph{length in $H$} is bounded by a linear 
function of  $\abs{u}_H + \abs{v}_H\le\ell(u) + \ell(v)$.  This means that   there is a word $W$ on    $a_1^{\pm 1}, \ldots, a_m^{\pm 1}, s^{\pm 1}$ that equals $W_0$ in $H$  and has length $\ell(W)\le \ell(u) + \ell(v)$.  It remains to argue that we can further adapt the algorithm to exhibit such a $W$.   The word $W_0$ is assembled from words derived from   subwords of $\tilde{u}$ and $\tilde{v}$  as described in Section~\ref{sec:CL of H}.     We used  Proposition~\ref{prop:fbc subword length} to  bound the lengths (in $H$) of such subwords in terms of $|u|_H$ or $|v|_H$.  Our proof of   Proposition~\ref{prop:fbc subword length} is constructive. In particular it can be adapted to a polynomial time algorithm that, for example, takes a subword $u'$ of $\tilde{u}$, where $u=\tilde{u}s^p$ in normal form, and gives a word on $a_1^{\pm 1}, \ldots, a_m^{\pm 1}, s^{\pm 1}$  that equals $u'$ in $H$ and whose length is at most a constant times    $|u|_H$.  We can then assemble $W$  from words obtained in  this manner.  
\end{remark}

\section{{An alternative approach}}\label{s:last}

In this section we outline an alternative proof of  
Theorem~\ref{CP of free-by-cyclic} that 
is based on the structure of $H_m$ as an iterated HNN extension. This
alternative approach will be developed in detail in \cite{BrRiSa3} and
applied to a wider class of free-by-cyclic groups.

We regard
$H=H_m$ as an $(m-1)$-fold
iterated HNN extension of $H_1=\<s,a_1\>\cong \Z^2$ where at the 
$j$-th stage the base group is $H_j:=\<s,a_1,\dots, a_{j}\>$, the stable
letter is $a_{j+1}$, the associated (cyclic) subgroups are $\<s\>$ and 
$\<sa_{j}^{-1}\>$, and the relation $a_{j+1}^{-1}s a_{j+1} = sa_{j}^{-1}$ holds.
This point of view enables one to argue by induction on $m$ and
appeal to the technology of corridors
to analyse van Kampen diagrams and their annular
analogues over the natural presentations of these groups.
But in keeping with the viewpoint of this article, we shall  
suppress the use of diagrams here and concentrate on the algebraic translation
of the insights that they provide.

There is a classical approach to the conjugacy problem in HNN extensions
based on Collins' Lemma \cite{CollinsConj}---see \cite{LS}, page 185, for example. 
This simplifies in the case where the associated subgroups are cyclic,
as we shall now explain.

Let $G= (G_0, t \mid t^{-1}\a t=\b)$ be an HNN extension
where $A=\<\a\>$ and $B=\<\b\>$ are 
infinite cyclic.  
We fix a generating set $S$ for $G_0$ that includes 
$\a$ and $\b$. A word $U$ in the alphabet $S^{\pm 1}$ is  
in {\em cyclically reduced HNN form} if $U^2$ does not
contain a pinch---i.e., a subword $t^{-1}ct$ with $c\in A$ or 
$t d t^{-1}$ with $d\in B$. 

For simplicity, we assume that 
distinct powers of $\b$ are not conjugate
in $G$ and that no power of $\a$ is conjugate to a power of $\b$ in $G_0$.
A straightforward analysis of annular diagrams yields the following version
of Collins' Lemma in this simplified setting.

\begin{lemma}[Collins' Lemma]
Assume that $U$ and $V$ are words in 
cyclically reduced HNN form. If $U$ is conjugate to $V$ in $G$, then either
\begin{enumerate}
\item[i]  $U$ and $V$ contain no occurrences of $t^{\pm 1}$ and
either they are conjugate in $G_0$ or else
one is conjugate into $A$ and the other is conjugate into $B$; or else 
\item[ii]   both $U$ and $V$ contain an occurrence of $t^{\pm 1}$ and
there are cyclic permutations $U'$ of $U$ and $V'$ of $V$
and an integer $q$ such that $\a^{-q} U' \a^q = V'$ in $G$.
\end{enumerate}
\end{lemma}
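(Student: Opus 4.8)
The plan is to prove this simplified form of Collins' Lemma by analysing \emph{annular diagrams} over the natural presentation of $G = (G_0, t \mid t^{-1}\a t = \b)$, exactly as the preceding paragraph advertises, using the technology of $t$-corridors (strips of $t$-edges running between the two boundary components). Let $\Delta$ be a reduced annular diagram whose two boundary cycles are labelled by the cyclically reduced HNN words $U$ and $V$. Every $t$-edge of $\Delta$ lies in a $t$-corridor; since $U$ and $V$ are in cyclically reduced HNN form, no corridor can return to the same boundary component it started from (such a configuration would force a pinch in $U^2$ or $V^2$ after passing to the boundary word, contradicting cyclic reducedness), and no corridor can form a closed annulus inside $\Delta$ (that would violate reducedness, since the two cells abutting it across the innermost such annulus would be mutually inverse). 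Hence every $t$-corridor runs from $\partial_U\Delta$ to $\partial_V\Delta$, and there are finitely many of them, say $n$ in total.

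\textbf{Case $n=0$.} Then $\Delta$ contains no $t$-edges, so $U$ and $V$ are words in $S^{\pm1}$ representing conjugate elements of $G$ and $\Delta$ witnesses this; but an annular diagram with no $t$-corridors is a diagram over the presentation of $G_0$ together with the relations $t^{-1}\a t\b^{-1}$—and since there are no $t$-cells at all, it is in fact a diagram over $G_0$. So $U$ and $V$ are conjugate in $G_0$, which is the first alternative of (i). (The ``conjugate into $A$ resp.\ $B$'' subcase does not arise here; I would fold it into the statement as the degenerate possibility coming from the hypotheses being vacuous—this is why the hypothesis that no power of $\a$ is conjugate to a power of $\b$ in $G_0$ is listed, as it rules out the mixed possibility in the genuinely HNN case below rather than here, and I would streamline the write-up accordingly.) This requires no real work beyond the corridor dichotomy.

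\textbf{Case $n\ge 1$.} Fix a $t$-corridor $\cal{T}$. Cutting $\Delta$ open along $\cal{T}$ produces a disk diagram $\Delta'$ whose boundary reads (after choosing basepoints) $U' (t^{\pm1}) V'^{-1} (t^{\mp1})$, where $U'$ and $V'$ are cyclic permutations of $U$ and $V$ respectively and the two $t$-edges are the two sides of $\cal{T}$; the label $c$ along the $G_0$-side of $\cal{T}$ that $\Delta'$ certifies to be equal to the appropriate power is an element of $A$ (or $B$), say $\a^q$. Reading around $\Delta'$ and using $t^{-1}\a^q t = \b^q$ gives precisely a relation of the form $\a^{-q} U' \a^{q} = V'$ in $G$ (after absorbing the $\b$-side into $V'$ via the same move, or symmetrically), which is alternative (ii). The content here is bookkeeping: tracking which boundary component carries which cyclic permutation, and confirming that the subdiagram on either side of $\cal{T}$ genuinely expresses the label as a power of $\a$ (this is where one uses that $A=\<\a\>$ is infinite cyclic, so that ``a word equal in $G_0$ to an element of $A$'' pins down the exponent $q$ up to the hypothesis that distinct powers of $\b$ are non-conjugate, which guarantees $q$ is well defined and the relation in (ii) is the honest one).

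\textbf{Main obstacle.} The genuinely delicate step is \emph{reducedness bookkeeping}: one must ensure the annular diagram chosen is reduced (minimal number of 2-cells, or at least no cancelling pairs), and then argue that the corridor structure is as claimed—no corridor closes up, no corridor backtracks to its own boundary—and that after cutting along one corridor the residual disk diagram has no further $t$-corridors crossing the cut in a way that would spoil the clean conclusion. In the fully general Collins' Lemma this forces the ``$U$ and $V$ cyclically reduced'' hypothesis to do real work and generates several subcases; in our simplified setting (cyclic associated subgroups, no power of $\a$ conjugate to a power of $\b$, distinct powers of $\b$ non-conjugate) these collapse, but verifying that collapse—i.e.\ that the only surviving outcomes are exactly (i) and (ii)—is the step I would write out most carefully. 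I would lean on the standard reference (\cite{LS}, p.~185, and \cite{CollinsConj}) for the classical statement and present the annular-diagram argument only to the extent needed to see that the simplifying hypotheses kill the extra cases, keeping the exposition brief in the spirit of this section being an outline of the \cite{BrRiSa3} approach.
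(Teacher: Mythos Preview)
Your approach via annular diagrams and $t$-corridors is exactly what the paper intends: the paper does not give a proof but simply says ``a straightforward analysis of annular diagrams yields the following version of Collins' Lemma,'' citing \cite{CollinsConj} and \cite{LS}. So the strategy is right.

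There is, however, a genuine gap. Your claim that ``no corridor can form a closed annulus inside $\Delta$ (that would violate reducedness)'' is false, and the justification offered does not make sense: a single $t$-corridor encircling the hole of the annulus is a perfectly good reduced subdiagram, and there is no pair of mutually inverse cells forced by its existence. Such \emph{annular} corridors arise precisely when $U$ and $V$ contain no $t^{\pm 1}$ but are conjugate in $G$ by an element involving $t$. In that situation, take an innermost annular corridor on the $U$-side: the region between $\partial_U\Delta$ and that corridor is an annular diagram over $G_0$ exhibiting $U$ as conjugate in $G_0$ to the label on that side of the corridor, which is a power of $\alpha$ or of $\beta$; symmetrically for $V$. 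This is exactly the second alternative in (i), which you dismissed as a ``degenerate possibility'' to be folded away. It is not degenerate---it is the content of that clause, and the simplifying hypothesis that no power of $\alpha$ is conjugate in $G_0$ to a power of $\beta$ is what forces the two elements to land in \emph{different} associated subgroups rather than the same one.

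A smaller point: in your Case $n\ge 1$, cutting $\Delta$ open along a corridor does not put $t^{\pm 1}$ on the boundary of the resulting disk. You cut along one \emph{side} of the corridor (an arc in the $1$-skeleton), and that side is labelled $\alpha^q$; the boundary of the disk then reads $U'\,\alpha^q\,(V')^{-1}\,\alpha^{-q}$, giving $\alpha^{-q}U'\alpha^q=V'$ directly. Your conclusion is correct, but the description of the cut should be fixed.
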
 

\subsection{The algorithm for Theorem~\ref{CP of free-by-cyclic}.} 
We regard $H_m$ as an HNN extension of $H_{m-1}$
as described in the second paragraph. 
In the language used above, $G=H_m$ while
$G_0=H_{m-1},\, A=\<s\>,\ B=\<\b\>$ and $t=a_k$,
where $\beta$ is a generator we have added with $\beta=sa_{m-1}^{-1}$ in $H_m$. 

Proceeding by induction,  we may
assume that we have a polynomial time algorithm 
to decide conjugacy in $H_{m-1}$. 
Given two words $u, v$ in the generators of $H_m$ (with $\beta$ included)
we rewrite them into cyclically HNN reduced words $U, V$. This is achieved
by first transforming $u$ and $v$ to 
reduced HNN form by removing pinches and then examining 
cyclic
permutations of $u$ and $v$,
removing any additional pinches that appear. The second step may need to be repeated
several times, but the word is shortened each time. Both steps can be
done in polynomial time without increasing the length of the words.  

We are now able to apply  Collins' Lemma.  
If there are no occurrences of $a_m^{\pm 1}$ in $U'$ and $V'$, 
then we are in case (i) and we apply the algorithm for $H_{m-1}$.
Otherwise we are in case (ii) and we are left to determine if there is an integer $p$
such that
$
s^{-q} U' s^q = V'.
$
(Recall that $A=\<\a\>= \<s\>$.)

In polynomial time, we can rewrite $U'$ and $V'$ into normal
form $\tilde{U}'s^r$ and $\tilde{V}'s^{r'}$,
where the lengths of $\tilde{U}'$ and $\tilde{V}'$ are bounded 
polynomially by $|U|$ and $|V|$. If $r\neq r'$, then
we stop and declare that $U$ is not conjugate to $V$. If $r=r'$,
then we are reduced to deciding if there is a positive integer
$p$ such that $\phi^p(\tilde{U}')=\tilde{V}'$ or $\phi^p(\tilde{V}')=\tilde{U}'$. 
The range of possible $p$ is bounded by a linear function of $|U|+|V|$,
by considerations of growth, as in Section~\ref{distortion section}. And for 
each specific $p$, we can evaluate $\phi^p(U)$ naively  (letter by letter)
and freely reduce to see if it is equal to $V$.
As $\varphi$ has polynomial
growth, these evaluations can be done in polynomial time.

This  algorithm, as we have described it, does not provide the linear upper bound on 
conjugator length that is required for Theorem~\ref{CL of free-by-cyclic}.
The main argument in \cite{BrRiSa3} overcomes this limitation
with an alternative endgame that  makes greater use of the structure of $H_m$ as an iterated HNN extension.

\bibliographystyle{alpha}
\bibliography{bibli}
%\bibliography{$HOME/Dropbox/Bibliographies/bibli}

\ni {Martin R.\ Bridson} \\
Mathematical Institute, Andrew Wiles Building, Oxford OX2 6GG, United Kingdom \\ {bridson@maths.ox.ac.uk}, \
\href{http://www2.maths.ox.ac.uk/~bridson/}{https://people.maths.ox.ac.uk/bridson/}

\ni  {Timothy R.\ Riley} \rule{0mm}{6mm} \\
Department of Mathematics, 310 Malott Hall,  Cornell University, Ithaca, NY 14853, USA \\ {tim.riley@math.cornell.edu}, \
\href{http://www.math.cornell.edu/~riley/}{http://www.math.cornell.edu/$\sim$riley/}

\ni {Andrew W.\ Sale} \rule{0mm}{6mm} \\
Department of Mathematics,
University of Hawaii at Manoa,
Honolulu, HI 96822, USA \\   \href{http://math.hawaii.edu/~andrew/}{http://math.hawaii.edu/$\sim$andrew/}

\end{document}